\DeclareMathAlphabet{\mathpzc}{OT1}{pzc}{m}{it}
\theoremstyle{plain}
\newtheorem{thm}{Theorem}
\newtheorem{cor}[thm]{Corollary}
\newtheorem{lem}[thm]{Lemma}
\newtheorem{prop}[thm]{Proposition}
\newtheorem{definition}[thm]{Definition}
\newtheorem{remark}[thm]{Remark}
\def\bbz{\mathbb{Z}}
\def\bbq{\mathbb{Q}}
\def\bbr{\mathbb{R}}
\def\bba{\mathbb{A}}
\def\bbh{\mathbb{H}}
\def\bbn{\mathbb{N}}
\def\bbg{\mathbb{G}}
\def\bbt{\mathbb{T}}
\def\bbu{\mathbb{U}}
\def\bbp{\mathbb{P}}
\def\tbbg{\tilde{\mathbb{G}}}
\def\gcal{\mathcal{G}}
\def\ucal{\mathcal{U}}
\def\ocal{\mathcal{O}}
\def\ocal{\mathcal{O}}
\def\scal{\mathcal{S}}
\def\dcal{\mathcal{D}}
\def\hcal{\mathcal{H}}
\def\pcal{\mathcal{P}}
\def\tgcal{\tilde{\mathcal{G}}}
\def\pfr{\mathfrak{P}}
\def\pfr{\mathfrak{p}}
\def\f{\mathfrak{f}}
\def\xbf{\mathbf{x}}
\def\vbf{\mathbf{v}}
\DeclareMathOperator\Spec{Spec}
\DeclareMathOperator\Aut{Aut}
\DeclareMathOperator\SL{SL}
\DeclareMathOperator\GL{GL}
\DeclareMathOperator\Lie{Lie}
\DeclareMathOperator\diag{diag}
\DeclareMathOperator\Zcl{Zcl}
\DeclareMathOperator\Hom{Hom}
\def\h{\hspace{1mm}}
\def\vare{\varepsilon}
\begin{document}
\title{Affine Sieve}
\author{Alireza Salehi Golsefidy}
\address{Mathematics Dept, University of California, San Diego, CA 92093-0112}
\email{asalehigolsefidy@ucsd.edu}
\author{Peter Sarnak}
\address{Mathematics Dept, Princeton University, Princeton, NJ 08544-1000}
\email{sarnak@math.princeton.edu}
\thanks{A. S-G. was partially supported by the NSF grant DMS-0635607 and NSF grant DMS-1001598 and P. S. by an NSF grant.}
\subjclass{20G35, 11N35}
\date{9/12/2011}
\begin{abstract}
We establish the main saturation conjecture in \cite{BGS} connected with executing a Brun sieve in the setting of an orbit of a group of affine linear transformations. This is carried out under the condition that the Zariski closure of the group is Levi-semisimple. It is likely that this condition is also necessary for such saturation to hold. 
\end{abstract}
\maketitle
\section{Introduction}
The purpose of this note is to complete the program initiated in \cite{BGS} of developing a Brun combinatorial sieve in the context of a group of affine linear motions. As explained below, this is possible in part thanks to recent developments concerning expansion in graphs which are associated with orbits of such groups. We review briefly the set up in \cite{BGS}. Let $\Gamma$ be a finitely generated group of affine linear motions of $\bbq^n$, that is transformations of the form $\phi:x \mapsto Ax+b$ with $A\in \GL_n(\bbq)$ and $b\in \bbq^n$. It will be convenient for us to realize $\Gamma$ as a subgroup of linear transformations of $\bbq^{n+1}$ by setting 
\[
\phi=\left[\begin{array}{cc}A&b^t\\0&1\end{array} \right].
\] 
Fix $v\in \bbq^n$ and let $\ocal=\Gamma v$ be the orbit of $v$ under $\Gamma$ in $\bbq^n$. Since $\Gamma$ is finitely generated the points of $\ocal$ have coordinates in the ring of  $S$-integer $\bbz_S$ (that is their denominators have all of their prime factors in the finite set $S$). In what follows, we will suppress the behavior of our points at these places in $S$ and we will even extend $S$ to a fixed finite set $S\rq{}$ when convenient. This is done for technical simplicity and an analysis of what happens at these places can probably be examined and controlled, but we will not do so here.

Denote by $\Zcl(\ocal)$ the Zariski closure of $\ocal$ in $\bba^n_{\bbq}$. Let $f\in \bbq[x_1,\ldots,x_n]$ and denote by $V(f)$ its zeros, we will assume henceforth that $\dim(V(f)\cap \Zcl(\ocal))<\dim(\Zcl(\ocal))$, i.e. $f$ is not constantly zero on any of the irreducible components of $\Zcl(\ocal)$.  We seek points $x\in \ocal$ such that $f(x)$ has at most a fixed number of prime factors outside of $S$ (or an enlarged $S\rq{}$). For $m\ge 1$ and $S\rq{}$ fixed (and finite) set
\begin{equation}\label{e:mSprime}
\ocal_{m,S\rq{}}:=\{x\in \ocal|\h f(x)\h\text{has at most $m$ prime factors outside $S\rq{}$}\}.
\end{equation} 
 Thus $\ocal_{1,S\rq{}}\subseteq \ocal_{2,S\rq{}}\subseteq \cdots$, $\ocal=\cup_{m=1}^{\infty} \ocal_{m,S\rq{}}$ and $\cup_{m=1}^{\infty} \Zcl(\ocal_{m,S\rq{}})\subseteq \Zcl(\ocal)$. We say that the pair $(\ocal,f)$ saturates if $\Zcl(\ocal)=\Zcl(\ocal_{r,S\rq{}})$ for some $r<\infty$. In words, this happens if there is a finite set $S\rq{}$ of primes and a finite number $r$ such that the set of points $x\in \ocal$ at which $f(x)$ has at most $r$ prime factors outside of $S\rq{}$ (that is to say at most $r$ prime factors as an $S\rq{}$-integer) is Zariski dense in $\Zcl(\ocal)$.

In \cite{BGS} many classical examples and applications of such saturation (or conjectured saturation) are given. Here we simply point to Brun\rq{}s original work. If $n=1$ and $\bbg=\Zcl(\Gamma)$ contains no tori, then $\ocal$ (if it is infinite) is essentially an arithmetic progression. In this case, the pair $(\ocal,f)$ saturates by Brun\rq{}s results, which assert that there are infinitely many $x\in \bbz$ (infinite is equivalent to Zariski density in $\bba^1$) such that $f(x)$ has at most a fixed number $r=r_f$ prime factors. In this case after Brun, much effort has gone into reducing the number $r$ (for example if $f(x)=x(x+2)$, then $r_f=2$ is equivalent to the twin prime conjecture and $r_f=3$  is known~\cite{Ch}). On the other hand in this one dimensional case if $\bbg=\Zcl(\Gamma)$ is a torus, then it is quite likely that $(\ocal,f)$ does not saturate for certain $f$\rq{}s. For example if $\Gamma=\{2^n|\h b\in \bbz\}$, $v=1$ and $f(x)=(x-1)(x-2)$, then standard heuristic probabilistic arguments (see for example \cite[Page 15]{HW} or \cite{BLMS} for a related conjecture and heuristic argument) suggest that the number of odd distinct prime factors of $(2^m-2)(2^m-1)$ tends to infinity as $m$ goes to infinity. That is $(\ocal,f)$ does not saturate.

This feature persists (see the Appendix) for any group which does not satisfy one of the following equivalent conditions for a group $\bbg=\Zcl(\Gamma)$.
\begin{enumerate}
\item The character group $X(\bbg^{\circ})$ of $\bbg^{\circ}$ is trivial, where $\bbg^{\circ}$ is the connected component of $\bbg$. 
\item No torus is a homomorphic image of $\bbg^{\circ}$.
\item $X(R(\bbg))=1$, where $R(\bbg)$ is the radical of $\bbg$.
\item $\bbg/R_u(\bbg)$ is a semisimple group, where $R_u(\bbg)$ is the unipotent radical of $\bbg$.
\item $\bbg\simeq\bbg_{ss}\ltimes \bbu$, where $\bbg_{ss}$ is a semisimple group and $\bbu$ is a unipotent group. 
\item The Levi factor of $\bbg$ is semisimple.
\end{enumerate}
If $\bbg$ satisfies the above properties, we call it {\it Levi-semisimple}.. We can now state our main result which is a proof of the {\it fundamental saturation theorem} that was conjectured in \cite{BGS}.  
\begin{thm}\label{t:main2}
Let $\Gamma$, $\ocal$ and $f$ be as above and assume that $\bbg=\Zcl(\Gamma)$ is Levi-semisimple, then $(\ocal,f)$ saturates. That is there are a positive integer $r$ and finite set of primes $S'$  such that $\Zcl(\ocal_{r,S\rq{}})=\Zcl(\ocal)$.
\end{thm}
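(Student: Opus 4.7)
The strategy is to set up the combinatorial (Brun) sieve developed in \cite{BGS} and reduce the entire saturation statement to a quantitative equidistribution/expansion input for $\Gamma$ modulo squarefree integers. Concretely, after enlarging $S$ so that $\Gamma\subseteq \GL_{n+1}(\bbz_S)$ and choosing a symmetric finite generating set $\Omega$, the axioms of the combinatorial sieve amount to (i) a counting statement for $|\ocal_N|$, where $\ocal_N$ denotes a ball of word length $N$ in the orbit, and (ii) a level-of-distribution statement
\[
\#\{x\in \ocal_N:\pi_q(x)\in E_q\}=\beta_q\hh |\ocal_N|+ (\text{error}),
\]
for $q$ a squarefree integer coprime to $S$ and $E_q\subseteq \pi_q(\ocal)$, with the error saving a power of $q$ uniformly down to $q\le |\ocal_N|^{\eta}$ for some fixed $\eta>0$. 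Granted such a statement with any positive $\eta$, the standard Brun weights applied to the multiplicative function counting prime factors of $f$ outside $S'$ produce, for $r$ sufficiently large in terms of $\deg f$ and the sieve dimension, infinitely many $x\in\ocal$ with $f(x)$ having at most $r$ prime factors outside $S'$. Varying the word-length ball and exploiting the $\Gamma$-equivariance, one upgrades infinitude to Zariski density in $\Zcl(\ocal)$.

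The plan for (ii) is to deduce it from a spectral gap (expansion) statement for the Cayley graphs $\cay(\pi_q(\Gamma),\pi_q(\Omega))$ as $q$ ranges over squarefree integers with prime factors outside $S$. Indeed, a uniform spectral gap $\lambda_1\ge \varepsilon>0$ for this family, combined with an application of the random walk on $\Gamma$ of length $\asymp \log |\ocal_N|$, converts into exactly the required level-of-distribution bound by the standard comparison between the random walk measure and the uniform (counting) measure on a ball, and by summing over orbits in $\pi_q(\ocal)$. Thus the whole problem collapses to proving the following \emph{super-approximation} statement: there exists $\varepsilon>0$ and an infinite set $\mathcal{Q}$ of squarefree integers (of positive density, in the appropriate sense) such that the Cayley graphs above are $\varepsilon$-expanders for $q\in\mathcal{Q}$.

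The main work therefore lies in establishing super-approximation for a finitely generated subgroup $\Gamma$ of the affine group whose Zariski closure $\bbg$ is Levi-semisimple, extending the known semisimple case. Using the equivalent condition $\bbg\simeq \bbg_{ss}\ltimes\bbu$ with $\bbg_{ss}$ semisimple and $\bbu$ unipotent, the projection $\pi:\bbg\to \bbg_{ss}$ maps $\Gamma$ onto a Zariski dense subgroup of $\bbg_{ss}$ for which expansion mod squarefree $q$ is available (one can first pass to a finite index subgroup of $\Gamma$ so that the image is contained in the connected component and its derived group, reducing to the perfect semisimple case handled by existing super-approximation results). One then has to bootstrap this expansion through the unipotent extension. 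The key observation is that modulo a prime $p$ the group $\bbu(\bbz/p\bbz)$ is a $p$-group of size polynomial in $p$, while $\bbg_{ss}(\bbz/p\bbz)$ has size a power of $p$ of the full dimension and, crucially, no non-trivial quasi-random quotients of small index; one shows, via a Burnside/Gowers-type argument combined with the expansion of the semisimple quotient and a commutator trick using unipotent generators, that any $L^2$ function on $\pi_q(\Gamma)$ orthogonal to constants has a non-trivial push-forward to either the semisimple or (an $\ell^2$-normalised) unipotent layer, both of which then contract under a single step of the random walk. Iterating over squarefree $q$ (using almost-orthogonality of characters between distinct prime levels, as in the semisimple super-approximation proof) yields the uniform spectral gap.

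The hard part is precisely this last step: combining the semisimple expansion with the unipotent layer in a way that is robust under varying $q$ over all squarefrees coprime to $S$. Once super-approximation is in hand, everything else (the combinatorial sieve, the reduction to equidistribution, and passing from infinitely many almost-prime values to Zariski density using the $\Gamma$-action on $\ocal$) is essentially the same machinery already assembled in \cite{BGS}, and the theorem follows.
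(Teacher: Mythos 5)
The proposal correctly identifies the general BGS combinatorial-sieve framework and the role of expansion as the analytic engine, but the central claim -- that one can prove a uniform spectral gap (\emph{super-approximation}) for the family $\cay(\pi_q(\Gamma),\pi_q(\Omega))$ whenever $\bbg=\Zcl(\Gamma)$ is Levi-semisimple -- is false, and this is a fatal gap. Levi-semisimple does not imply perfect: the additive group $\bbg_a$ is Levi-semisimple (its character group is trivial, condition (1) of the list), so $\Gamma=\bbz\subseteq\bbg_a(\bbq)$ falls under Theorem~\ref{t:main2}, yet $\cay(\bbz/q\bbz,\{\pm1\})$ has spectral gap of order $1/q^2$ and the family is emphatically not an expander. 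More generally, whenever $\bbg$ has a nontrivial unipotent abelianization $\bbg/\bbh$ (where $\bbh$ is the perfect core $\bbg^{(\dim\bbg)}$), the quotients $\pi_q(\Gamma)$ map onto finite nilpotent groups of unbounded size with bounded generating sets, and no ``Burnside/Gowers commutator trick'' can manufacture expansion from the semisimple layer: quasi-randomness of $\bbg_{ss}(\f_p)$ gives no control over the characters of the unipotent layer, which are abundant and carry the obstruction. The super-approximation result of \cite{SV} that the paper actually invokes is for \emph{perfect} $\bbg$ generated by unipotents, not for Levi-semisimple $\bbg$.

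The paper avoids this by never running a single sieve on all of $\Gamma$. Instead it splits $\bbg$ as a $\bbq$-variety into the product of its perfect core $\bbh$ and the unipotent quotient $\bbu=\bbg/\bbh$ via a $\bbq$-section, and factors $f$ accordingly as in \eqref{e:Product}. The unipotent layer is sieved by completely elementary means (Theorem~\ref{t:unipotent}): Malcev theory (Lemma~\ref{l:UnipotentZariskiDense}) identifies $\Gamma\cap\bbu$ with a full-rank lattice $\bbz^d$, and a multivariable Brun/Halberstam--Richert sieve (Lemma~\ref{l:MultivariableSieve}) handles the polynomial $P$ together with the coprimality condition on the $P_i$'s; no spectral gap is used or needed. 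Only on the perfect core is the BGS sieve plus the \cite{SV} expansion deployed (Theorem~\ref{t:Perfect}). The subtle extra ingredient is Theorem~\ref{t:Perfect2}, which makes the $r$ and $S$ from the perfect-core sieve \emph{uniform} over all left-translates $L_g$ and all coprime integer combinations $\sum v_i f_i$; this uniformity is precisely what is required to sieve fiber-by-fiber over the Zariski-dense set of good points produced in the unipotent layer. Establishing that uniformity requires bounding the ramified primes uniformly in $\vbf$ and $g$ (Proposition~\ref{p:Arithmetic}), which uses N\"oether normalization and a height bound on the monic equation satisfied by $\sum v_if_i$ (Lemma~\ref{l:DegC}) -- none of which appears in your outline. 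You would need to replace the super-approximation step with this two-stage, fiberwise decomposition; as written, your argument cannot get off the ground for any non-perfect $\bbg$.
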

\begin{remark}
\begin{enumerate}
\item The condition on $\bbg$ which is quite mild and easily checked in examples, is probably necessary for saturation (in particular it is needed in executing a Brun like sieve), when considering all pairs $(\ocal,f)$ for which $\Zcl(\Gamma)=\bbg$. We discuss the heuristics leading to this belief in the Appendix. (It is worth emphasizing however that we have no example of a pair $(\ocal,f)$ for which we can prove does not saturate!) These heuristics indicate what we expect is the case, that the condition on $\bbg$ in the theorem is the exact one that leads to saturation. 

\item The proof of the Theorem~\ref{t:main2} is effective in the sense that given a pair $(\ocal,f)$, there is an algorithm which will terminate with a value $r$ and the set $S\rq{}$. However without imposing strong conditions on $\Gamma$ (such as it being a lattice in the corresponding group $G_S$, as is done in \cite{NS}) the bounds for $r$ that would emerge from our proof would be absurdly large and very far from the minimal $r$ (called the saturation number in \cite{BGS}).
\end{enumerate}
\end{remark}

We outline the proof of Theorem~\ref{t:main2}. We start by pulling back $f$ to a regular function on $\bbg$ and reformulate Theorem~\ref{t:main2} to the following form. 

\begin{thm}\label{t:Main}
Let $\Gamma$ be a finitely generated subgroup of $\SL_n(\bbq)$. Let $\bbg$ be the Zariski closure 
of $\Gamma$ in $(\mathbb{SL}_n)_{\bbq}$ and $f\in \bbq[\bbg]$ which is not constantly zero on any of 
the irreducible components of $\bbg$. If $\bbg$ is Levi-semisimple, then there are a positive integer $r$ and a finite set $S$ of primes such that 
\begin{equation}\label{e:PrimeGamma}
\Gamma_{r,S}(f):=\{\gamma\in \Gamma|\h f(\gamma)\h\text{has at most $r$ prime factors outside $S$}\}
\end{equation}
is Zariski dense in $\bbg$.
\end{thm}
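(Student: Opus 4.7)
The plan is to run a Brun-type combinatorial sieve directly on $\Gamma$, with reductions $\pi_q\colon \Gamma\to \SL_n(\bbz/q\bbz)$ for squarefree $q$ serving as the congruence data. After passing to a finite-index subgroup and enlarging $S$ finitely, I may assume $\bbg$ is connected, $\Gamma\subset \SL_n(\bbz_S)$, and $f\in \bbz_S[\bbg]$; saturation for such a subgroup suffices by a routine coset argument. Fix a symmetric generating set $\Omega\ni 1$ of $\Gamma$ and write $\Omega^T$ for the word ball of radius $T$.

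Three ingredients drive the sieve. First, \emph{strong approximation}: for all but finitely many primes $p$, $\pi_p(\Gamma)$ has uniformly bounded index in $\bbg(\bbf_p)$, and the reductions decouple across distinct primes. Using the decomposition $\bbg\simeq \bbg_{ss}\ltimes \bbu$ afforded by the Levi-semisimple assumption, this follows by combining the Matthews-Vaserstein-Weisfeiler / Nori theorem on the semisimple factor with the elementary analysis of a unipotent group via the exponential. Second, \emph{super-strong approximation}: the reduced Cayley graphs $\cay(\pi_q(\Gamma),\pi_q(\Omega))$ form an expander family as $q$ ranges over squarefree moduli with prime factors avoiding $S$. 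In the semisimple case this is the Bourgain-Gamburd-Varjú-Salehi Golsefidy theorem; in the general Levi-semisimple setting one propagates the gap through the extension by the unipotent radical, which is possible exactly because $\bbg^{\circ}$ is Levi-semisimple (equivalence~(4)). The resulting spectral gap gives the equidistribution
\[
\#\{g\in\Omega^T : \pi_q(g)=x_0\} = \frac{|\Omega^T|}{|\pi_q(\Gamma)|}\bigl(1+O(e^{-cT})\bigr)
\]
uniformly in $x_0\in\pi_q(\Gamma)$ and in $q\le e^{cT}$. Third, \emph{local densities}: since $f$ is not identically zero on any irreducible component of $\bbg$, Lang-Weil applied to $\bbg$ and to $V(f)\cap\bbg$ gives
\[
\beta(p) := \frac{|\{x\in\bbg(\bbf_p) : f(x)=0\}|}{|\bbg(\bbf_p)|} = O(p^{-1}),
\]
and $\beta$ is multiplicative over squarefree arguments.

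With these in hand, the sifting sequence indexed by $\Omega^T$ and weighted by the $f$-divisibility conditions modulo squarefree $q$ satisfies the axioms of the fundamental lemma of the combinatorial sieve, with density $\beta$ and remainders absorbed by the expander equidistribution. Taking the sifting level $z=|\Omega^T|^{\eta}$ for a small fixed $\eta>0$ produces the lower bound
\[
\#\bigl\{g\in\Omega^T : \gcd\bigl(f(g),\prod\nolimits_{p\le z,\, p\notin S}p\bigr)=1\bigr\} \gg \frac{|\Omega^T|}{\log z}.
\]
Since $|f(g)|$ grows at most polynomially in $T$ on $\Omega^T$ while $\log z\asymp T$, every $g$ counted above has at most $r=O(1/\eta)$ prime factors outside $S$. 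Finally, the Eskin-Mozes-Oh escape-from-subvarieties estimate gives $|\Omega^T\cap W|\ll |\Omega^T|^{1-\delta}$ for any proper subvariety $W\subsetneq \bbg$ and some $\delta>0$, which is dwarfed by the above lower bound, so $\Gamma_{r,S}(f)$ is Zariski dense in $\bbg$.

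The principal obstacle is super-strong approximation in the Levi-semisimple (rather than purely semisimple) setting: one must control, uniformly in $q$, the interaction between the semisimple quotient and the unipotent radical via the adjoint action, so that the expander gap for $\bbg_{ss}$ propagates through the semidirect product $\bbg_{ss}\ltimes\bbu$. This is the technical heart of the argument and is the precise place where the Levi-semisimple hypothesis is essential --- a torus quotient would obstruct expansion and produce the kind of non-saturation phenomenon illustrated by $\Gamma=\{2^n\}$ acting on $\bbq$ in the introduction.
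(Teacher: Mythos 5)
There is a genuine gap at the heart of your sieve: the super-strong approximation claim for the full Levi-semisimple group is false. Expansion does \emph{not} propagate through the extension by the unipotent radical. Take $\bbg = \SL_2\times \bbg_a$, which is Levi-semisimple but not perfect. Then $\pi_q(\Gamma)$ surjects onto $\bbz/q\bbz$, and the Cayley graphs of the abelian quotients $\bbz/q\bbz$ with a bounded generating set are never an expander family as $q\to\infty$. So $\cay(\pi_q(\Gamma),\pi_q(\Omega))$ cannot form an expander family whenever $\bbg$ has a nontrivial abelianization, which happens exactly when $\bbg$ is not perfect. The result of Salehi Golsefidy--Varj\'{u} that you invoke really does require the Zariski closure to be \emph{perfect}, not merely Levi-semisimple, and ``propagating the gap through the unipotent radical'' is precisely the step that cannot be done. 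Consequently the single global sieve on $\Omega^T$ with level $z=|\Omega^T|^\eta$ does not have the remainder control you claim.

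The paper gets around this obstruction by \emph{not} running one sieve on all of $\Gamma$. It locates the perfect core $\bbh=\bbg^{(\dim\bbg)}$, for which $\bbg/\bbh$ is unipotent, and identifies $\bbg$ with $\bbu\times\bbh$ as a $\bbq$-variety via a section. After writing $f(g)=\sum_i Q_i(\pi(g))\,L_{\phi(\pi(g))}(a_i)(g)$ it runs \emph{two} separate sieves: an elementary multivariable polynomial sieve on the unipotent quotient (Theorem~\ref{t:unipotent}, proved via Malcev theory and logarithm, reducing to a sieve on $\bbz^d$ --- no expanders are used or available here), and the expander-based BGS-type sieve only on the perfect fiber $\bbh$ (Theorems~\ref{t:Perfect} and~\ref{t:Perfect2}), where the spectral gap of \cite{SV} is actually applicable. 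The fiberwise application also forces the extra uniformity statement Theorem~\ref{t:Perfect2}, controlling $r$ and the ramified set simultaneously over all coprime integer combinations $\sum_i v_i a_i$ and over left translates $L_g$; your proposal never confronts this because it assumes a single global sieve, but that is exactly what is unavailable. Your local-density and escape-from-subvarieties steps are fine in spirit, but without expansion for $\pi_q(\Gamma)$ they have nothing to hang on, so the argument as written does not go through.
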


To prove Theorem~\ref{t:Main}, first we find a perfect normal subgroup $\bbh$ of $\bbg$ such that $\Gamma\cap\bbh$ is Zariski-dense in $\bbh$ and $\bbg/\bbh$ is a unipotent group.  Let $\pi$ denote the the projection map $\pi:\bbg\rightarrow \bbg/\bbh$. Since $\bbg/\bbh$ is a $\bbq$-unipotent group, there is a $\bbq$-section $\phi:\bbg/\bbh\rightarrow \bbg$ and, as a $\bbq$-variety, $\bbg$ can be identified with the product of $\bbh$ and $\bbu$ (see Section~\ref{s:General} for more details). Thus there are polynomials $p$ and $p_i\in \bbq[\bbu]$ and regular functions $f_i\in\bbq[\bbh]$ such that $\gcd(p_i)=1$ and
\begin{equation}\label{e:Product}
f(g)=p(\pi(g))\cdot \left[\sum_i p_i(\pi(g))\ f_i(\phi(\pi(g))^{-1}g)\right]=p(\pi(g))\cdot \left[\sum_i p_i(\pi(g))L_{\phi(\pi(g))}(f_i)(g)\right],
\end{equation}
where $L_g:\bbq[\bbg]\rightarrow\bbq[\bbg]$ is the left multiplication operator, i.e. $L_g(f)(g')=f(g^{-1}g')$.

In the second step, we prove the following stronger version of Theorem~\ref{t:Main} for a unipotent group to get a control on the values of $p$ and $p_i$'s. 

\begin{thm}~\label{t:unipotent}
Let $\bbu$ be a unipotent $\bbq$-group. Let $\Gamma$ be a finitely generated, Zariski dense subgroup of $\bbu(\bbq)$, and $p, p_1,\ldots,p_m\in\bbq[\bbu]$ such that $\gcd(p_i)=1$. Then there are a finite set $S$ of primes and a positive integer $r$ such that, 
\[
\{\gamma\in\Gamma\h|\h p(\gamma)\h\text{has at most $r$ prime factors in $\bbz_S$ and $\gcd(p_i(\gamma))$ is a unit in $\bbz_S$}\}
\]
is Zariski dense in $\bbu$.
\end{thm}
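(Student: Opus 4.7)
My strategy is to transport the problem to $\bba^d$ via Malcev coordinates and then execute a multi-dimensional Brun combinatorial sieve, with the hypothesis $\gcd(p_i)=1$ playing the decisive role of forcing the common zero locus of the $P_i$'s to have codimension $\ge 2$, which is exactly what allows a \emph{uniform} finite $S$ for the divisibility condition.

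First I would pass to a finite-index (hence still Zariski-dense) subgroup $\Gamma_0\le\Gamma$ admitting a Malcev basis $(\gamma_1,\ldots,\gamma_d)$, where $d=\dim\bbu$. Since $\bbu$ is connected unipotent, the exponential map realizes $\bbu$ as $\bbq$-isomorphic as a variety to $\bba^d$, and $\psi\colon\bba^d\xrightarrow{\sim}\bbu$, $\psi(n_1,\ldots,n_d)=\gamma_1^{n_1}\cdots\gamma_d^{n_d}$, is a polynomial $\bbq$-isomorphism of varieties with $\psi(\bbz^d)=\Gamma_0$. Pulling back along the ring isomorphism $\psi^*\colon\bbq[\bbu]\to\bbq[x_1,\ldots,x_d]$ and enlarging $S$ to absorb denominators, the statement reduces to: for $P,P_1,\ldots,P_m\in\bbz_S[x_1,\ldots,x_d]$ with $\gcd(P_i)=1$, the set
\[
\bigl\{n\in\bbz^d\ :\ P(n)\text{ has at most }r\text{ prime factors outside }S,\ \gcd(P_1(n),\ldots,P_m(n))\in\bbz_S^\times\bigr\}
\]
is Zariski dense in $\bba^d$ for some uniform $r$ and $S$.

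Since $\bbq[x_1,\ldots,x_d]$ is a UFD, the condition $\gcd(P_i)=1$ implies that no irreducible polynomial divides every $P_i$; by the Nullstellensatz the common zero locus $V:=V(P_1,\ldots,P_m)$ has codimension $\ge 2$ in $\bba^d$. Consequently, enlarging $S$ further to absorb finitely many exceptional primes, one has $\sigma(\ell):=\#V(\bbf_\ell)=O(\ell^{d-2})$ for every $\ell\notin S$, so $\sum_\ell\sigma(\ell)/\ell^d<\infty$. I would now run Brun's combinatorial sieve on $[-N,N]^d$ over $\pcal:=\{\ell\notin S:\ell\le N^{1/s}\}$, removing $n$ such that some $\ell\in\pcal$ divides either $P(n)$ or $\gcd(P_1(n),\ldots,P_m(n))$. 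With $\rho(\ell):=\#\{n\bmod\ell:P(n)\equiv 0\}$, the combined local density $(\rho(\ell)+\sigma(\ell))/\ell^d=O(1/\ell)$ is of the classical type, so Brun's fundamental lemma delivers $\gg N^d/\log N$ surviving $n$. Each survivor has all prime factors of $P(n)$ either in $S$ or exceeding $N^{1/s}$, so $P(n)$ has at most $r:=s\deg P$ prime factors outside $S$; and no prime in $(\max S,N^{1/s}]$ divides $\gcd(P_1(n),\ldots,P_m(n))$. The remaining contribution from primes $\ell>N^{1/s}$ dividing $\gcd(P_1(n),\ldots,P_m(n))$ is at most $\sum_{\ell>N^{1/s}}O(N^d/\ell^2)+O(N^{d-2})=O(N^{d-1/s})$ by the codimension-$\ge 2$ bound, negligible against the main term. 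Since any proper subvariety of $\bba^d$ contains only $O(N^{d-1})$ integer points in $[-N,N]^d$, the positive-density lower bound forces Zariski density, and pushing forward by $\psi$ gives the theorem.

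The main obstacle is orchestrating the sieve so that both conditions, few prime factors of $P(n)$ \emph{and} $S$-unit behavior of $\gcd(P_1(n),\ldots,P_m(n))$, hold with a uniform $S$ independent of $n$. The decisive analytic input is the summability $\sum_\ell\sigma(\ell)/\ell^d<\infty$ coming from the codimension-$\ge 2$ bound, i.e.\ precisely the content of $\gcd(P_i)=1$. Without this bound one would face the divergent $\sum 1/\ell$ and be forced to let $S$ grow with $n$, which would not yield the theorem as stated.
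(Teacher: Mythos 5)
Your approach is genuinely different from the paper's. The paper proves the key sieving lemma (Lemma~\ref{l:MultivariableSieve}) by induction on the number of variables, running the one-variable sieve at each step and handling the gcd condition through a Bezout identity: for coprime $P_{ij}\in\bbz[x]$ one has $\sum_j Q_{ij}P_{ij}=m_i\in\bbz$, so $\gcd_j P_{ij}(x)$ divides the fixed integer $m_i$ for every $x$; the induction step produces an analogous identity $\sum_j Q_{ij}P_{ij}=Q_i\in\bbz[x_1,\ldots,x_{d-1}]$ after writing everything as a polynomial in the last variable. You instead transport to $\bbz^d$ via Malcev coordinates (the paper uses exponential coordinates via Lemma~\ref{l:UnipotentZariskiDense}, which is equivalent for this purpose) and run a single $d$-dimensional Brun sieve, using the codimension-$\ge 2$ bound $\sigma(\ell)=O(\ell^{d-2})$ on $V(P_1,\ldots,P_m)$ to make the gcd density summable.

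There is a genuine gap, however, in your treatment of primes $\ell>N^{1/s}$ dividing $\gcd_i P_i(n)$. The claimed estimate $\sum_{\ell>N^{1/s}}O(N^d/\ell^2)+O(N^{d-2})$ rests implicitly on the per-prime bound $\#\{n\in[-N,N]^d:\ell\mid\gcd_i P_i(n)\}=O(N^d/\ell^2)$, but equidistribution only gives this for $\ell\le N$. For $\ell>N$ the box has side length smaller than $\ell$, and the codimension bound alone does not control how many of the $\sim\ell^{d-2}\gg N^{d-2}$ points of $V(\f_\ell)$ lift into $[-N,N]^d$; fixing $(y_1,y_2)$ with $P_i(n)=\ell y_i$ and summing $O(N^{d-2})$ over the $O((N^A/\ell)^2)$ pairs, or summing a crude $O(N^{d-2})$ per prime over $\ell\in(N,N^A]$, both give contributions that are not $o(N^d/(\log N)^\kappa)$. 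So the codimension-$\ge 2$ observation, which you call ``precisely the content of $\gcd(P_i)=1$,'' is not by itself enough.

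The repair requires exactly the Bezout/resultant input the paper's induction is built around. Since $\gcd_i P_i=1$, a Bezout identity in $x_d$ over $\bbq(x_1,\ldots,x_{d-1})$ gives a nonzero $Q\in\bbz[x_1,\ldots,x_{d-1}]$ with $\gcd_i P_i(n)\mid Q(n_1,\ldots,n_{d-1})$; then $|Q(\bar n)|\ll N^B$ implies each nonzero value $Q(\bar n)$ has $O(B)$ prime factors exceeding $N$, whence $\sum_{\ell>N}\#\{n:\ell\mid\gcd_i P_i(n),\ Q(\bar n)\ne 0\}=O(N^{d-1})$, and the $\bar n$ on the hypersurface $Q=0$ contribute at most $O(N^{d-1})$ as well. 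With that addition, and after correcting $\gg N^d/\log N$ to $\gg N^d/(\log N)^\kappa$ (the sieve dimension $\kappa$ depends on the factorization of $P$, not just on its degree being $\ge 1$), your plan does yield a correct proof, but it is no longer a sieve argument resting on codimension alone: the resultant bound is doing the same job it does in the paper's inductive scheme, just applied in one shot rather than one variable at a time.
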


The major inputs in the proof of Theorem~\ref{t:unipotent} are Malcev theory of lattices in Nilpotent Lie groups and Brun's combinatorial sieve. 

Using Theorem~\ref{t:unipotent}, in order to show Theorem~\ref{t:Main}, one needs to prove its stronger form for perfect groups which also provides a uniform control on $r$ and $S$ for all the coprime linear combinations of a finite set of regular functions $f_i$'s. We get such a control in two steps. Before stating the precise formulation of our results, let us briefly recall parts of Nori's results from~\cite{Nor} and introduce a few notations. 

As we said earlier, $\Gamma\subseteq \SL_n(\bbz_{S_0})$ for some finite set of primes $S_0$. Let $\gcal$ be the Zariski-closure of $\Gamma$ in $(\mathbb{SL}_n)_{\bbz_{S_0}}$. It is worth mentioning that $\bbg$ is just the generic fiber of $\gcal$. If $\bbg$ is generated by its 1-parameter unipotent subgroups, then, by~\cite{Nor}, there is a finite set $S_0\subseteq S_{\Gamma}$ of primes  such that
\begin{enumerate}
\item  The projection map $\gcal \times \Spec(\bbz_{S_{\Gamma}})\rightarrow \Spec(\bbz_{S_{\Gamma}})$ is smooth.
\item All the fibers of the projection map $\gcal \times \Spec(\bbz_{S_{\Gamma}})\rightarrow \Spec(\bbz_{S_{\Gamma}})$ are geometrically irreducible and have the same dimension.
\item $\pi_p(\Gamma)=\gcal_p(\f_p)$ for any $p$ outside of $S_{\Gamma}$, where $\pi_p:\Gamma\rightarrow\SL_n(\f_p)$ is the homomorphism induced by the residue map $\pi_p:\bbz\rightarrow\f_p$ and $\gcal_p=\gcal\times \Spec(\f_p)$.
\item $\prod_{p\not\in S_{\Gamma}}\gcal(\bbz_p)$ is a subgroup of the closure of $\Gamma$ in $\prod_{p\not\in S_0}\SL_n(\bbz_p)$ (where $\bbz_p$ is the ring of $p$-adic integers).
\end{enumerate}

For a given $f\in \bbq[\bbg]$, there is a finite set $S_{\Gamma}\subseteq S$ of primes such that $f\in \bbz_S[\gcal]$ (we take the smallest such set). We say that $p$ is a ramified prime with respect to $\Gamma$ and $f$, if  $f(\gamma)\in p\bbz_{S}$ for any $\gamma\in\Gamma$. We denote the set of all the ramified primes with respect to $\Gamma$ and $f$ by $S_{\Gamma,f}$. 

We also note that $f\in \bbq[\bbg]$ can be lifted to a regular function $\tilde{f}$ on all the $n\times n$ matrices (we pick one of such lifts with smallest possible degree). 

\begin{thm}\label{t:Perfect}
In the above setting, if $\bbg$ is perfect and generated by its unipotent subgroups, then there is a positive integer $r$ depending on $\Gamma$, the degree of $\tilde{f}$, a lift of $f$ to $\bba^{N^2}$, and $\#S_{\Gamma,f}$ such that $\Gamma_{r,S_{\Gamma,f}}(f)$ is Zariski dense in $\bbg$. 
\end{thm}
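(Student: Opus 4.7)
The plan is to reformulate the desired Zariski density as a lower mass bound for a random walk on $\Gamma$, and then execute Brun's combinatorial sieve on $\Gamma$ with the level-of-distribution condition coming from super-strong approximation and the local density condition from a Lang--Weil count on $\gcal_p$. Fix a finite symmetric generating set $\Omega$ of $\Gamma$ with $1\in\Omega$, and let $\mu_N$ denote the $N$-fold convolution of the uniform measure on $\Omega$. For squarefree $d$ with all prime factors outside $S:=S_{\Gamma}\cup S_{\Gamma,f}$, set
\[
g(d)=\frac{\#\{x\in\gcal(\bbz/d\bbz):f(x)\equiv 0\bmod d\}}{\#\gcal(\bbz/d\bbz)},\qquad R_N(d)=\mu_N\bigl(\{\gamma:d\mid f(\gamma)\}\bigr)-g(d).
\]
Multiplicativity of $g$ follows from Nori strong approximation (items (3)--(4) above) combined with the Chinese remainder theorem. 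For each prime $p\notin S$, the subscheme cut out by $f$ in the geometrically irreducible fiber $\gcal_p$ has codimension one, so applying the Lang--Weil estimate to the lift $\tilde f$ on $\bba^{n^2}$ yields $g(p)\le K/p$, with $K$ depending only on $\deg(\tilde f)$ and on $\gcal$.

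For the error term I would invoke super-strong approximation: since $\bbg$ is perfect and generated by its one-parameter unipotent subgroups, the Cayley graphs of $\gcal(\bbz/q\bbz)$ with respect to $\pi_q(\Omega)$ form an expander family as $q$ runs over squarefree integers coprime to $S_{\Gamma}$ (by the work of Bourgain--Gamburd--Sarnak and Salehi Golsefidy--Varj\'u). This translates to an equidistribution bound
\[
\Bigl|\mu_N\bigl(\{\gamma:\pi_q(\gamma)=x\}\bigr)-\tfrac{1}{\#\gcal(\bbz/q\bbz)}\Bigr|\le q^A e^{-\beta N}
\]
uniformly in $x\in\gcal(\bbz/q\bbz)$, with $\beta>0$ absolute and $A$ depending only on $\dim\bbg$. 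Summing over the $O(q^{\dim\bbg-1})$ residues $x$ with $f(x)\equiv 0\bmod q$ gives $|R_N(q)|\le q^B e^{-\beta N}$, and choosing the sieve level $D=e^{\delta N}$ with $\delta<\beta/(2B)$ turns this into a genuine level of distribution for the sieve.

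With the density bound and the level of distribution in place, Brun's combinatorial sieve in its lower-bound form produces an integer $r=r(K,\#S_{\Gamma,f})$ and a constant $c'>0$ such that
\[
\mu_N\bigl(\Gamma_{r,S_{\Gamma,f}}(f)\bigr)\ge c\prod_{p\le D,\,p\notin S}(1-g(p))\ge c'>0
\]
for all sufficiently large $N$. Combined with the fact that $\mu_N(W\cap\Gamma)\to 0$ for any proper Zariski-closed $W\subsetneq\bbg$ (another consequence of the spectral gap together with a point count of $W(\f_p)$), this forces $\Gamma_{r,S_{\Gamma,f}}(f)$ to be Zariski dense in $\bbg$, with $r$ controlled by the claimed data.

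The principal obstacle is the level-of-distribution bound for \emph{squarefree} moduli. A bound only at prime moduli, which is essentially what Nori alone provides, would be useless inside Brun's sieve; one genuinely needs the uniform spectral gap of super-strong approximation, which in the generality of perfect groups generated by unipotents is a deep input. A secondary point to be careful about is the dependence of the Lang--Weil constants on $\gcal$ itself rather than only on $\deg(\tilde f)$, but this is absorbed into the permitted $\Gamma$-dependence of $r$.
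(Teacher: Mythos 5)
Your overall strategy coincides with the paper's: density from a Lang--Weil type point count on $V(f)(\f_p)$, level of distribution from the spectral gap of Salehi Golsefidy--Varj\'u, and a Brun combinatorial sieve, finished by beating the contribution of any proper subvariety. The paper works with word-metric balls in a free Zariski-dense subgroup rather than with convolution powers $\mu_N$, and it derives the precise sieve dimension $t_f$ via Chebotarev after bounding the number of geometric components of $V(f)$ by Bezout, whereas you only record the one-sided bound $g(p)\le K/p$; both are acceptable for a fundamental-lemma-type sieve, so these are cosmetic differences.

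However, two steps in your write-up are actually incorrect or missing, and both matter. First, the asserted lower bound $\mu_N\bigl(\Gamma_{r,S_{\Gamma,f}}(f)\bigr)\ge c\prod_{p\le D,\,p\notin S}(1-g(p))\ge c'>0$ is false: with $D=e^{\delta N}$ and $g(p)\asymp t_f/p$, the singular product $\prod_{p\le D}(1-g(p))$ decays like $(\log D)^{-t_f}\asymp N^{-t_f}$, so the sieved mass tends to $0$ rather than staying bounded below. The argument still works, but only because the proper-subvariety mass decays \emph{exponentially} in $N$ (again via the spectral gap together with the point count $\#W(\f_p)\ll p^{\dim\bbg-1}$); writing merely ``$\mu_N(W\cap\Gamma)\to 0$'' is not enough once your own lower bound is $N^{-t_f}$, and the rate needs to be made explicit, as in \cite[Prop.~3.2]{BGS}.

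Second, Brun's lower-bound sieve does not by itself output an integer $r$; it outputs a lower bound on the mass of $\{\gamma: \gcd(f_\Gamma(\gamma), \prod_{p\le z, p\notin S}p)=1\}$, i.e.\ on the set of $\gamma$ for which $f$ has no small prime factor. To pass to a bound on the \emph{number} of prime factors one must combine this with the a priori estimate $|f_\Gamma(\gamma)|\le C_f\cdot C^{O(N)}$ valid on the support of $\mu_N$ (equivalently, on a word-ball of radius $N$): since every prime factor exceeds $z=e^{\delta N/s}$, the number of such factors is at most $O(N\log C/\log z)=O(s\log C/\delta)$, a constant, and this constant is where the dependence on $\Gamma$ (through $C$, $s$, $\delta$), on $\deg\tilde f$, and on $\#S_{\Gamma,f}$ actually enters. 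This deduction, carried out in the paper in Corollary~\ref{c:AlmostPrime} via (\ref{e:SNorm}), is absent from your proposal and cannot be omitted: without it, the conclusion that some fixed $r$ works does not follow from the sieve estimate alone.
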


To establish Theorem~\ref{t:Perfect}, we carefully follow the treatment given in the work of Bourgain, Gamburd and Sarnak~\cite{BGS} and combine it with a recent result of Salehi Golsefidy and Varj\'{u}~\cite{SV}\footnote{This work relies in part on a number of recent developments (\cite{H}, \cite{BG}, \cite{BGT},\cite{PS}, \cite{V})}. Theorem~\ref{t:Perfect} enables us to get a fixed $r$ that works for all the linear combinations of a given finite set of regular functions $f_i$'s, as soon as we have a uniform control on the set of associated ramified primes. In the second step, we get a uniform upper bound on the ramified primes with respect to $\Gamma$ and all the coprime linear combinations of $f_i$'s.

\begin{thm}\label{t:Perfect2}
In the above setting, let $\bbg$ be Zariski-connected and perfect. Then for any finite set of primes $S'$ and any given $f_1, \ldots, f_m\in \bbq[\bbg]$ which are linearly independent over $\bbq$, there are a positive integer $r$ and a finite set $S$ of primes such that $\Gamma_{r,S}(f_{\vbf,g})$ is Zariski dense in $\bbg$ 
 for any primitive integer vector $\vbf=(v_1,\ldots,v_m)$ and any $g\in \gcal(\bbz_{S'})$, where $f_{\vbf,g}=L_g(\sum_{i=1}^m v_i f_i)$.
\end{thm}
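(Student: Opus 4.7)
The plan is to deduce Theorem~\ref{t:Perfect2} from Theorem~\ref{t:Perfect} by establishing two kinds of uniformity for the family $\{f_{\vbf,g}\}$: a uniform upper bound on the set $S_{\Gamma,f_{\vbf,g}}$ of ramified primes, and a uniform upper bound on the degree of a lift of $f_{\vbf,g}$ to $\bba^{N^2}$. Since $\bbg$ is Zariski-connected and perfect, its Levi factor $\bbg/R_u(\bbg)$ is perfect and reductive; a connected central torus would contribute to the abelianization, so the Levi factor is in fact semisimple and $\bbg$ is generated by its unipotent one-parameter subgroups. Thus Theorem~\ref{t:Perfect} applies, and once both uniformities are in place, a single integer $r$ from that theorem works for the entire family.

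For the uniform control of ramified primes, first enlarge $S$ so that $f_1,\ldots,f_m\in\bbz_S[\gcal]$ and $S\supseteq S'\cup S_\Gamma$. Using the $\bbq$-linear independence of the $f_i$ on $\bbg$ together with the Zariski density of $\Gamma$ in $\bbg$, choose $\gamma_1,\ldots,\gamma_m\in\Gamma$ such that the matrix $M:=(f_i(\gamma_j))_{i,j}\in M_m(\bbq)$ has nonzero determinant $D$, and enlarge $S$ once more to contain the primes dividing $D$. For $p\notin S$ and any primitive $\vbf$, the vector $M\vbf$ cannot be entirely divisible by $p$: multiplying by the adjugate of $M$ would force $p\mid D\,v_i$ for every $i$, hence $p\mid v_i$ for every $i$, contradicting primitivity. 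Thus some $f_{\vbf}(\gamma_j)\not\equiv 0\pmod{p}$. To transfer this witness to $f_{\vbf,g}=L_g(f_{\vbf})$, invoke Nori's surjectivity $\pi_p(\Gamma)=\gcal_p(\bbf_p)$ for $p\notin S_\Gamma$: the reduction $\bar g\in\gcal_p(\bbf_p)$ is well-defined since $g\in\gcal(\bbz_{S'})$ and $p\notin S'$, so there is $\gamma\in\Gamma$ with $\pi_p(\gamma)=\bar g\cdot \pi_p(\gamma_j)$, and then
\[
\overline{f_{\vbf,g}(\gamma)}=\overline{f_{\vbf}(g^{-1}\gamma)}=\overline{f_{\vbf}(\gamma_j)}\neq 0 \quad\text{in } \bbf_p.
\]
Consequently $S_{\Gamma,f_{\vbf,g}}\subseteq S$ uniformly in $\vbf$ and $g$.

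For the degree bound, fix lifts $\tilde f_1,\ldots,\tilde f_m\in\bbz_S[x_{ij}]$ of the $f_i$ to $\bba^{N^2}$. For $g\in\gcal(\bbz_{S'})\subseteq\SL_n(\bbz_{S\cup S'})$, the entries of $g^{-1}$ lie in $\bbz_{S\cup S'}$, so $X\mapsto g^{-1}X$ is a linear change of the matrix variables and
\[
\widetilde{f_{\vbf,g}}(X):=\sum_{i=1}^{m} v_i\,\tilde f_i(g^{-1}X)
\]
is a lift of $f_{\vbf,g}$ of degree at most $\max_i\deg\tilde f_i$, independently of $(\vbf,g)$. Since $\sum v_i f_i\ne 0$ in $\bbq[\bbg]$ for $\vbf\ne 0$ and left translation is a bijection on $\bbq[\bbg]$, the function $f_{\vbf,g}$ is not identically zero on the Zariski-connected group $\bbg$. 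Theorem~\ref{t:Perfect} then furnishes a single $r$, determined by $\Gamma$, $\max_i\deg\tilde f_i$, and $\#S$, such that $\Gamma_{r,S}(f_{\vbf,g})$ is Zariski dense in $\bbg$ for every primitive $\vbf$ and every $g\in\gcal(\bbz_{S'})$. The main technical point I anticipate is the uniform bound on the ramified primes, and in particular using Nori's surjectivity to transport the nonvanishing witness $\gamma_j$ to a $\Gamma$-point adapted to the arbitrary translate $g$.
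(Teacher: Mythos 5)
Your reduction of the uniform ramification bound to a determinant argument is clean and, I think, a genuine simplification of the paper's route. The paper instead invokes Noether normalization (Lemma~\ref{l:Zsfree}) to write $\bbz_S[\gcal]$ as a finite extension of a polynomial ring, bounds heights of monic equations for integer combinations of the $f_i$ (Lemma~\ref{l:DegC}), and then compares point counts of hypersurfaces mod $p$ via~\cite{Sch} to get a contradiction for large $p$ — a substantially heavier commutative-algebra argument than fixing $\gamma_1,\ldots,\gamma_m\in\Gamma$ with $\det\bigl(f_i(\gamma_j)\bigr)\neq 0$ and using the adjugate to contradict primitivity of $\vbf$. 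Your transfer from $f_{\vbf}$ to $f_{\vbf,g}$ via a $\Gamma$-point congruent to $\bar g\,\pi_p(\gamma_j)$ is also correct modulo the issue below, and the degree bound matches the paper's Lemma~\ref{l:UpperBoundDegree}.

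However, there is a genuine gap at the very start. You claim that $\bbg$ connected and perfect implies $\bbg$ is generated by its unipotent one-parameter subgroups, and hence that Theorem~\ref{t:Perfect} applies directly. Over $\overline{\bbq}$ the Levi factor of a connected perfect group is indeed semisimple and generated by its root subgroups, so the literal assertion is true; but the operative hypothesis underlying both Theorem~\ref{t:Perfect} and Nori's mod-$p$ surjectivity $\pi_p(\Gamma)=\gcal_p(\f_p)$ is that $\bbg_{ss}$ be \emph{simply connected}, which is strictly stronger and does not follow from perfectness. For example, $\bbg=\mathbb{PGL}_2$ is connected and perfect, yet if $\Gamma$ is the image of a Zariski-dense subgroup of $\SL_2(\bbz)$, then $\pi_p(\Gamma)\subseteq\mathrm{PSL}_2(\f_p)\subsetneq\mathrm{PGL}_2(\f_p)$ for all large $p$; your transfer step ``choose $\gamma\in\Gamma$ with $\pi_p(\gamma)=\bar g\,\pi_p(\gamma_j)$'' then has no solution for $\bar g\notin\mathrm{PSL}_2(\f_p)$, and Theorem~\ref{t:Perfect} itself (whose Lemma~\ref{l:Multiplicative} and density computation also rely on that surjectivity) is not applicable. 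The paper handles this with Lemma~\ref{l:ReductionSC}: pass to the simply connected cover $\tbbg=\tbbg_{ss}\ltimes R_u(\bbg)$, take $\Lambda=\iota^{-1}(\Gamma)\cap\tbbg(\bbq)$, verify via the Galois cohomology exact sequence that $\Lambda$ is finitely generated and Zariski-dense with $\Gamma/\iota(\Lambda)$ finite, pull back the $f_i$, and also compare ramified primes of $L_g(f)$ upstairs and downstairs using Corollary~\ref{c:PointsV}. Your proof needs this reduction step (or an equivalent) inserted before the rest of the argument; once it is, the determinant-based bound on ramified primes is a valid replacement for Proposition~\ref{p:Arithmetic}.
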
 

Using Theorem~\ref{t:Perfect2}, we are able to finish the proof of Theorem~\ref{t:Main}.

In this paragraph, we fix a few notations that will be used in the rest of article. Let $\Pi$ be the set of all the primes. For any rational number $q$, let $\Pi(q)$ be the set of all the prime factors of $q$ (with a positive or negative power). For a Zariski-connected group $\bbg$, let $R(\bbg)$ (resp. $R_u(\bbg)$) be the radical (resp. the unipotent radical) of $\bbg$ and let $\bbg_{ss}:=\bbg/R(\bbg)$ be the semisimple factor of $\bbg$. If $\bbg$ is a Zariski-connected, Levi-semisimple group, then $\bbg\simeq \bbg_{ss}\ltimes R_u(\bbg)$. Let $\bbz^m_*$ be the set of all the primitive $m$-tuple of integers. For any affine scheme $X=\Spec(A)$ and a regular function $f$ on $X$, $V(f)$ denotes the closed subscheme of $X$ defined by $f$, i.e. $V(f):=\{\pfr\in \Spec(A)|\h f\in \pfr\}$.

%%%%%%%%%%%%%%%%%%%%%%%%%%%%%%%%%%%%%%%%%%%%%%%%%%%
\section{The unipotent case.}\label{s:unipotent}
In this section, we prove Theorem~\ref{t:unipotent}. We start with the abelian case. 
\begin{lem}~\label{l:SingleVariableSieve}
Let $P(x)\in\bbz[x]$; then there is a positive integer $r=r(\deg P)$ which depends only on $\deg P$ such that $P(n)$ has at most $r$ prime factors outside $S_P=\gcd_{m\in \bbz} P(m)$ for infinitely many integer $n$.
\end{lem}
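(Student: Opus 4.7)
The plan is to apply Brun's combinatorial sieve to the sequence $\{P(n) : N < n \le 2N\}$, letting $N \to \infty$. First I would reduce to the case where $P$ is squarefree in $\bbq[x]$: replacing $P$ by the product of its distinct irreducible factors changes $P(n)$ only in the multiplicities of its prime divisors, so the ``at most $r$'' statement is preserved at the cost of inflating $r$ by a bounded factor depending only on $\deg P$. Write $P = c\prod_{i=1}^{k} P_i$ with the $P_i \in \bbz[x]$ distinct primitive irreducibles, so $k \le \deg P$, and set $\acal_N := \{|P(n)| : N < n \le 2N\}$.

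Next I would verify the sieve axioms. For a prime $p \notin \Pi(S_P)$ let $\omega(p) := \#\{n \bmod p : P(n) \equiv 0 \pmod p\}$. Then $\omega(p) \le \deg P$, and $\omega(p) < p$ by the very definition of $S_P$ (otherwise $p$ would divide every value $P(m)$). Landau's prime ideal theorem applied in each number field $\bbq[x]/(P_i)$ gives
\[
\sum_{p \le z,\, p \notin \Pi(S_P)} \frac{\omega(p)\log p}{p} \;=\; k\log z + O(1),
\]
so the sifting problem is of dimension $\kappa = k \le \deg P$, and standard estimates on the ``level of distribution'' hold for $\acal_N$ with level up to $N^{1-\vare}$.

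The main step is to invoke Brun's combinatorial sieve (as formulated in \cite{BGS}) with sifting parameter $z := N^{1/s}$, where $s = s(\deg P)$ is a sufficiently large constant depending only on $\deg P$. The standard lower bound then yields
\[
\#\bigl\{n \in (N,2N] \;:\; \gcd\!\bigl(P(n),\,p\bigr)=1 \text{ for every } p \le z \text{ with } p \notin \Pi(S_P)\bigr\} \;\gg\; \frac{N}{(\log N)^{k}},
\]
which is positive for all large $N$, hence infinite over all $N$.

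Finally I would translate ``only large prime factors'' into ``few prime factors'': for any $n$ in the set above, every prime divisor of $P(n)$ outside $\Pi(S_P)$ exceeds $N^{1/s}$, while $|P(n)| \le C N^{\deg P}$, so $P(n)$ has at most $s\cdot \deg P$ prime factors outside $S_P$. Taking $r := s\cdot \deg P$ completes the proof. The only genuinely nontrivial input is the quantitative form of Brun's sieve that produces a positive lower bound once $s$ exceeds an absolute function of $\kappa$; this is classical and can simply be cited. I expect no serious obstacle beyond bookkeeping with the finite set $\Pi(S_P)$ of ``ramified'' primes, which is handled once and for all by restricting the sifting set to $p \notin \Pi(S_P)$.
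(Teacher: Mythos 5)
Your proposal is correct and is precisely the classical Brun-sieve argument; the paper simply cites this as a known result (Halberstam--Richert) without giving a proof, so your sketch fills in what the paper takes for granted. The only place worth a cautionary note is the reduction to squarefree $P = c\prod P_i$: you need to observe that the primes dividing $c$ automatically lie in $\Pi(S_P)$ (since $c \mid P(m)$ for all $m$), and that the exponents $e_i$ in the original $P = c\prod P_i^{e_i}$ are bounded by $\deg P$, so passing to $\prod P_i$ inflates the prime-factor count (with multiplicity) only by a factor $\le \deg P$ and introduces no new primes outside $S_P$; with that verified, the rest is a routine application of the sieve with dimension $\kappa \le \deg P$, level of distribution $N^{1-\vare}$, and the pigeonhole step bounding the number of prime factors $> N^{1/s}$ of a number of size $\ll N^{\deg P}$.
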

\begin{proof}
This is a classical result of sieve theory~\cite{H??}.
\end{proof}
\begin{lem}~\label{l:BadPrimesPolynomial}
For a given $P(x)=\sum_{i=0}^m a_i x^i \in \bbz[x]$, one has 
\[
S_P\subseteq [1,\deg P]\cup \Pi(\gcd_i\h a_i). 
\]
\end{lem}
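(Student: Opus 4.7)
My plan is to argue contrapositively: I take a prime $p \in S_P$ and show it must lie in $[1,\deg P] \cup \Pi(\gcd_i a_i)$. By the (abused) definition, $p \in S_P$ means $p \mid P(m)$ for every $m \in \bbz$.

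Reducing coefficients modulo $p$, let $\bar P \in \bbf_p[x]$ denote the reduction of $P$. The hypothesis $p \mid P(m)$ for all $m \in \bbz$ translates to $\bar P(\bar m) = 0$ in $\bbf_p$ for every $\bar m \in \bbf_p$, so $\bar P$ vanishes identically on $\bbf_p$, giving it $p$ roots in $\bbf_p$.

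Now I split into two cases. If $p \le \deg P$, we are immediately done since $p \in [1,\deg P]$. Otherwise $p > \deg P \ge \deg \bar P$, and since $\bar P \in \bbf_p[x]$ is a polynomial of degree strictly less than its number of roots in the field $\bbf_p$, it must be the zero polynomial of $\bbf_p[x]$. That forces $p \mid a_i$ for every $i$, i.e.\ $p \mid \gcd_i a_i$, hence $p \in \Pi(\gcd_i a_i)$. In either case $p$ lies in the asserted union, completing the proof.

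There is essentially no obstacle here; the only subtlety is bookkeeping about what $S_P$ means (a finite set of primes, namely the prime factors of the integer $\gcd_{m \in \bbz}P(m)$ appearing in Lemma~\ref{l:SingleVariableSieve}), after which the argument is the elementary fact that a nonzero polynomial over a field has at most as many roots as its degree.
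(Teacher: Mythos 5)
Your proof is correct, and since the paper dismisses this lemma with ``It is clear,'' you have simply supplied the argument the authors had in mind: reduce mod $p$, note the reduced polynomial vanishes on all of $\bbf_p$, and use that a nonzero polynomial over a field of order $p > \deg P$ cannot have $p$ roots. Your reading of the abused notation (that $S_P$ means the set of prime divisors of $\gcd_{m\in\bbz}P(m)$) is also the right one. Nothing to add.
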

\begin{proof}
It is clear.
\end{proof}
\begin{lem}~\label{l:AvoidingPrimeFactors}
For $M\in \bbz$ and $P_1(x),\ldots,P_m(x)\in\bbz[x]$, there are integers $a$ and $b$ such that
\[
\bigcup_{i=1}^m\Pi(\gcd(P_i(aj+b),M))\subseteq \bigcup_{i=1}^k S_{P_i},
\]
for any integer $j$.
\end{lem}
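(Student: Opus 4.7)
The plan is a direct Chinese Remainder Theorem argument, exploiting the fact that $S_{P_i}$ records precisely the primes at which $P_i$ is forced to vanish on every integer input. Write $S^{*} = \bigcup_{i=1}^{m} \Pi(S_{P_i})$ and let $p_1,\ldots,p_\ell$ enumerate the primes dividing $M$ that lie outside $S^{*}$. For each such $p_k$ the hypothesis $p_k \notin \Pi(S_{P_i})$ means, by definition of $S_{P_i} = \gcd_{n\in\bbz} P_i(n)$, that there is some residue $c \in \bbf_{p_k}$ with $P_i(c)\not\equiv 0\pmod{p_k}$; equivalently, the zero locus $Z_{i,k} \subseteq \bbf_{p_k}$ of $P_i \bmod p_k$ is a proper subset of $\bbf_{p_k}$.

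The key idea is to kill the variation in $j$ at every bad prime simultaneously by setting
\[
a \;=\; p_1 p_2 \cdots p_\ell,
\]
so that $aj+b \equiv b \pmod{p_k}$ and hence $P_i(aj+b) \equiv P_i(b) \pmod{p_k}$ for every $i,k,j$. The problem then reduces to choosing a single integer $b$ with $P_i(b) \not\equiv 0 \pmod{p_k}$ for all pairs $(i,k)$. By CRT, it is enough to prescribe $b$ modulo each $p_k$ independently, namely to find $b_k \in \bbf_{p_k}\setminus \bigcup_{i=1}^{m} Z_{i,k}$ for each $k$. Since $|Z_{i,k}|\le \deg P_i$, one has
\[
\bigl|\textstyle\bigcup_{i} Z_{i,k}\bigr| \;\le\; \sum_{i=1}^{m}\deg P_i \;=:\; D,
\]
so the desired residue $b_k$ exists whenever $p_k > D$. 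Lifting by CRT yields a single $b$ that works simultaneously at every $p_k$ large enough, and together with the choice of $a$ above this forces $\gcd(P_i(aj+b), p_k) = 1$ for every $i,k,j$, which is exactly the claimed containment.

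The only delicate point is the handful of small primes $p_k \le D$, where it can genuinely happen that the zero loci of $P_1,\ldots, P_m$ cover all of $\bbf_{p_k}$ (for instance $P_1=x$, $P_2=x+1$ at $p=2$). This is the main obstacle, but it is harmless for the sieving application in Theorem~\ref{t:unipotent}: such primes form a finite set depending only on the degrees of the $P_i$, and one absorbs them into the finite exceptional set $S$ that the unipotent sieve is allowed to enlarge. With that convention the CRT construction above goes through and produces the required arithmetic progression $aj+b$.
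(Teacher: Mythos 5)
Your CRT argument is the same approach the paper has in mind: for each prime $p\mid M$ lying outside $\bigcup_i S_{P_i}$, pick a residue $b_p$ with $P_i(b_p)\not\equiv 0\pmod p$ for every $i$, then glue these local choices together by the Chinese Remainder Theorem, taking $a$ to be the product of the relevant primes so that $aj+b$ is constant modulo each of them. The further observation you make is correct and worth recording: such a uniform $b_p$ need not exist when $p\le\sum_i\deg P_i$, and your example $P_1=x$, $P_2=x+1$ at $p=2$ shows the lemma is literally false as stated (with $M=2$, one of $aj+b$ and $aj+b+1$ is always even, yet $S_{P_1}=S_{P_2}=\varnothing$). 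The paper's one-line proof simply asserts the existence of such a $b_p$ without checking that the zero loci of the $P_i$ do not cover all of $\bbf_p$. Your remedy --- discard the finitely many small primes $p\le\sum_i\deg P_i$ into the exceptional set and run the CRT step only at the large primes --- is exactly what the paper implicitly does at the point of use: in the proof of Lemma~\ref{l:MultivariableSieve} the conclusion drawn from Lemmas~\ref{l:BadPrimesPolynomial} and~\ref{l:AvoidingPrimeFactors} adjoins the interval $[1,\deg\pcal+\sum_{i,j}\deg\pcal_{ij}]$ to $S$, and that interval contains precisely the small primes at which your CRT step could fail. So your proof is correct as a proof of the statement the lemma is actually used to deliver, and you have accurately diagnosed the imprecision in the paper's own version.
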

\begin{proof}
For a given prime $p$ which is not in $\bigcup_{i=1}^k S_{P_i}$, by the definition, for some $i$ and $b_p$, $P_i(pj+b_p)$ is coprime to $p$ for any integer $j$. One can complete the argument by the Chinese remainder theorem.  
\end{proof}
\noindent
In the following lemma, we prove a stronger version of Theorem~\ref{t:unipotent} when $\bbu=\bbg_a^d$.
\begin{lem}~\label{l:MultivariableSieve}
Let $P, P_{ij}\in\bbz[x_1,\cdots,x_d]$ for $1\le i\le m$ and $1\le j\le m'$. Assume that, for any $1\le i\le m$, $\gcd_{j=1}^{m'} (P_{ij})=1$. Then there are a positive integer $r$, a finite set $S$ of primes, and a Zariski dense subset $X$ of $\bbz^d$ such that for any $\mathbf{x}\in X$,
\begin{enumerate}
\item $P(\mathbf{x})$ has at most $r$ prime factors outside $S$.
\item $\bigcup_{i=1}^m \Pi(\gcd_{j=1}^{m'}(P_{ij}(\mathbf{x})))\subseteq S.$
\end{enumerate}
\end{lem}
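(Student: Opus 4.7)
The plan is to reduce Lemma~\ref{l:MultivariableSieve} to the single-variable sieve Lemma~\ref{l:SingleVariableSieve} by restricting to a generic affine line in $\bba^d_{\bbq}$, using Bezout to control the gcd condition, and obtaining Zariski density by letting the line's offset vary. After a generic $\bbz$-linear change of coordinates, we may assume that $P$ has non-vanishing leading coefficient as a polynomial in $x_1$ over $\bbq[x_2,\ldots,x_d]$, and that for each $i$, $\gcd_j P_{ij}=1$ in $\bbq(x_2,\ldots,x_d)[x_1]$. This latter property follows from the hypothesis $\gcd_j P_{ij}=1$ in $\bbz[\mathbf{x}]$ via Gauss's lemma: any common factor in $\bbq(\mathbf{x}')[x_1]$ (with $\mathbf{x}'=(x_2,\ldots,x_d)$) clears to a primitive common factor in $\bbz[\mathbf{x}]$.

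Bezout in the PID $\bbq(\mathbf{x}')[x_1]$ then gives, for each $i$, polynomials $A_{ij}\in\bbz[x_1,\mathbf{x}']$ and a nonzero $N_i(\mathbf{x}')\in\bbz[\mathbf{x}']$ with $\sum_j A_{ij}P_{ij}=N_i(\mathbf{x}')$, so that $\gcd_j P_{ij}(\mathbf{x})$ divides $N_i(x_2,\ldots,x_d)$ at every integer point $\mathbf{x}$. For $\mathbf{c}=(c_2,\ldots,c_d)\in\bbz^{d-1}$ outside the proper Zariski closed set where either $N_i(\mathbf{c})$ or the leading $x_1$-coefficient of $P$ vanishes, Lemma~\ref{l:SingleVariableSieve} applied to $\bar P(x_1):=P(x_1,\mathbf{c})$ yields a uniform $r=r(\deg P)$ and infinitely many integers $n$ with $\bar P(n)$ having at most $r$ prime factors outside $S_{\bar P}\subseteq[1,\deg P]\cup\Pi(\mathrm{content}(\bar P))$ (by Lemma~\ref{l:BadPrimesPolynomial}). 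At such an $n$ the slice $\{(n,\mathbf{c})\}$ satisfies (1) and (2) with a $\mathbf{c}$-dependent set $S_\mathbf{c}=[1,\deg P]\cup\Pi(\mathrm{content}(\bar P))\cup\bigcup_i\Pi(N_i(\mathbf{c}))$.

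The main obstacle is to produce a single finite $S$ uniform in $\mathbf{c}$ while keeping the chosen $\mathbf{c}$'s Zariski dense in $\bba^{d-1}$. For this I would apply a multivariate extension of Lemma~\ref{l:AvoidingPrimeFactors}, obtained by iterating the one-variable version coordinate-by-coordinate through the Chinese remainder theorem: restrict $\mathbf{c}$ to a residue class modulo a large integer $M$ so that the primes below $M$ that divide $\mathrm{content}(\bar P)$ or $N_i(\mathbf{c})$ on this class are confined to a fixed set $S_0$. Primes $p>M$ that still divide $N_i(\mathbf{c})$ for specific $\mathbf{c}$'s must be handled separately, most naturally by inducting on $d$ (applying the lemma recursively to the $N_i$'s, viewed as polynomials in $d-1$ variables), or by supplementing with a codimension argument exploiting the fact that $V(P_{i1},\ldots,P_{im'})\subseteq \bba^d$ has codimension at least $2$. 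The Zariski dense $X\subseteq\bbz^d$ is then the union of the single-variable-sieve slices as $\mathbf{c}$ ranges over the chosen family.
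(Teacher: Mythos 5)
Your proposal correctly isolates the real difficulty --- producing a single finite $S$ that works for all slices --- but the two remedies you sketch do not resolve it. Restricting $\mathbf{c}$ to a residue class modulo a fixed $M$ only controls primes below $M$; for primes $p>M$ dividing some $N_i(\mathbf{c})$, the set of such $p$ is unbounded as $\mathbf{c}$ ranges over any Zariski dense family, so $\bigcup_{\mathbf{c}}\Pi(N_i(\mathbf{c}))$ cannot be absorbed into a finite $S$. Your first fallback (apply the lemma recursively to the $N_i$'s as $(d-1)$-variable polynomials) only bounds the \emph{number} of prime factors of $N_i(\mathbf{c})$ outside a fixed set; it says nothing about \emph{which} primes those are, so the union of possible primes over $\mathbf{c}\in X$ is still infinite. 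Your second fallback (the codimension-$2$ intuition) points in the right direction but is not carried out.

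The paper's resolution is structurally different and is worth internalizing. Instead of trying to tame $\Pi(N_i(\mathbf{c}))$, it uses that $\gcd_j P_{ij}(\mathbf{x},x_d)$ divides \emph{both} the fixed $M=\prod_i Q_i(\mathbf{x})$ (the Bezout remainders, your $N_i$) \emph{and} each $\pcal_{ij}(x_d)=P_{ij}(\mathbf{x},x_d)$ as $x_d$ varies. Lemma~\ref{l:AvoidingPrimeFactors} is then applied in the \emph{free} variable $x_d$ (not the frozen $\mathbf{c}$), producing an arithmetic progression $ax+b$ so that $\Pi(\gcd(\pcal_{ij}(ax+b),M))$ lands in $\bigcup S_{\pcal_{ij}}$. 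By Lemma~\ref{l:BadPrimesPolynomial}, $S_{\pcal_{ij}}\subseteq[1,\deg\pcal_{ij}]\cup\Pi(\gcd_l H^{(l)}_{ij}(\mathbf{x}))$, where the $H^{(l)}_{ij}$ are the $x_d$-coefficients of $P_{ij}$; and --- this is the crucial choice that your proposal misses --- the induction in $d-1$ variables is applied to the families $\{H^{(l)}_{ij}\}_l$ (together with the $x_d$-content and $x_d$-coefficients of $P$), not to the Bezout remainders $N_i$. That choice is what makes $\Pi(\gcd_l H^{(l)}_{ij}(\mathbf{x}))\subseteq S$ hold uniformly. In short: the large primes of $M$ are never confined --- they are \emph{dodged} by the AP in the remaining variable, which is the deterministic realization of the codimension-$\ge 2$ heuristic you allude to. Without this step your argument does not close.
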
 
\begin{proof}
We proceed by induction on $d$ the number of variables. For $d=1$ and any $i$, there are integer polynomials $Q_{ij}$ such that $\sum_{j=1}^{m'} Q_{ij}(x)P_{ij}(x)=m_i\in\bbz$. Hence for any $x\in\bbz$, $\gcd_{j=1}^{m'}(P_{ij}(x))$ divides $m_i$, and,  by  Lemma~\ref{l:SingleVariableSieve}, we are done.
\\

\noindent
 For the induction step, without loss of generality, by increasing $m$ if necessary, we may and will assume that $P_{ij}$'s are irreducible polynomials.  Viewing $P_{ij}$'s as polynomials on $x_d$, for any $i$, we can find polynomials $Q_{ij}(x_1,\ldots,x_d)$ such that $\sum_{j=1}^{m'} Q_{ij} P_{ij}=Q_i\in\bbz[x_1,\ldots,x_{d-1}]$. Let $P_{ij}=\sum_{l} H_{ij}^{(l)} x_d^l$, where $H_{ij}^{(l)}\in\bbz[x_1,\ldots,x_{d-1}]$. Since $P_{ij}$ is irreducible, either $P_{ij}$ is independent of $x_d$ or $\gcd_{l}(H_{ij}^{(l)})=1$.   We also write $P$ as a polynomial in $x_d$: 
 \[
 P(x_1,\ldots,x_d)=H(x_1,\ldots,x_{d-1}) \sum H_i(x_1,\ldots,x_{d-1}) x_d^i,
 \]
 where $\gcd_i  H_i =1$. Now let's apply induction hypothesis for the following polynomials:
 \begin{enumerate}
 \item Let $H(x_1,\cdots,x_{d-1})$ be  the new $P$.
 \item Let $\{H_i\}$ be one of the sequence of coprime polynomials.
 \item For a given $i$, either $\{P_{ij}\}_j$ if all of them are independent of $x_d$ or $\{H_{ij}^{(l)}\}_l$ where $P_{ij}$ depends on $x_d$.
 \end{enumerate}
\noindent 
 Because of the way we chose the sequences of polynomials, certainly, the g.c.d. of each sequence is 1, and we are allowed to use the induction hypothesis. So we get  a positive number $r$, a finite set $S$ of primes, and  a Zariski dense subset $X$ of $\bba^{d-1}$, such that for any ${\bf x} =(x_1,\cdots,x_{d-1})\in X,$
 
 \begin{enumerate}
 \item $H({\bf x})$ has at most $r$ prime factors in the ring of $S$-integers.
 \item $\Pi(\gcd_i(H_i({\bf x})))\subseteq S$.
 \item For a given $i$, either $\Pi(\gcd_j(P_{ij}({\bf x})))\subseteq S$ if all of $\{P_{ij}\}_j$ are independent of $x_d$ or $\Pi(\gcd_l(H_{ij}^{(l)}({\bf x})))\subseteq S$ where $P_{ij}$ depends on $x_d$.
 \end{enumerate}
 \noindent
 Let us fix ${\bf x}\in X$, and set $M=\prod_i Q_i({\bf x})$. For a given $i$, if  $\{P_{ij}\}_j$ are independent of $x_d$, for all $j$, we have  already got the condition on the g.c.d. of their value. So let us just focus on $i$'s for which there is $j$ such that $P_{ij}$ depends on $x_d$. For all such $i$ and $j$, consider the single variable polynomials $\pcal_{ij}(x_d)=\sum_k H_{ij}^{(k)}({\bf x}) x_d^k$. Let us also introduce $\pcal(x_d)=\sum H_i({\bf x}) x_d^i$. By Lemma~\ref{l:BadPrimesPolynomial} and Lemma~\ref{l:AvoidingPrimeFactors}, there is an arithmetic progression $ax+b$, such that for any $x\in\bbz$, 
 \[
 \Pi(\gcd(\pcal(ax+b),M))\cup\bigcup_{i,j}\pi(\gcd(\pcal_{ij}(ax+b),M))\subseteq [1,\deg p+\sum_{i,j}\deg p_{ij}]\cup S.
 \]
 \noindent
 Let $\widetilde{S}=\Pi\cap([1,\deg \pcal+\sum_{i,j}\deg \pcal_{ij}]\cup S)$. Thus, by this discussion and Lemma~\ref{l:BadPrimesPolynomial}, we have that for any integer $x$ and ${\bf x}\in X$,
 \[
 S_{\pcal(ax+b)}\cup \bigcup_i\Pi(\gcd_j(P_{ij}({\bf x}, ax+b)))\subseteq \widetilde{S}.
 \]
 Now classical sieve on $\pcal(ax+b)$ and induction hypothesis give us $\tilde{r}$ such that for any ${\bf x}\in X$, we would be able to find an infinite subset of integer numbers $V_{\bf x}$ with the following properties:
 \begin{enumerate}
 \item $P({\bf x},x_d)$ has at most $\tilde{r}$ prime factors in the ring of $\widetilde{S}$-integers, for any $x_d\in V_{\bf x}$.
 \item $\bigcup_i \Pi(\gcd_{j}(P_{ij}({\bf x},x_d)))\subseteq \widetilde{S},$ for any $x_d\in V_{\bf x}$. 
 \end{enumerate}
 \noindent
 Hence $\tilde{r}$, $\widetilde{S}$, and $\bigsqcup_{{\bf x}\in X} \{{\bf x}\}\times V_{\bf x}$ satisfy our claim. 
\end{proof}
\begin{lem}~\label{l:FinitelyGeneratedDiscrete}
A finitely generated subgroup of the group of unipotent upper-triangular rational matrices  ${\rm U}_n(\bbq)$ is discrete in ${\rm U}_n(\bbr)$.
\end{lem}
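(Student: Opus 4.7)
The plan is to proceed by induction on $n$, with the base case $n\le 1$ trivial since ${\rm U}_1$ is the trivial group. For the inductive step with $n\ge 2$, I will exploit the $\bbq$-algebraic group homomorphism $\pi\colon {\rm U}_n\to {\rm U}_{n-1}$ that forgets the last column: writing an element of ${\rm U}_n$ in block form $\left[\begin{smallmatrix} A & v \\ 0 & 1\end{smallmatrix}\right]$ with $A\in {\rm U}_{n-1}$ and $v\in \bba^{n-1}$, set $\pi$ to send it to $A$. A direct block-multiplication check shows that $\pi$ is a group homomorphism whose kernel is the abelian subgroup $\{\left[\begin{smallmatrix} I & v \\ 0 & 1\end{smallmatrix}\right]:v\in\bba^{n-1}\}\cong\bbg_a^{n-1}$.

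The image $\pi(\Gamma)\subseteq {\rm U}_{n-1}(\bbq)$ is finitely generated by the images of a generating set, hence discrete in ${\rm U}_{n-1}(\bbr)$ by the induction hypothesis. For the kernel $K:=\Gamma\cap\ker(\pi)$, I note that $K$ is a subgroup of the finitely generated nilpotent group $\Gamma$; since such groups are polycyclic and hence satisfy the maximum condition on subgroups, $K$ is itself finitely generated. As a finitely generated abelian subgroup of $\bbq^{n-1}$, $K$ has a common denominator $N$, so $K\subseteq \tfrac{1}{N}\bbz^{n-1}$, which is discrete in $\bbr^{n-1}\cong\ker(\pi_{\bbr})$.

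To combine these, I take any sequence $\gamma_k\in\Gamma$ with $\gamma_k\to I$ in ${\rm U}_n(\bbr)$. Continuity of $\pi_{\bbr}\colon{\rm U}_n(\bbr)\to {\rm U}_{n-1}(\bbr)$ yields $\pi(\gamma_k)\to I$, and the discreteness of $\pi(\Gamma)$ forces $\pi(\gamma_k)=I$ for all large $k$. Hence $\gamma_k\in K$ eventually, and since $\ker(\pi_{\bbr})$ is closed in ${\rm U}_n(\bbr)$ the convergence $\gamma_k\to I$ persists inside $\ker(\pi_{\bbr})$. Discreteness of $K$ then forces $\gamma_k=I$ for all large $k$, completing the induction.

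The only step that requires a nontrivial group-theoretic input is the finite generation of $K$, which rests on the classical Noetherian property of finitely generated nilpotent groups. A tidier alternative more in tune with the rest of the paper is to cite Malcev theory directly: a finitely generated torsion-free nilpotent group embeds as a cocompact lattice in its simply connected Malcev completion, and in the present setting that completion is canonically identified with the real points of the Zariski closure of $\Gamma$ in ${\rm U}_n$, which is closed in ${\rm U}_n(\bbr)$; discreteness of $\Gamma$ in ${\rm U}_n(\bbr)$ then follows at once.
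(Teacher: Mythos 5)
Your argument is correct, but it takes a genuinely different route from the paper's. You induct on $n$ via the projection $\pi\colon{\rm U}_n\to{\rm U}_{n-1}$ that forgets the last column, handle the image by the induction hypothesis, and handle the kernel $K=\Gamma\cap\ker\pi$ by invoking the Noetherian (max condition) property of finitely generated nilpotent groups to see that $K$ is finitely generated, hence lands in $\tfrac{1}{N}\bbz^{n-1}$ and is discrete; a standard sequence argument then glues the two. The paper instead gives a one-line conjugation trick: with $d_N=\diag(N^{n-1},\ldots,1)$, conjugation by $d_N^{-1}$ scales the $(i,j)$-entry by $N^{i-j}$, so $d_N^{-1}{\rm U}_n(\bbz)d_N$ consists of the unipotent matrices whose $(i,j)$-entry has denominator dividing $N^{j-i}$; these form an increasing (under divisibility of $N$) family of discrete subgroups exhausting ${\rm U}_n(\bbq)$, so any finitely generated $\Gamma$ sits inside a single $d_N^{-1}{\rm U}_n(\bbz)d_N$ and is therefore discrete. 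The paper's proof is shorter and more elementary, needing no induction and no facts about polycyclic groups; your proof is more structural, exposing the extension ${\rm U}_{n-1}\to{\rm U}_n\to\bbg_a^{n-1}$, and your closing remark about Malcev completions is essentially the content the paper defers to the subsequent Lemma~\ref{l:UnipotentZariskiDense}. One small point worth making explicit in your write-up: the fact you use, that every subgroup of a finitely generated nilpotent group is finitely generated, is a nontrivial theorem (via polycyclicity), whereas the paper's argument avoids it entirely.
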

\begin{proof}
 Let $d_N=\diag(N^{n-1},N^{n-2},\cdots,1)$.
The claim is a clear consequence of the fact that $${\rm U}_n(\bbq)=\bigcup_{N\in \bbn} d_N^{-1}\cdot {\rm U}_n(\bbz)\cdot  d_N,$$ and if $N$ divides $M$,
\[
d_N^{-1}\cdot {\rm U}_n(\bbz)\cdot  d_N\subseteq d_M^{-1}\cdot {\rm U}_n(\bbz)\cdot  d_M.
\]
\end{proof}
\begin{lem}~\label{l:UnipotentZariskiDense}
 Let $\bbu$ be a unipotent $\bbq$-group. If $\Gamma$ is a finitely generated, Zariski dense subgroup of $\bbu(\bbq)$, then there is a lattice $\Lambda$ in $\mathfrak{u}=\Lie(\bbu)(\bbr)$ such that $\exp(\Lambda)$ is a subset of $\Gamma$.
\end{lem}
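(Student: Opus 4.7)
The plan is to invoke Malcev's theory of lattices in simply connected nilpotent Lie groups, after first establishing that $\Gamma$ sits in $\bbu(\bbr)$ as a uniform lattice, and then to clear denominators coming from the Baker--Campbell--Hausdorff formula.

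First I would fix a $\bbq$-embedding $\bbu \hookrightarrow {\rm U}_n$ into the upper triangular unipotent group (such an embedding exists for any $\bbq$-unipotent group). Then $\bbu(\bbr)$ is a closed, connected, simply connected nilpotent Lie subgroup of ${\rm U}_n(\bbr)$, and the exponential map $\exp:\mathfrak{u}(\bbr)\to\bbu(\bbr)$ is a real-analytic diffeomorphism whose inverse $\log$ is polynomial. By Lemma~\ref{l:FinitelyGeneratedDiscrete}, $\Gamma$ is discrete in ${\rm U}_n(\bbr)$, hence in $\bbu(\bbr)$. Since $\Gamma$ is Zariski dense in $\bbu$, the only closed connected subgroup of $\bbu(\bbr)$ containing $\Gamma$ is all of $\bbu(\bbr)$; by Malcev's criterion this forces the discrete group $\Gamma$ to be a uniform lattice in $\bbu(\bbr)$.

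Next I would apply Malcev's structure theorem for lattices in simply connected nilpotent Lie groups to produce a strong Malcev basis $X_1,\ldots,X_k$ of $\mathfrak{u}(\bbr)$, compatible with a composition series of $\bbu$ by $\bbq$-subgroups, such that the ``coordinates of the second kind'' map
\[
\phi:\bbr^k\longrightarrow \bbu(\bbr),\qquad \phi(t_1,\ldots,t_k)=\exp(t_1X_1)\cdots\exp(t_kX_k),
\]
is a diffeomorphism and $\phi(\bbz^k)$ is a finite index subgroup of $\Gamma$. After passing to a finite index subgroup of $\Gamma$ (still finitely generated and Zariski dense, so the same lemma would apply if needed), we may assume $\phi(\bbz^k)\subseteq \Gamma$.

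Finally, because $\mathfrak{u}(\bbr)$ is nilpotent the Baker--Campbell--Hausdorff series terminates, so there exist polynomials $s_1,\ldots,s_k\in \bbq[t_1,\ldots,t_k]$ with
\[
\exp\bigl(t_1X_1+\cdots+t_kX_k\bigr)=\phi\bigl(s_1(t),\ldots,s_k(t)\bigr).
\]
The $s_i$ have only finitely many coefficients, so there is a positive integer $N$ with $s_i(Nt_1,\ldots,Nt_k)\in\bbz[t_1,\ldots,t_k]$ for every $i$. Setting $\Lambda:=N\bbz X_1\oplus\cdots\oplus N\bbz X_k$, a lattice in $\mathfrak{u}(\bbr)$, we get $\exp(\Lambda)\subseteq\phi(\bbz^k)\subseteq\Gamma$, as desired.

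The one delicate point is the passage from ``discrete and Zariski dense'' to ``uniform lattice'': this is exactly Malcev's rigidity for nilpotent Lie groups, and is the only non-linear-algebra input. The BCH denominator clearing is routine once the Malcev basis is in hand, because nilpotency reduces it to a finite computation.
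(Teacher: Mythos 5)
Your proposal is correct and follows essentially the same route as the paper: embed $\bbu$ in ${\rm U}_n$, use Lemma~\ref{l:FinitelyGeneratedDiscrete} together with Raghunathan's theorem (what you call Malcev's criterion) to conclude $\Gamma$ is a lattice in $\bbu(\bbr)$, and then invoke Malcev theory to produce $\Lambda$. The only cosmetic difference is in the last step: the paper takes $\Lambda_0$ to be the $\bbz$-span of $\log\Gamma$, notes $\langle\exp(\Lambda_0)\rangle$ is a finite extension of $\Gamma$, and sets $\Lambda=m\Lambda_0$, whereas you work with a strong Malcev basis and clear the Baker--Campbell--Hausdorff denominators; both are standard incarnations of the same Malcev machinery.
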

\begin{proof}
By Lie-Kolchin theorem, $\bbu$ can be embedded in ${\rm U}_n$, for some $n$, as a $\bbq$-group. Therefore, by Lemma~\ref{l:FinitelyGeneratedDiscrete}, $\Gamma$ is a closed subgroup of $U=\bbu(\bbr)$. Hence by~\cite[Theorem 2.12]{Ra}, it is a lattice in $U$. So $\Lambda_0$ the $\bbz$-span of $\log \Gamma$ is a lattice in $\mathfrak{u}$, and  $\widetilde{\Gamma}=\langle \exp(\Lambda) \rangle$ is a finite extension of $\Gamma$. In particular, for some $m$, $\exp(m \Lambda_0)$ is a subset of $\Gamma$, as we desired. 
 \end{proof}
\begin{proof}[Proof of Theorem~\ref{t:unipotent}]
Since $\bbu$ is a unipotent group, $\Lie(\bbu)$ can be identified with the underlying $\bbq$-variety of $\bbu$ via the exponential map $\exp:\Lie(\bbu)\rightarrow \bbu$. Via this identification, we can and will view $p$ and $p_i$'s as regular functions on $\Lie(\bbu)$, i.e. polynomials in $d=\dim \bbu$ variables. By Lemma~\ref{l:UnipotentZariskiDense}, we find a lattice $\Lambda$ of $\Lie(\bbu)(\bbr)$ such that $\exp(\Lambda)\subseteq \Gamma$. Since the logarithmic map is defined over $\bbq$, $\Lambda$ is a subgroup of $\Lie(\bbu)(\bbq)$. Hence we can identify $\Lie(\bbu)(\bbq)$ with $\bbq^d$ in a way that $\Lambda$ gets identified with $\bbz^d$. Now one can easily finish the proof using Lemma~\ref{l:MultivariableSieve}.
\end{proof}

%%%%%%%%%%%%%%%%%%%%%%%%%%%%%%%%%%%%%%%%%%%%%%%%%%%%%%%%%%%
\section{The perfect case I. }
The goal of this section is to prove Theorem~\ref{t:Perfect}. To do so, we essentially follow~\cite{BGS}. However we have to be extra careful as we need to understand how $r$ and $S$ depend on $f$. 

In this section, we will assume that the Zariski-closure $\Gamma$ is perfect and it is generated by its unipotent subgroups. It is equivalent to say that $\bbg\simeq \bbg_{ss}\ltimes R_u(\bbg)$ is perfect and $\bbg_{ss}$ is Zariski-connected and simply connected. 

\begin{prop}\label{p:Geometric}
In the above setting, let $f$ be a non-zero element of $\bbq[\bbg]$ and let $\tilde{f}$ be a lift of $f$ to $\bba^{n^2}_{\bbq}$. Then there is  a positive integer $M$ which depends only on the degree of $\tilde{f}$ such that $V(f)$ has at most $M$ geometric irreducible components.
\end{prop}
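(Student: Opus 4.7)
The plan is to apply the classical Bezout theorem after decomposing $\tilde f$ into its geometric irreducible factors. Let $d:=\deg\tilde f$ and let $D$ denote the degree of the projective closure $\bar\bbg\subset\bbp^{n^2}$ of $\bbg\hookrightarrow\bba^{n^2}$; this $D$ is a fixed invariant of the ambient group and does not depend on $f$. I first record that $\bbg$ is \emph{geometrically} irreducible: as a smooth Zariski-connected $\bbq$-group, $\bbg$ equals its own identity component, which is geometrically connected, and a smooth geometrically connected variety is geometrically irreducible. Consequently $\bar\bbg$ is a geometrically irreducible projective variety of degree $D$.

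Over $\bar\bbq$, factor $\tilde f=\prod_{i=1}^k\tilde f_i^{e_i}$ into pairwise distinct irreducible polynomials with $\sum_i e_i\deg\tilde f_i=d$, and in particular $k\le d$. Since $f=\tilde f|_{\bbg}$ is not identically zero, no single $\tilde f_i$ can vanish on $\bbg$ (otherwise the whole product would), so each $V(\tilde f_i)\cap\bbg$ is a proper closed subscheme of $\bbg_{\bar\bbq}$, and
\[
V(f)=\bigcup_{i=1}^{k}\bigl(V(\tilde f_i)\cap\bbg\bigr).
\]
Homogenize each $\tilde f_i$ to a hypersurface $H_i\subset\bbp^{n^2}$ of degree $\deg\tilde f_i$, so that $\bar\bbg\not\subset H_i$. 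Projective Bezout applied to the irreducible cycle $\bar\bbg$ against $H_i$ gives that the intersection cycle $\bar\bbg\cdot H_i$ has degree at most $D\cdot\deg\tilde f_i$; since each reduced geometric irreducible component contributes degree at least one, the number of geometric irreducible components of $\bar\bbg\cap H_i$, and therefore of $V(\tilde f_i)\cap\bbg$, is at most $D\cdot\deg\tilde f_i$.

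Summing over $i$, the number of geometric irreducible components of $V(f)$ is at most $D\sum_{i=1}^{k}\deg\tilde f_i\le Dd$, so the statement holds with $M:=Dd$. There is no genuine obstacle in this argument; the two items that merit brief care in a full write-up are the geometric irreducibility of $\bbg$ (where one uses that $\bbg$ is a Zariski-connected smooth $\bbq$-group, as is built into the hypothesis that $\bbg_{ss}$ is Zariski-connected and simply connected and $\bbg\simeq\bbg_{ss}\ltimes R_u(\bbg)$) and the remark that multiplicities $e_i$ and any embedded components in $\bar\bbg\cdot H_i$ only inflate the Bezout degree and do not affect the count of reduced geometric components, so the bound remains valid.
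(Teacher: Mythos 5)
Your proof is correct, but it takes a genuinely different route than the paper. The paper applies a ``general Bezout'' bound (from Schinzel's book) directly to the ideal $\langle \tilde f, g_1,\ldots,g_k\rangle$, where $g_1,\ldots,g_k$ are defining relations for $\bbg$ in $\bba^{N^2}$: the sum of the degrees of the irreducible components of the projective closure of $V(f)$ is at most $\deg\tilde f\cdot\prod_i\deg g_i$, and since each component has degree at least one, $M=\deg\tilde f\cdot\prod_i\deg g_i$ works. You instead first establish that $\bbg$ is geometrically irreducible (using that it is a Zariski-connected smooth group scheme with a rational point), factor $\tilde f$ into geometric irreducibles, and apply the classical Bezout theorem for an irreducible cycle against a hypersurface to each factor separately, then sum. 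Both yield a bound of the shape (constant depending only on $\bbg$) times $\deg\tilde f$. Your approach buys two things: it only needs the standard refined Bezout for an irreducible variety cut by a hypersurface, rather than the heavier multi-hypersurface generalization; and the resulting constant $D=\deg\bar\bbg$ is at least as tight as $\prod_i\deg g_i$. The paper's approach is arguably quicker to state since it avoids the geometric-irreducibility digression and the factorization of $\tilde f$, at the cost of invoking a stronger form of Bezout. One small remark: the geometric irreducibility of $\bbg$ that you invoke is indeed available in this section's hypotheses ($\bbg\simeq\bbg_{ss}\ltimes R_u(\bbg)$ with $\bbg_{ss}$ Zariski-connected, hence $\bbg$ is Zariski-connected), and your observation that a connected $\bbq$-group has a rational point and is therefore geometrically connected, hence geometrically irreducible by smoothness, is exactly right.
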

\begin{proof}
Let $g_i\in \bbq[\bba^{N^2}]$ be defining relations of $\bbg$. So $V(f)$ is isomorphic to 
\[
\Spec(\bbq[x_1,\ldots,x_{N^2}]/\langle \tilde{f}, g_1, \ldots, g_k\rangle).
\]
We also notice that the number of irreducible components of a subvariety of $\bba^{N^2}$ is the same as the number of irreducible components of its closure in $\bbp^{N^2}$. On the other hand, by general Bezout's theorem~\cite{Sc}, we have that 
\[
\sum_i\deg W_i\le \deg \overline{V(\tilde{f})}\cdot\prod_i \deg \overline{V(g_i)},
\]
where $W_i$ are the irreducible components of the projective closure of $V(f)$. This complete the proof.
\end{proof}
\begin{lem}\label{l:PointsEmpty}
Let $V$ be a closed subset of $\bba^n_{\f_p}$ defined over $\f_p$. If $\f$ is a non-trivial extension of $\f_p$ and $\Aut(\f)$ acts simply transitively on the geometric irreducible components of $V$, then
\[
\#V(\f_p)=O(p^{\dim V-1}),
\]
where the constant depends only on $n$, the geometric degree and the geometric dimension of $V$.
\end{lem}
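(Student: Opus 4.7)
The plan is to leverage the simply transitive Galois action on components to force every $\f_p$-rational point into a pairwise intersection of geometric irreducible components, which has strictly smaller dimension and hence yields a smaller point-count.

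To set up, decompose $V_{\overline{\f_p}} = W_1 \cup \cdots \cup W_k$ into its geometric irreducible components. By hypothesis $\Aut(\f/\f_p)$ acts simply transitively on $\{W_1,\ldots,W_k\}$, so $k = [\f:\f_p] \ge 2$, the components share a common field of definition $\f$, and they all have the same dimension $d = \dim V$. Moreover the arithmetic Frobenius $\phi$ acts as a $k$-cycle on the index set, so $\phi(W_i) \neq W_i$ for every $i$.

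The key observation is the following. An $\f_p$-point $x \in V(\f_p)$ is $\phi$-fixed and lies on at least one component, say $x \in W_i$. Applying $\phi$ gives $x = \phi(x) \in \phi(W_i) = W_j$ for some $j \neq i$, so
\[
V(\f_p) \;\subseteq\; \bigcup_{i \neq j}\,(W_i \cap W_j)(\overline{\f_p}).
\]
Each $W_i \cap W_j$ is a proper closed subset of the irreducible $d$-dimensional variety $W_i$, hence has dimension at most $d-1$, and Bezout in the projective closure (exactly as in Proposition~\ref{p:Geometric}) bounds its degree by a constant depending only on $n$ and the geometric degree of $V$; the number $k$ of components is similarly bounded by $\deg V$.

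To conclude, I would invoke the elementary estimate $\#X(\f_p) \le (\deg X)\, p^{\dim X}$ valid for any closed subvariety $X \subseteq \bba^n_{\f_p}$ (provable by induction on $\dim X$ via projection onto a generic coordinate hyperplane). Applying this to each $W_i \cap W_j$ and summing over the $O((\deg V)^2)$ pairs produces $\#V(\f_p) = O(p^{d-1})$ with the implied constant depending only on $n$, the degree, and the dimension, as claimed. The only conceptual input is simple transitivity forcing $\f_p$-points into intersections; the remaining ingredients (Bezout and elementary point counting) are entirely routine, so I do not expect any serious obstacle here.
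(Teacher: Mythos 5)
Your proof is correct and rests on the same underlying idea as the paper's, but the mechanism you use to exploit it is genuinely different. The paper proves the (slightly sharper) claim that $V(\f_p)=V'(\f_p)$ where $V'=\bigcap_{\sigma\in\Aut(\f)}W^{\sigma}$ is the full intersection of conjugate components. It does this by a purely algebraic argument: take any $f\in\f[x_1,\ldots,x_n]$ vanishing on $W^{\sigma}$, write $f=\sum_j\lambda_j f_j$ over an $\f_p$-basis $\{\lambda_j\}$ of $\f$ with $f_j\in\f_p[x_1,\ldots,x_n]$, and observe that $f(\xbf_0)=0$ with $f_j(\xbf_0)\in\f_p$ forces each $f_j(\xbf_0)=0$, hence $\sigma'(f)(\xbf_0)=0$ for all $\sigma'$. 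Your argument replaces this with the cleaner geometric observation that an $\f_p$-point is Frobenius-fixed, so $x=\phi(x)\in\phi(W_i)\neq W_i$, landing $x$ in a pairwise intersection. Both land in a variety of dimension $\le\dim V-1$ defined over $\f_p$ with controlled degree, and then finish with a standard point count (you via the elementary $\#X(\f_p)\le(\deg X)p^{\dim X}$ bound, the paper via \cite[Lemma 3.1]{FHM}). One small remark: if you iterate Frobenius, $x\in W_i$ implies $x\in\phi^{\ell}(W_i)$ for all $\ell$, so you actually get $x\in\bigcap_{\sigma}W^{\sigma}$, recovering the paper's stronger containment essentially for free; and when applying the point count you should either pass to the Galois descent of $\bigcup_{i\neq j}(W_i\cap W_j)$ to $\f_p$ (which exists since that union is $\Gal(\overline{\f_p}/\f_p)$-stable) or note that the elementary estimate applies equally well to counting $\f_p$-rational points of a closed subset of $\bba^n_{\overline{\f_p}}$ that is not itself defined over $\f_p$.
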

\begin{proof}
By the assumption, $V=\bigcup_{\sigma\in\Aut(\f)}W^{\sigma}$ and $\dim(W\cap W^{\sigma})\le \dim V-1$ when $\sigma$ is not identity. It is clear that $V'=\bigcap_{\sigma\in \Aut(\f)} W^{\sigma}$ is also defined over $\f_p$. We claim that
\[
V(\f_p)=V'(\f_p).
\]
 To show the claim, we note that any irreducible component is also affine. Let $\xbf_{0}\in V(\f_p)$. Then $\xbf_0\in W^{\sigma}$ for some $\sigma\in \Aut(\f)$, i.e. $f(\xbf_0)=0$ for any $f\in \f[x_1,\ldots,x_n]$ which vanishes on $W^{\sigma}$. Let  $f=\sum_j \lambda_j f_j$ where $\{\lambda_j\}$ is an $\f_p$-basis of $\f$ and $f_j\in \f_p[x_1,\ldots,x_n]$. Thus $f_j(\xbf_0)\in \f_p$. Since $\lambda_j$'s are linearly independent over $\f_p$ and $\sum_j \lambda_j \f_j(\xbf_0)=0$, we have that $f_j(\xbf_0)=0$ for any $j$. Hence $\sigma'(f)(\xbf_0)=0$ for any $\sigma'\in \Aut(\f)$, which completes the proof of the claim. We can also control the degree of $V'$ by the degree and the dimension of $V$. Hence, by~\cite[Lemma 3.1]{FHM}, the Lemma follows.
\end{proof}
If $V$ is a closed subset of $\bba^n_{\bbq}$, by the definition there are polynomials $p_1,\ldots, p_k\in \bbq[x_1,\ldots,x_n]$ such that $V=\{\pfr \in\Spec \bbq[x_1,\ldots,x_n]|\h \langle p_1,\ldots,p_k\rangle\subseteq \pfr\}$. For a large enough prime $p$, we can look at $p_i$\rq{}s modulo $p$ and get a new variety $V_p$ over $\f_p$. Changing $p_i$\rq{}s to another set of defining relations only changes finitely many $V_p$. Hence for almost all $p$,  $V_p$ just depends on $V$. We will abuse notation and use $V(\f_p)$ instead of $V_p(\f_p)$ in the following statements.

\begin{prop}\label{p:PointsQIrreducible}
Let $V$ be a closed subset of $\bba^n_{\bbq}$ all of whose geometric irreducible components are defined over $k$ and  the group of automorphism $\Aut(k)$ of $k$ acts simply transitively on the geometric irreducible components of $V$. Then for almost all $p$,
\[
\#V(\f_p)=\begin{cases}
O(p^{\dim V-1})&\text{if $p$ does not completely split over $k$,}\\
\deg(k)\h p^{\dim V}+O(p^{\dim V-\frac{1}{2}})&\text{otherwise,}
\end{cases}
\]
and the implied constants depend only on $n$, the degree of $k$, the geometric degree and the geometric dimension of $V$.
\end{prop}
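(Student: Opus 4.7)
The plan is to split into cases according to the splitting behavior of $p$ in $k$, using in an essential way that the action of $\Aut(k)$ on the geometric components is regular. Write $V_{\overline{\bbq}}=\bigcup_{i=1}^d W_i$ with $d=\deg(k)$ and each $W_i$ defined over $k$. The hypothesis that $\Aut(k)$ acts simply transitively on the $d$ components forces $|\Aut(k)|=[k:\bbq]$, hence $k/\bbq$ is Galois (by Artin), so for any unramified prime $p$ the Frobenius $\sigma_p$ is a well-defined conjugacy class in $\Aut(k)$. Spreading the $W_i$ out to $\ocal_k\otimes\bbz_{(p)}$ and reducing modulo a prime $\pfr\mid p$ shows that, outside a finite set of \textquotedblleft bad\textquotedblright{} primes (ramified primes plus bad-reduction primes, all absorbed into the \textquotedblleft almost all $p$\textquotedblright{} clause), the geometric irreducible components of $V_{\f_p}$ are exactly the reductions $\overline{W}_i$, each of dimension $\dim V$, and the geometric Frobenius permutes them through the regular action of $\sigma_p$ on the index set $\{1,\dots,d\}$.

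In the split case ($\sigma_p=1$), each $\overline{W}_i$ is geometrically irreducible and individually defined over $\f_p$. The Lang--Weil estimate in the uniform form of~\cite[Lemma 3.1]{FHM} gives $\#\overline{W}_i(\f_p)=p^{\dim V}+O(p^{\dim V-1/2})$ with implied constant depending only on $n$, $\dim V$, and the geometric degree. The pairwise intersections $\overline{W}_i\cap \overline{W}_j$ for $i\neq j$ are $\f_p$-defined of dimension at most $\dim V-1$, and their degrees are controlled by a Bezout bound exactly as in Proposition~\ref{p:Geometric}, so their $\f_p$-point counts are $O(p^{\dim V-1})$. Inclusion--exclusion then produces the claimed $d\cdot p^{\dim V}+O(p^{\dim V-1/2})$.

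In the non-split case, $\sigma_p$ is a non-identity element of $\Aut(k)$; since the $\Aut(k)$-action on the $d$ components is regular, every cycle of $\sigma_p$ has the same length $f=\mathrm{ord}(\sigma_p)>1$, so the $\overline{W}_i$ partition into $d/f$ Frobenius orbits. For each orbit $O$, the union $V^{(O)}:=\bigcup_{i\in O}\overline{W}_i$ is defined over $\f_p$, its geometric components are defined over $\f_{p^f}$, and $\Aut(\f_{p^f}/\f_p)$ acts simply transitively on them. Lemma~\ref{l:PointsEmpty} then yields $\#V^{(O)}(\f_p)=O(p^{\dim V-1})$, and summing over the $d/f\le d$ orbits gives $\#V(\f_p)=O(p^{\dim V-1})$.

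The main obstacle I anticipate is the uniformity bookkeeping: verifying that the implied constants factor only through $n$, $\deg(k)$, and the geometric degree and dimension of $V$. This reduces to bounding the degrees of the orbit subvarieties $V^{(O)}$ and the pairwise intersections $\overline{W}_i\cap \overline{W}_j$ in terms of these invariants (via Bezout, exactly as in Proposition~\ref{p:Geometric}), together with checking that the cited uniform forms of Lang--Weil and of Lemma~\ref{l:PointsEmpty} depend on precisely these quantities. These verifications are routine once the Galois-theoretic dichotomy between split and non-split primes is in place.
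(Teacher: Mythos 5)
Your proof is correct and takes essentially the same route as the paper's: the paper groups the reduced geometric components by the primes $\pfr\mid p$ of $k$, forming $V_\pfr=\bigcup_{\sigma\in\Aut(\f_\pfr)}W_\pfr^\sigma$, which are precisely your Frobenius-orbit unions $V^{(O)}$ under the standard dictionary between primes above $p$ and orbits of $\sigma_p$; both proofs then apply Lemma~\ref{l:PointsEmpty} to each packet in the non-split case and Lang--Weil in the split case, with \cite[Lemma 3.1]{FHM} absorbing the lower-dimensional overlaps.
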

\begin{proof} Let $W$ be an irreducible component of $V$; then, by the assumption, we have 
$
V=\bigcup_{\sigma\in\Aut(k)} W^{\sigma},
$
$W^{\sigma}$'s are distinct irreducible components of $V$ and
\[
V_p=\bigcup_{\pfr| p}\bigcup_{\sigma\in\Aut(\f_{\pfr})} W_{\pfr}^{\sigma},
\]
where $W_{\pfr}$ is defined by defining relations of $W$ modulo $\pfr$ (it is well-defined for almost all $\pfr$). Let $V_{\pfr}=\bigcup_{\sigma\in\Aut(\f_{\pfr})} W_{\pfr}^{\sigma}$. By N\"{o}ether-Bertini's theorem, $W_{\pfr}$ is irreducible for almost all $\pfr$. In particular,
\[
\dim(V_{\pfr_1}\cap V_{\pfr_2})\le \dim V-1,
\]
for $\pfr_1\neq \pfr_2$ and the degree of this variety has an upper bound which only depends on the degree and the dimension of $V$. Hence, by~\cite[Lemma 3.1]{FHM}, 
\[
\#V_p(\f_p)=\sum_{\pfr|p} \#V_{\pfr}(\f_p)+O(p^{\dim V-1}),
\]
where the constant just depends on the degree and the dimension of $V$. By Lemma~\ref{l:PointsEmpty}, $\#V_{\pfr}(\f_p)=O(p^{\dim V-1})$ unless $p$ completely splits over $k$. If $p$ completely splits over $k$, $V_{\pfr}$ is an irreducible variety. Thus, by Lang-Weil~\cite{LW}, we have that 
\[
\#V_{\pfr}(\f_p)=p^{\dim V}+O(p^{\dim V-\frac{1}{2}}),
\]
where the constant depends on $n$ and the degree and the dimension of $V$, which completes the proof. 
\end{proof}
\begin{cor}\label{c:PointsArbitrary}
Let $V$ be a closed subset of $\bba^n_{\bbq}$. Then there are number fields $k_i$'s such that
\[
\#V(\f_p)=\left(\sum_{p\text{ completely splits}/k_i} \deg(k_i)\right)\h p^{\dim V}+O(p^{\dim V-\frac{1}{2}}),
\]
where the constant depends only on $n$, the geometric degree and the geometric dimension of $V$. Moreover 
$
\sum_i \deg k_i
$
is at most the number of geometric irreducible components of $V$.
\end{cor}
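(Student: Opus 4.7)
The plan is to reduce the corollary to Proposition~\ref{p:PointsQIrreducible} by decomposing $V$ into its $\bbq$-irreducible components and applying the proposition to each top-dimensional piece.

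First I would write $V = V^{(1)} \cup \cdots \cup V^{(s)}$ as the union of its $\bbq$-irreducible components. Distinct components intersect in closed subsets of strictly smaller dimension whose degrees are controlled by the geometric degree and dimension of $V$, so inclusion--exclusion together with~\cite[Lemma~3.1]{FHM} shows that the contribution of these intersections, as well as of any $V^{(j)}$ with $\dim V^{(j)} < \dim V$, is $O(p^{\dim V - 1})$ and can be absorbed into the error term.

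For each top-dimensional $\bbq$-irreducible $V^{(j)}$, the group $\mathrm{Gal}(\overline{\bbq}/\bbq)$ acts transitively on the set of geometric irreducible components of $V^{(j)}$. Picking a representative $W_j$ and letting $k_j$ be its field of definition (the fixed field of the stabilizer), the hypotheses of Proposition~\ref{p:PointsQIrreducible} are met with $k = k_j$, and $[k_j:\bbq]$ equals the number of geometric components of $V^{(j)}$. The proposition yields
\[
\#V^{(j)}(\f_p) = \begin{cases}\deg(k_j)\,p^{\dim V}+O(p^{\dim V-1/2}) & \text{if $p$ splits completely in $k_j$,}\\ O(p^{\dim V-1}) & \text{otherwise,}\end{cases}
\]
and summing these over the top-dimensional components gives the asymptotic of the corollary. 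The bound $\sum_j \deg(k_j) \le$ (number of geometric irreducible components of $V$) is then immediate, since each top-dimensional $V^{(j)}$ accounts for exactly $\deg(k_j)$ distinct geometric components.

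The main subtlety is that Proposition~\ref{p:PointsQIrreducible} is phrased assuming $\Aut(k)$ acts simply transitively on the geometric components, which forces $k/\bbq$ to be Galois. When the field of definition $k_j$ is not Galois, one either passes to the Galois closure and reorganizes the orbit, or inspects the proof of the proposition directly: the Noether--Bertini/Lang--Weil step uses only that each residue-degree-one prime of $k_j$ above $p$ contributes a geometrically irreducible $\f_p$-variety with $p^{\dim V}+O(p^{\dim V-1/2})$ points, while the contribution from higher-degree primes is $O(p^{\dim V-1})$ via Lemma~\ref{l:PointsEmpty}. This argument is insensitive to whether $k_j/\bbq$ is Galois, and it is precisely the source of the error exponent $\dim V - 1/2$ claimed in the corollary.
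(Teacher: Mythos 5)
Your approach matches the paper's one-line proof exactly: decompose $V$ into $\bbq$-irreducible components, discard intersections and lower-dimensional pieces via \cite[Lemma 3.1]{FHM}, and apply Proposition~\ref{p:PointsQIrreducible} to each remaining top-dimensional piece $V^{(j)}$. You are also right to flag the Galois subtlety, and it is sharper than your phrasing suggests: if $k_j$ is not Galois over $\bbq$ then \emph{both} hypotheses of the proposition fail, not just the ``simply transitive'' one --- the conjugate components $W_j^{\sigma}$ are defined over $\sigma(k_j)\neq k_j$ rather than over $k_j$, and $|\Aut(k_j)|<[k_j:\bbq]$ so $\Aut(k_j)$ cannot even act transitively. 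Passing to the Galois closure does not help, since there the automorphism group acts transitively but not simply transitively.

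The genuine gap is in your final paragraph, where you treat the two formulas as interchangeable. The asymptotic you correctly extract from Noether--Bertini plus Lang--Weil is
$\#V^{(j)}(\f_p)= \bigl(\#\{\pfr\mid p \text{ in }k_j:\ f(\pfr/p)=1\}\bigr)\,p^{\dim V}+O(p^{\dim V-1/2})$,
and once $k_j$ is non-Galois this is \emph{not} equal to $\deg(k_j)\cdot p^{\dim V}$ when $p$ splits completely and $O(p^{\dim V-1/2})$ otherwise. Concretely, take $V=\{x^3=2y^3\}\subset\bba^2_{\bbq}$, a union of three lines with $k_1=\bbq(2^{1/3})$: for every prime $p\equiv 2\pmod 3$ exactly one line is $\f_p$-rational, so $\#V(\f_p)=p+O(1)$, yet no such $p$ splits completely in $k_1$, so the completely-split formula would predict $O(p^{1/2})$. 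The discrepancy is $\Theta(p^{\dim V})$ on a positive-density set of primes and cannot be absorbed into the error term, nor repaired by a different choice of number fields, since an integer combination of complete-splitting indicators only takes the values $0$ and $\sum\deg(k_i)$ on a $\bbq$-irreducible piece while the degree-one-prime count takes intermediate values. Your argument therefore proves a (correct, and for Lemma~\ref{l:SieveDim} entirely sufficient) degree-one-prime version of the statement, but not the corollary as literally written; you need to either state and prove that corrected version, or restrict to the situation where all the fields of definition are Galois.
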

\begin{proof}
This is a direct corollary of Proposition~\ref{p:PointsQIrreducible} and \cite[Lemma 3.1]{FHM}.
\end{proof}
\begin{cor}\label{c:PointsV}
In the above setting, there exists a positive integer $M$ depending only on the degree of $\tilde{f}$ such that one can find number fields $k_i$'s where $\sum_i\deg k_i\le M$ and such that for almost all $p$,
\[
\#V(f)(\f_p)=\left(\sum_{p\text{ completely splits}/k_i} \deg(k_i)\right)\h p^{\dim \bbg-1}+O(p^{\dim \bbg-\frac{3}{2}}).
\]
\end{cor}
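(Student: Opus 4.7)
The plan is to obtain Corollary~\ref{c:PointsV} as a direct composition of Proposition~\ref{p:Geometric} with Corollary~\ref{c:PointsArbitrary}, applied to the closed subvariety $V=V(f)\subset \bbg\subset \bba^{N^2}_{\bbq}$.

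First I would check the dimension claim. Since $\bbg$ is assumed perfect and generated by its 1-parameter unipotent subgroups, we have $\bbg\simeq \bbg_{ss}\ltimes R_u(\bbg)$ with $\bbg_{ss}$ Zariski-connected and simply connected, so $\bbg$ is itself Zariski-irreducible. Combined with the standing hypothesis that $f$ is not identically zero on any irreducible component of $\bbg$, this gives $\dim V(f)=\dim\bbg-1$, which accounts for the exponents $p^{\dim\bbg-1}$ and $p^{\dim\bbg-\frac{3}{2}}$ appearing in the statement.

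Next, Proposition~\ref{p:Geometric} supplies a positive integer $M$, depending only on $\deg\tilde{f}$, which bounds the number of geometric irreducible components of $V(f)$. I would then feed $V(f)$ into Corollary~\ref{c:PointsArbitrary}: that corollary produces number fields $k_1,\ldots,k_s$ with
\[
\sum_i\deg k_i\ \le\ \#\{\text{geometric irreducible components of }V(f)\}\ \le\ M,
\]
and yields
\[
\#V(f)(\f_p)=\left(\sum_{p\text{ completely splits}/k_i}\deg(k_i)\right)p^{\dim V(f)}+O\!\left(p^{\dim V(f)-\frac{1}{2}}\right).
\]
Substituting $\dim V(f)=\dim\bbg-1$ gives the asserted formula. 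The number fields $k_i$ may depend on $f$, but the bound $\sum_i\deg k_i\le M$ depends only on $\deg\tilde{f}$, which is the content of the "uniformity" in the corollary.

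The only thing that requires a moment's care, rather than a serious obstacle, is tracking how the implied constant in the $O(\cdot)$ depends on the input: Corollary~\ref{c:PointsArbitrary} allows dependence on the ambient dimension, the geometric dimension, and the geometric degree of $V$. Here the ambient dimension $N^2$ and $\dim\bbg$ are fixed with $\Gamma$, while the generalized Bezout bound used inside Proposition~\ref{p:Geometric} shows that the geometric degree of $V(f)$ is controlled by $\deg\tilde{f}$ together with the (fixed) degrees of the defining equations of $\bbg$. Hence all the relevant quantities are under control and the conclusion follows without further work.
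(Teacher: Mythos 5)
Your proof is correct and follows the same route as the paper: combine Proposition~\ref{p:Geometric} (bounding the number of geometric irreducible components of $V(f)$ in terms of $\deg\tilde{f}$) with Corollary~\ref{c:PointsArbitrary} (the point-count formula for arbitrary closed subsets of affine space), noting that $\dim V(f)=\dim\bbg-1$ because $\bbg$ is Zariski-connected hence irreducible and $f\neq 0$. The paper dispatches this in a single line, so your extra care about the dimension computation and the dependence of the implied constant merely fills in details the paper leaves implicit.
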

\begin{proof}
This is a consequence of Proposition~\ref{p:Geometric} and Corollary~\ref{c:PointsArbitrary}.
\end{proof}
\noindent
Since we assumed that $\bbg$ is perfect and Zariski-connected, we can find a free Zariski-dense subgroup of $\Gamma$ (e.g. see~\cite{SV}). So without loss of generality, we can and will assume that $\Gamma$ is a free group.

Let us also recall that since $\Gamma$ is finitely generated, it is a subgroup of $\SL_n(\bbz_{S_0})$. Its Zariski-closure in $(\mathbb{SL}_n)_{\bbz_{S_0}}$ is denoted by $\gcal$. As we said in the introduction, whenever $\bbg$ is generated by its unipotent subgroups, the closure of $\Gamma$ in $\prod_{p\not\in S_{\Gamma}} \SL_n(\bbz_p)$ is equal to $\prod_{p\not\in S_{\Gamma}} \gcal(\bbz_p)$.

Let $\pi_d:\Gamma \rightarrow \SL_n(\bbz_{S_{\Gamma,f}}/d\bbz_{S_{\Gamma,f}})$ be the homomorphism induced by the quotient (residue) map $\pi_d:\bbz_{S_{\Gamma,f}}\rightarrow \bbz_{S_{\Gamma,f}}/d\bbz_{S_{\Gamma,f}}$ for any $d$ which is not a unit in the ring of $S_{\Gamma,f}$-integers. If $d$ is a unit in the ring of $S_{\Gamma,f}$ integers, we set $\pi_d$ to be the trivial homomorphism. Let $\Gamma(d):=\ker(\pi_d)$. By the definition it is clear that if $\pi_d(\gamma_1)=\pi_d(\gamma_2)$, then $\pi_d(f(\gamma_1))=\pi_d(f(\gamma_2))$.
\begin{lem}\label{l:Multiplicative}
Let $N_f(d)=\#\{\pi_d(\gamma)\h s.t.\h \pi_d(f(\gamma))=0\}$. Then $N_{f}(d)$ is a multiplicative function for square free integer numbers $d$.
\end{lem}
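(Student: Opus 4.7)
The idea is to deduce multiplicativity from the Chinese Remainder Theorem together with the strong approximation property for $\Gamma$ recorded as item (4) of the Nori-type properties listed in the introduction. Morally, $N_f(d)$ counts a set that is cut out by the vanishing of $f$ on the image of $\Gamma$ in $\SL_n(\bbz_{S_{\Gamma,f}}/d\bbz_{S_{\Gamma,f}})$, and for coprime $d_1,d_2$ this image and this vanishing condition both decompose as products.

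First I would fix coprime square-free positive integers $d_1,d_2$ with $d=d_1d_2$, assumed for the moment to be coprime to $S_{\Gamma}$. CRT gives a ring isomorphism
\[
\bbz_{S_{\Gamma,f}}/d\bbz_{S_{\Gamma,f}} \simeq \bbz_{S_{\Gamma,f}}/d_1\bbz_{S_{\Gamma,f}} \times \bbz_{S_{\Gamma,f}}/d_2\bbz_{S_{\Gamma,f}},
\]
inducing a corresponding product decomposition of the target $\SL_n$-groups under which $\pi_d(\gamma)$ corresponds to $(\pi_{d_1}(\gamma),\pi_{d_2}(\gamma))$. Since $f$ is a regular function with $\bbz_{S_{\Gamma,f}}$-coefficients, applying $f$ commutes with the reduction maps, so the condition $\pi_d(f(\gamma))=0$ is equivalent to the pair $\pi_{d_1}(f(\gamma))=0$ and $\pi_{d_2}(f(\gamma))=0$. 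Thus $N_f(d)$ is the number of pairs $(g_1,g_2)\in\pi_d(\Gamma)\subseteq \pi_{d_1}(\Gamma)\times\pi_{d_2}(\Gamma)$ with $f(g_i)\equiv 0\pmod{d_i}$.

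The crucial step is to argue that the containment $\pi_d(\Gamma)\subseteq \pi_{d_1}(\Gamma)\times\pi_{d_2}(\Gamma)$ is actually an equality. This is precisely the content of strong approximation: by item (4) of the introduction, $\prod_{p\not\in S_{\Gamma}}\gcal(\bbz_p)$ lies in the closure of $\Gamma$ in $\prod_{p\not\in S_0}\SL_n(\bbz_p)$, so reduction mod $d$ (together with item (3), $\pi_p(\Gamma)=\gcal_p(\f_p)$ for $p\not\in S_{\Gamma}$) shows that $\pi_d(\Gamma)=\prod_{p\mid d}\gcal_p(\f_p)$, which regroups via CRT as $\pi_{d_1}(\Gamma)\times\pi_{d_2}(\Gamma)$. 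Once the image factors as a product, the set $\{g\in\pi_d(\Gamma)\mid f(g)\equiv 0\pmod d\}$ is the Cartesian product of the analogous sets at $d_1$ and $d_2$, and so $N_f(d)=N_f(d_1)\,N_f(d_2)$.

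The bookkeeping obstacle is the handling of bad primes. Nori's strong approximation only applies for $d$ coprime to $S_{\Gamma}$, so strictly speaking one must read the lemma as multiplicativity on square-free $d$ whose prime divisors avoid a fixed finite bad set (containing $S_{\Gamma}$). At the opposite extreme, if every prime factor of $d$ lies in $S_{\Gamma,f}$ then $d$ is a unit in $\bbz_{S_{\Gamma,f}}$ and $\pi_d$ is trivial by convention, forcing $N_f(d)=1$ and making multiplicativity automatic in that range. The mixed case splits $d$ into its two parts along these lines, and the nontrivial part falls under the argument above.
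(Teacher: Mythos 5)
Your proof is correct and uses essentially the same ingredients as the paper: the CRT factorization of $\bbz_{S_{\Gamma,f}}/d\bbz_{S_{\Gamma,f}}$ together with Nori's strong approximation (item (4)) to upgrade the containment $\pi_d(\Gamma)\subseteq\pi_{d_1}(\Gamma)\times\pi_{d_2}(\Gamma)$ to an equality. The paper packages this slightly differently by immediately reducing $d$ to $d_{\Gamma,f}=\prod_{p\mid d,\,p\notin S_{\Gamma,f}}p$ and writing the explicit product formula $N_f(d)=\prod_{p\mid d_{\Gamma,f}}\#V(f)(\f_p)$ (the displayed isomorphism $\Gamma/\Gamma(d)\simeq\prod\gcal_p(\f_p)$), but the content is the same; one small caveat in your bookkeeping paragraph is that the lemma does hold for \emph{all} square-free $d$, not merely those avoiding a bad set, precisely because $N_f(d)=N_f(d_{\Gamma,f})$ as you observe at the end.
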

\begin{proof}
For any square-free integer $d$, let $d_{\Gamma,f}=\prod_{p|d, p\not\in S_{\Gamma,f}} p$. We notice that $\pi_d(\gamma)=\pi_{d_{\Gamma,f}}(\gamma)$ and $\bbz_{S_{\Gamma,f}}/d\bbz_{S_{\Gamma,f}}$ is isomorphic to $\bbz/d_{\Gamma,f}\bbz$. Thus $N_f(d)=N_f(d_{\Gamma,f})$.

 On the other hand, we know that the closure of $\Gamma$ in $\prod_{p\not\in S_{\Gamma}} \SL_n(\bbz_p)$ is equal to $\prod_{p\not\in S_{\Gamma}} \gcal(\bbz_p)$. Hence
\begin{equation}\label{e:FiniteQuotient}
\Gamma/\Gamma(d)\simeq \prod_{p|d_{\Gamma}} \Gamma/\Gamma(p)\simeq \prod_{p|d_{\Gamma}} \gcal_p(\f_p),
\end{equation}
where $d_{\Gamma}=\prod_{p|d, p\not\in S_{\Gamma}} p$. Therefore $N_f(d)=\prod_{p|d_{\Gamma,f}} \#V(f)(\f_p)$, which completes the proof of the Lemma. 
\end{proof}
\noindent
In order to prove Theorem~\ref{t:Perfect}, we use the combinatorial sieve formulated in~\cite{BGS}. However we need to adjust some of the definitions before we could proceed as we are working with rational numbers instead of integers. Let 
$f_{\Gamma}:\Gamma\rightarrow \bbz^+$ be the following map
\[
f_{\Gamma}(\gamma):=\prod_{p\not\in S_{\Gamma,f}}|f(\gamma)|_p^{-1}=|f(\gamma)|\prod_{p\in S_{\Gamma,f}}|f(\gamma)|_p,
\]
where $|\cdot|$ is the usual absolute value and $|\cdot|_p$ is the $p$-adic norm. We notice that $f_{\Gamma}(\gamma)\bbz_{S_{\Gamma,f}}=f(\gamma)\bbz_{S_{\Gamma,f}}$. In particular, $\pi_d(f(\gamma))=0$ if and only if $\pi_d(f_{\Gamma}(\gamma))=0$.

 Let $\Gamma$ be freely generated by $\Omega$, $d_{\Omega^{\pm 1}}(\cdot,\cdot)=d(\cdot,\cdot)$ the relative word metric, $l(\gamma)=d(I,\gamma)$ and
\[
a_n(L)=\#\{\gamma\in \Gamma|\h l(\gamma)\le L, f_{\Gamma}(\gamma)=n\}.
\]
Let $\|\gamma\|_{S_{\Gamma,f}}=\max\{\|\gamma\|, \|\gamma\|_p|\h p\in S_{\Gamma,f}\}$, where $\|\gamma\|$ (resp. $\|\gamma\|_p$) is the operating norm of $\gamma$ on $\bbr^n$ (resp. $\bbq_p^n$). It is clear that, if $l(\gamma)\le L$, then $\|\gamma\|_{S_{\Gamma,f}}$ is at most $C^{L}$, where
$C=\max\{\|\gamma\|_{S_{\Gamma,f}}|\h \gamma\in\Omega\}.$ Hence 
\begin{equation}\label{e:SNorm}
f_{\Gamma}(\gamma)\le C_f\cdot C^{\deg\tilde{f}(\#S_{\Gamma,f}+1)\cdot L},
\end{equation}
if $l(\gamma)\le L$. We also observe that if $a_n(L)\neq 0$, then $n$ has no prime factor in $S_{\Gamma,f}$.

Following the same computation as in~\cite[Pages 18-20]{BGS} and using the main result of Salehi Golsefidy and Varj\'{u}~\cite{SV}, for any square free $d$, we have that
\begin{equation}\label{e:moddSum}
\sum_{d|n} a_n(L)=\beta(d)X+ r(d,\{a_i\}),
\end{equation}
where $X=\sum_n a_n(L)$, $|r(d,\{a_i\})|\ll N_{f}(d) X^{\tau}$, $\tau<1$ is independent of the choice of the regular function $f$, and, if $d$ has no prime factor in $S_{\Gamma,f}$, then
\begin{equation}\label{e:Density}
\beta(d)=\frac{N_{f}(d)}{\#\Gamma/\Gamma(d)}.
\end{equation}
If $d$ has a prime factor in $S_{\Gamma,f}$, then $\beta(d)=0$.
\begin{lem}\label{l:DensityMultiplicative}
In the above setting, $\beta$ is multiplicative on the square-free numbers $d$. Moreover there is $c_1$ a positive real number (which may also depend on $f$) such that $\beta(p)\le 1-\frac{1}{c_1}$.
\end{lem}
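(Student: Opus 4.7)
The plan has two independent parts: multiplicativity of $\beta$ on square-free integers, and a uniform upper bound on $\beta(p)$ bounded away from $1$.

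For multiplicativity, let $d=d_1d_2$ with $d_1,d_2$ coprime square-free positive integers. If either $d_i$ has a prime factor in $S_{\Gamma,f}$, then so does $d$, and both $\beta(d)$ and $\beta(d_1)\beta(d_2)$ equal $0$ by definition. Otherwise both $d_1,d_2$ are coprime to $S_{\Gamma,f}$, so $d_{i,\Gamma,f}=d_i$, and the isomorphism (\ref{e:FiniteQuotient}) coming from Nori's theorem gives $\#\Gamma/\Gamma(d)=\#\Gamma/\Gamma(d_1)\cdot\#\Gamma/\Gamma(d_2)$, while Lemma~\ref{l:Multiplicative} gives $N_f(d)=N_f(d_1)N_f(d_2)$. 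Dividing yields $\beta(d)=\beta(d_1)\beta(d_2)$. So multiplicativity reduces entirely to the two supplied multiplicativity statements.

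For the uniform bound $\beta(p)\le 1-1/c_1$, the key idea is to split into asymptotic and exceptional regimes. If $p\in S_{\Gamma,f}$ then $\beta(p)=0$, so assume $p\notin S_{\Gamma,f}$. By Nori's theorem, for $p\notin S_\Gamma$ the reduction $\gcal_p$ is smooth and geometrically irreducible of dimension $\dim\bbg$, so Lang-Weil gives
\[
\#\gcal_p(\f_p)=p^{\dim\bbg}+O(p^{\dim\bbg-\frac{1}{2}}),
\]
and the identification $\Gamma/\Gamma(p)\simeq\gcal_p(\f_p)$ combined with Corollary~\ref{c:PointsV} produces
\[
\beta(p)=\frac{\#V(f)(\f_p)}{\#\gcal_p(\f_p)}=O\!\left(\frac{1}{p}\right),
\]
with implied constant depending only on $\deg\tilde f$ and $\Gamma$. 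Hence there is an explicit threshold $p_0$ such that for every $p\ge p_0$ with $p\notin S_{\Gamma,f}$ one has $\beta(p)\le 1/2$, say.

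For the finitely many primes $p<p_0$ with $p\notin S_{\Gamma,f}$, I will invoke the definition of ramified prime directly: since $p\notin S_{\Gamma,f}$, there exists $\gamma\in\Gamma$ with $f(\gamma)\notin p\bbz_S$, equivalently $\pi_p(f(\gamma))\neq 0$. Thus $\pi_p(\gamma)\in\Gamma/\Gamma(p)$ is not counted in $N_f(p)$, so $N_f(p)<\#\Gamma/\Gamma(p)$ and $\beta(p)<1$ strictly. Taking $c_1:=\max\{2,\,(1-\beta(p))^{-1}:p<p_0,\,p\notin S_{\Gamma,f}\}$ gives a single constant (depending on $\Gamma$ and $f$) that works for every prime $p$. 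The main (mild) obstacle is just making sure the transition between the asymptotic Lang-Weil regime and the finite exceptional set is handled cleanly, which is why the constant $c_1$ must be allowed to depend on $f$ through the threshold $p_0$ and through the finitely many small-prime values $\beta(p)$.
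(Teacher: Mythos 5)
Your proof is correct and follows essentially the same route as the paper's: multiplicativity comes from combining Lemma~\ref{l:Multiplicative} with the product decomposition in~(\ref{e:FiniteQuotient}), and the bound $\beta(p)\le 1-1/c_1$ comes from the $O(1/p)$ estimate of Corollary~\ref{c:PointsV} for large $p$ together with the observation that $\beta(p)<1$ strictly for every unramified $p$. The paper states this more tersely but the content is identical; the only difference is that you spell out the threshold $p_0$ and the finite max, which the paper leaves implicit.
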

\begin{proof}
By Equations (\ref{e:FiniteQuotient}) and (\ref{e:Density}), and the proof of Lemma~\ref{l:Multiplicative}, we have that
\[
\beta(d)=\prod_{p|d} \frac{\# V(f)(\f_p)}{\#\gcal_p(\f_p)}
\]
if $d$ does not have a prime factor in $S_{\Gamma,f}$ and $\beta(d)=0$ if $d$ has a prime factor in $S_{\Gamma,f}$. Hence $\beta$ is a multiplicative function for square-free integers and, for almost all $p$, by Corollary~\ref{c:PointsV}, we have that
$
\beta(p)\le M/p,
$
where $M$ just depends on the degree of $\tilde{f}$. We also notice that $\beta(p)<1$ for any $p$ from which the Lemma follows.
\end{proof}
\begin{lem}\label{l:LevelDistribution}
In the above setting, for any positive integer number $D$ and any positive number $\vare$, we have
\[
\sum_{d\le D} |r(d,\{a_i\})|\ll X^{\tau} D^{\dim\bbg+\vare},
\]
where the constant only depends on $\vare$ and the degree of $\tilde{f}$.
\end{lem}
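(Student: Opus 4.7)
The plan is to reduce the estimate to counting points on $V(f)$ modulo primes and then to a routine divisor-function estimate. From the equation \eqref{e:moddSum} we already know
$|r(d,\{a_i\})|\ll N_f(d)\,X^{\tau}$, so it is enough to prove
\[
\sum_{d\le D}N_f(d)\ll D^{\dim\bbg+\vare},
\]
where $d$ ranges over square-free positive integers (these are the only $d$ the Brun sieve uses). Everything else in the lemma is a direct consequence of this bound.

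First, by Lemma~\ref{l:Multiplicative} and its proof, $N_f$ is multiplicative on square-free $d$, and
\[
N_f(d)=\prod_{p\mid d,\ p\notin S_{\Gamma,f}}\#V(f)(\f_p).
\]
Then I would invoke Corollary~\ref{c:PointsV}: there exists $M$ depending only on $\deg\tilde f$ such that for all primes $p$ outside a finite exceptional set $T$ (determined by $f$ and $\Gamma$) one has
$\#V(f)(\f_p)\le M\,p^{\dim\bbg-1}$. For the finitely many primes in $T\cup S_{\Gamma,f}$ the trivial bound $\#V(f)(\f_p)\ll p^{\dim \bbg}$ is available, and these primes contribute only a multiplicative constant (part of the ``setting''). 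Consequently, for every square-free $d$,
\[
N_f(d)\ll M^{\omega(d)}\,d^{\dim\bbg-1},
\]
where $\omega(d)$ denotes the number of distinct prime factors of $d$ and the implied constant depends only on $\deg\tilde f$, $\Gamma$, and $\#S_{\Gamma,f}$.

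Summing, I would write
\[
\sum_{d\le D}N_f(d)\ll D^{\dim\bbg-1}\sum_{d\le D}M^{\omega(d)}.
\]
The inner sum is bounded by the classical estimate $\sum_{d\le D}M^{\omega(d)}\le \sum_{d\le D}\tau_{\lceil M\rceil}(d)\ll_{M}D(\log D)^{M-1}\ll_{M,\vare}D^{1+\vare}$, where $\tau_k$ is the $k$-fold divisor function. Combining these yields
$\sum_{d\le D}N_f(d)\ll D^{\dim\bbg+\vare}$, and multiplying by $X^{\tau}$ gives the lemma, with the $D$-dependence of the constant depending only on $\vare$ and $\deg\tilde f$, as required.

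I do not expect a serious obstacle: the geometric input (Corollary~\ref{c:PointsV}) already does the real work of bounding $\#V(f)(\f_p)$ uniformly in $f$ in terms of $\deg\tilde f$, and what remains is a standard divisor-sum manipulation. The only point where care is needed is confirming that the exceptional primes in Corollary~\ref{c:PointsV} and the primes in $S_{\Gamma,f}$ are absorbed into a bounded multiplicative constant rather than inflating the exponent of $D$; this is handled by the bound $M^{\omega(d)}\ll_\vare d^{\vare}$ together with the trivial estimate at the finitely many exceptional primes.
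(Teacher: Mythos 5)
Your proof is correct and follows essentially the same route as the paper: both reduce via $|r(d,\{a_i\})|\ll N_f(d)X^\tau$, use multiplicativity of $N_f$ together with the point count $N_f(p)\ll p^{\dim\bbg-1}$ from Corollary~\ref{c:PointsV}, and then control the remaining $C^{\omega(d)}$ factor. The only cosmetic difference is that the paper applies the pointwise bound $C^{\omega(d)}\ll_\vare d^{\vare}$ to get $N_f(d)\ll d^{\dim\bbg-1+\vare}$ directly, whereas you average via the divisor-sum estimate $\sum_{d\le D}M^{\omega(d)}\ll_\vare D^{1+\vare}$; the two are interchangeable, and you already note the pointwise variant yourself.
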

\begin{proof}
By the above discussion, we know that $|r(d,\{a_i\})|\ll N_{f}(d) X^{\tau}$. On the other hand, 
\[
N_{f}(d)=\prod_{p|d_{\Gamma,f}} N_{f}(p)\le \prod_{p|d_{\Gamma,f}}C p^{\dim \bbg-1},
\]
where $C$ only depends on the degree of $\tilde{f}$. Thus
\[
N_{f}(d)\le C' d^{\dim\bbg-1+\vare}, 
\]
where $C'$ only depends on $\vare$ and the degree of $\tilde{f}$.  Hence 
\[
\sum_{d\le D} |r(d,\{a_i\})|\ll X^{\tau} D^{\dim\bbg+\vare},
\]
as we wished.
\end{proof}
\begin{lem}\label{l:SieveDim}
In the above setting, there are constants $T$, $c_f$ and $t_f$ such that 
\begin{enumerate}
\item $T$ depends on the degree of $\tilde{f}$.
\item $t_f\le T$,
\item $|\sum_{w\le p\le z} \beta(p)\log p- t_{f} \log \frac{z}{w}|\le c_{f},$ for $\max S_{\Gamma,f}=p_0<w\le z$.
\end{enumerate}
\end{lem}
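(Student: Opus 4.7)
My plan is to compute $\beta(p)$ explicitly for almost all $p$ by combining the formula in Lemma~\ref{l:DensityMultiplicative} with Corollary~\ref{c:PointsV} and the Lang-Weil count for $\gcal_p$, and then to recover the logarithmic-sum statement by applying Chebotarev's density theorem and a Mertens-type partial summation to each contributing number field.

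First, for $p \notin S_{\Gamma,f}$, Lemma~\ref{l:DensityMultiplicative} gives $\beta(p) = \#V(f)(\f_p)/\#\gcal_p(\f_p)$. The denominator equals $p^{\dim\bbg} + O(p^{\dim\bbg - 1/2})$ by Lang-Weil applied to the geometrically irreducible smooth fibers $\gcal_p$, while Corollary~\ref{c:PointsV} expresses the numerator as $\bigl(\sum_{i:\ p\text{ splits in }k_i} \deg k_i\bigr)\, p^{\dim\bbg - 1} + O(p^{\dim\bbg - 3/2})$ for a finite collection of number fields $k_1,\ldots,k_s$ with $\sum_i \deg k_i \le M$, where $M$ depends only on $\deg \tilde f$. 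The fact that each $k_i$ is Galois over $\bbq$ is forced by the simple transitivity of $\Aut(k_i)$ in Proposition~\ref{p:PointsQIrreducible}, since this requires $|\Aut(k_i)| = [k_i:\bbq]$. Dividing yields
\[
\beta(p) \;=\; \frac{1}{p} \sum_{i:\ p\text{ splits in }k_i} \deg k_i \;+\; O(p^{-3/2})
\]
for all $p$ outside a finite exceptional set depending on $f$.

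Next, I multiply by $\log p$ and sum over $w \le p \le z$ with $w > p_0 = \max S_{\Gamma,f}$ (large enough to absorb the exceptional set). The $O(p^{-3/2})$ contribution summed against $\log p$ converges, hence is $O(1)$ uniformly in $w$ and $z$. For each Galois extension $k_i/\bbq$, Chebotarev's density theorem combined with partial summation of the Mertens estimate $\sum_{p \le x,\ p\text{ splits in }k_i} (\log p)/p = (\log x)/\deg k_i + C_{k_i} + o(1)$ gives
\[
\sum_{\substack{w \le p \le z \\ p\text{ splits in }k_i}} \frac{\log p}{p} \;=\; \frac{1}{\deg k_i}\log(z/w) + O_{k_i}(1),
\]
so that the weighted $\deg k_i$'s telescope to yield
\[
\sum_{w \le p \le z} \beta(p)\log p \;=\; s\cdot \log(z/w) + O_f(1).
\]
This establishes the lemma with $t_f = s$, $T = M$ (using $t_f \le s \le \sum_i \deg k_i \le M$), and $c_f$ equal to the implied constant.

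The main subtlety is that $c_f$ genuinely depends on $f$ through the discriminants of the $k_i$ entering the Chebotarev error; only their \emph{number} is uniformly bounded by $M$, which is why $t_f$ (but not $c_f$) is controlled solely by $\deg \tilde f$. The one substantive geometric observation — verifying that the $k_i$ arising from Proposition~\ref{p:PointsQIrreducible} are Galois — is essentially immediate from simple transitivity, and thereafter the argument is standard bookkeeping built on the earlier point-counting results.
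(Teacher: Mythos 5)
Your argument is correct and follows the paper's approach exactly: express $\beta(p)$ asymptotically via Corollary~\ref{c:PointsV} together with the Lang--Weil count for $\gcal_p(\f_p)$, then apply Chebotarev plus Mertens-type partial summation to each number field $k_i$ to get $t_f = s$ (the number of fields) and $T = M$ (the uniform bound on $\sum_i \deg k_i$), matching the paper. One small caveat worth flagging: your justification that simple transitivity of $\Aut(k_i)$ forces $k_i$ to be Galois is not airtight as stated (simple transitivity only gives $|\Aut(k_i)|$ equal to the number of components, not $[k_i:\bbq]$), but this does not affect the lemma --- for an arbitrary number field $k_i$ the density of completely split primes is $1/[\tilde k_i:\bbq]$ with $[\tilde k_i:\bbq]\ge \deg k_i$, so the Chebotarev step yields $t_f = \sum_i \deg k_i/[\tilde k_i:\bbq]\le \sum_i \deg k_i \le M$, and the stated bound $t_f\le T$ with $T$ depending only on $\deg\tilde f$ survives unchanged.
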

\begin{proof}
By Equation~(\ref{e:Density}) and Corollary~\ref{c:PointsV}, 
\[
\begin{array}{rl}
\sum_{p_0\le p\le z} \beta(p) \log p=&\sum_{p_0\le p\le z} \left(\sum_{p\text{ completely splits}/k_i} \deg(k_i)\right) \frac{\log p}{p}+O(1)\\
&\\
=&\sum_i \deg(k_i)\sum_{p_0\le p\le z\& c.s./k_i} \frac{\log p}{p}+O(1).
\end{array}
\]
where the constant and $k_i$'s depend on $f$ and $\sum_i \deg(k_i)$ has an upper bound which only depends on $\deg \tilde{f}$.
By Chebotarev's density  theorem, we have
\[
\sum_{p_0\le p\le z\& c.s./k_i} \frac{\log p}{p}=\frac{1}{\deg k_i} \log z+O(1).
\]
Hence
\[
\sum_{p_0\le p\le z} \beta(p) \log p=\sum_i \log z+ O(1)=t_{f} \log z+O(1),
\]
where $t_f\le T$ for some $T$ which depends only on the degree of $\tilde{f}$. We also notice that the implied constants might depend on $f$ but they do not depend on $w$ and $z$, which finishes the proof.
\end{proof}
\begin{prop}\label{p:NumberAlmostPrime}
In the above setting, there is a positive number $T$ depending on the degree of $\tilde{f}$ such that for $z=X^{(1-\tau)/9T(\dim\bbg+1)}$ and large enough $L$ we have
\[
\frac{X}{(\log X)^{t_{f}}}\ll \sum_{\Pi(n)\cap [1,z]=\varnothing} a_n(L) \ll \frac{X}{(\log X)^{t_{f}}},
\]
where $t_{f}\le T$ and the implied constants depend on $f$ and $\Gamma$.
\end{prop}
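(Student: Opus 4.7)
The plan is to recognize the left-hand side as the sifted sum $S(\acal, z) := \sum_{(n,P(z))=1} a_n(L)$ (with $P(z) = \prod_{p \le z} p$) attached to the sequence $\acal = \{a_n(L)\}_n$, and then invoke the Rosser--Iwaniec combinatorial sieve in exactly the form deployed in~\cite{BGS}. The four standing hypotheses of that sieve---approximate congruence counting, multiplicativity of the density together with a pointwise upper bound, a sieve-dimension estimate, and a level of distribution---have all been assembled in the lemmas immediately preceding the statement.

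Concretely, (\ref{e:moddSum}) supplies $\sum_{d\mid n} a_n(L) = \beta(d) X + r(d, \{a_i\})$; Lemma~\ref{l:DensityMultiplicative} gives multiplicativity of $\beta$ on squarefrees together with $\beta(p)\le 1-c_1^{-1}$, and via Corollary~\ref{c:PointsV} also $\beta(p)\le M/p$ for almost all $p$ with $M$ depending only on $\deg\tilde f$; Lemma~\ref{l:SieveDim} provides the sieve-dimension identity $\sum_{w\le p\le z}\beta(p)\log p = t_f\log(z/w)+O(1)$ with $t_f\le T$; and Lemma~\ref{l:LevelDistribution} yields $\sum_{d\le D}|r(d,\{a_i\})| \ll X^{\tau} D^{\dim\bbg+\varepsilon}$.

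I would then choose $D = X^{(1-\tau)/(\dim\bbg+1)}$, so Lemma~\ref{l:LevelDistribution} makes the accumulated remainder $o(X/(\log X)^{T})$, and set $z = X^{(1-\tau)/9T(\dim\bbg+1)} = D^{1/9T}$ so that the sieve parameter $s := \log D/\log z = 9T$ comfortably exceeds $2t_f$. For such $s$ the Rosser--Iwaniec sieve functions $f(s), F(s)$ are both strictly positive, giving
\[
f(s)\, V(z)\, X + O\!\Bigl(\sum_{d\le D}|r(d,\{a_i\})|\Bigr) \le S(\acal,z) \le F(s)\, V(z)\, X + O\!\Bigl(\sum_{d\le D}|r(d,\{a_i\})|\Bigr),
\]
where $V(z) = \prod_{p\le z,\, p\notin S_{\Gamma,f}}(1-\beta(p))$. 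Partial summation on Lemma~\ref{l:SieveDim} together with a Mertens-type identity converts the sieve-dimension estimate into $V(z) \asymp (\log z)^{-t_f} \asymp (\log X)^{-t_f}$, and combining this with the remainder bound delivers the two-sided asymptotic asserted in the Proposition.

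The one genuine delicacy---and the source of the awkward factor $9T(\dim\bbg+1)$ in the exponent defining $z$---is that while the sieve dimension $t_f$ itself depends on $f$, only its upper bound $T$ is uniform in $\deg\tilde f$. The exponent in $z$ must therefore be calibrated using $T$ alone, yet kept small enough that $s = \log D/\log z$ clears the Rosser--Iwaniec threshold $2T\ge 2t_f$ uniformly in $f$; without this uniform clearance the lower bound would fail. Once the calibration is in place the rest is a textbook application of the combinatorial sieve.
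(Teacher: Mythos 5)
Your proposal follows exactly the same route as the paper: both invoke the combinatorial sieve formulated in~\cite{BGS}, with Lemma~\ref{l:DensityMultiplicative}, Lemma~\ref{l:LevelDistribution} and Lemma~\ref{l:SieveDim} serving as the sieve hypotheses (multiplicativity and pointwise bound on $\beta$, level of distribution, and sieve dimension). The paper's proof is in fact a one-line citation to these inputs; your more detailed account---including the choice $D=X^{(1-\tau)/(\dim\bbg+1)}$, $z=D^{1/9T}$ with $s=\log D/\log z=9T>2t_f$, and the observation that the exponent must be calibrated by the uniform bound $T$ rather than the $f$-dependent $t_f$---correctly fills in what that citation suppresses.
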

\begin{proof}
This is a direct consequence of the formulation of a combinatorial sieve given in~\cite{BGS}, Lemma~\ref{l:DensityMultiplicative}, Lemma~\ref{l:LevelDistribution} and Lemma~\ref{l:SieveDim}.
\end{proof}
\begin{cor}\label{c:AlmostPrime}
In the above setting, there are constants $r$ and $T$ such that
\begin{enumerate}
\item $r$ only depends on $\deg \tilde{f}$, $\#S_{\Gamma,f}$ and $\Gamma$.
\item $T$ only depends on $\deg \tilde{f}$.
\item For large enough $L$ (depending on $f$), we have
\[
\frac{X}{(\log X)^{T}}\ll \#\{\gamma\in\Gamma|\h l(\gamma)\le L,\h f_{\Gamma}(\gamma)\h \text{has at most $r$ prime factors}\}.
\]
\end{enumerate}
\end{cor}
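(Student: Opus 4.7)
The plan is to deduce the corollary from Proposition~\ref{p:NumberAlmostPrime} by the classical observation that if a positive integer is bounded above and all of its prime factors exceed $z$, then the number of those prime factors (counted with multiplicity) is at most $\log(\text{bound})/\log z$. Concretely, one only needs $\log z$ to grow linearly in $L$, which will follow from the free-group word growth of $\Gamma$ together with the explicit choice of $z$ in the proposition.

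First, I would apply Proposition~\ref{p:NumberAlmostPrime} with its distinguished choice $z=X^{(1-\tau)/9T(\dim\bbg+1)}$, yielding
\[
\sum_{\Pi(n)\cap[1,z]=\varnothing} a_n(L)\gg \frac{X}{(\log X)^{t_f}}.
\]
By Lemma~\ref{l:SieveDim} we have $t_f\le T$ with $T$ depending only on $\deg\tilde{f}$, so once $L$ is large enough that $\log X\ge 1$ the right-hand side is $\gg X/(\log X)^T$, giving the lower bound in item (3).

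It then remains to argue that every $\gamma$ contributing to the sum above has at most $r$ prime factors, with $r$ of the advertised shape. By (\ref{e:SNorm}) such a $\gamma$ satisfies $f_\Gamma(\gamma)\le C_f\cdot C^{\deg\tilde{f}(\#S_{\Gamma,f}+1)L}$, and since all prime factors of $f_\Gamma(\gamma)$ exceed $z$,
\[
\omega(f_\Gamma(\gamma))\le \frac{\log f_\Gamma(\gamma)}{\log z}\le \frac{\log C_f+\deg\tilde{f}\cdot(\#S_{\Gamma,f}+1)\cdot L\cdot\log C}{\log z}.
\]
Since $\Gamma$ is free on the finite set $\Omega$, counting words gives $X\asymp_{\Gamma}|\Omega^{\pm1}|^L$, whence $\log X\asymp_{\Gamma}L$; plugging into the definition of $z$ yields $\log z\ge c_{\Gamma}L$ with $c_\Gamma>0$ depending on $\Gamma$, $\dim\bbg$, and $T$. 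Substituting, the ratio above is bounded by a constant $r$ depending only on $\deg\tilde{f}$, $\#S_{\Gamma,f}$, and $\Gamma$, which is precisely the dependence claimed.

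I do not anticipate any genuine obstacle here: Proposition~\ref{p:NumberAlmostPrime} already encodes the sieve, (\ref{e:SNorm}) controls the archimedean and $S_{\Gamma,f}$-adic size of $f(\gamma)$, and the corollary is essentially a bookkeeping exercise matching the various exponential dependencies on $L$ on both sides of $\omega(n)\le \log n/\log z$ so that $r$ inherits exactly the stated parameters.
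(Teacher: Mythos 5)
Your argument is exactly the paper's proof: apply Proposition~\ref{p:NumberAlmostPrime} with its choice of $z$, use (\ref{e:SNorm}) to bound $f_\Gamma(\gamma)$ from above, and divide $\log f_\Gamma(\gamma)$ by $\log z$ while noting $\log X\asymp L$ (free-group word growth) to see the ratio stabilizes to an $r$ of the stated shape. No differences in substance.
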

\begin{proof}
 If $\gamma$ contributes to the sum in Proposition~\ref{p:NumberAlmostPrime}, then any prime factor of $f_{\Gamma}(\gamma)$ is larger than $X^{(1-\tau)/9T(\dim\bbg+1)}$. On the other hand, by (\ref{e:SNorm}), we have that $f_{\Gamma}(\gamma)\le C_f\cdot C^{\deg\tilde{f}(\#S_{\Gamma,f}+1)\cdot L}$. Thus the number of such factors is at most
\[
\frac{9T(\log C_f+L(\#S_{\Gamma,f}+1) \deg \tilde{f}\cdot \log M_0)(\dim\bbg+1)}{(1-\tau)(L+1)\log\#\Omega}.
\]
So, for large enough $L$, the number of prime factors is at most
\[
r=\left\lfloor\frac{9(\#S_{\Gamma,f}+1) \deg\tilde{f}\cdot T\cdot(\dim\bbg+1)\cdot \log M_0 }{(1-\tau)\log\#\Omega}\right\rfloor+1,
\]
which finishes the proof of the Corollary.
\end{proof}
\begin{proof}[Proof of Theorem~\ref{t:Perfect}]
This is now a direct consequence of \cite[Proposition 3.2]{BGS} and Corollary~\ref{c:AlmostPrime}.
\end{proof}
%%%%%%%%%%%%%%%%%%%%%%%%%%%%%%%%%%%%%%%%%%%%%%%%
\section{The perfect case II.}
In this section, we prove Theorem~\ref{t:Perfect2}. We assume that the Zariski-closure of $\Gamma$ is perfect and Zariski-connected. 

\begin{lem}\label{l:ReductionSC}It is enough to prove Theorem~\ref{t:Perfect2} when the semisimple part of $\bbg$ is simply connected.
\end{lem}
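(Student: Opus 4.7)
The plan is the standard reduction to a simply-connected cover, combined with a finite bookkeeping over the cohomological obstruction to lifting. Since $\bbg$ is Zariski-connected and perfect, $\bbg\simeq \bbg_{ss}\ltimes R_u(\bbg)$. Let $\pi_{ss}:\widetilde{\bbg}_{ss}\to \bbg_{ss}$ be the simply-connected central $\bbq$-isogeny, with finite kernel $Z$. Pulling the action of $\bbg_{ss}$ on $R_u(\bbg)$ back along $\pi_{ss}$ gives an action of $\widetilde{\bbg}_{ss}$ on $R_u(\bbg)$ that is trivial on $Z$, yielding $\widetilde{\bbg}:=\widetilde{\bbg}_{ss}\ltimes R_u(\bbg)$ and a central $\bbq$-isogeny $\pi:\widetilde{\bbg}\to \bbg$ with kernel $Z$; by construction the semisimple part of $\widetilde{\bbg}$ is simply connected.

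Next I would lift a finite-index subgroup of $\Gamma$ to $\widetilde{\bbg}(\bbq)$. Because $Z$ is central, the connecting map $\delta:\bbg(\bbq)\to H^1(\bbq,Z)$ is a group homomorphism. Since $\Gamma\subseteq \gcal(\bbz_{S_0})$, the image $\delta(\Gamma)$ lies in the finite subgroup of cohomology classes unramified outside a finite enlargement of $S_0$, so $\Gamma_0:=\Gamma\cap \ker\delta$ has finite index in $\Gamma$. Lifting a generating set of $\Gamma_0$ produces a finitely generated $\widetilde{\Gamma}\subseteq \widetilde{\bbg}(\bbq)$ with $\pi(\widetilde{\Gamma})=\Gamma_0$ Zariski-dense in $\bbg$; since $\widetilde{\bbg}$ is Zariski-connected and $\pi$ is an isogeny, $\widetilde{\Gamma}$ is Zariski-dense in $\widetilde{\bbg}$. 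The analogous cohomological argument applied to $\bbz_{S'}$-points furnishes finitely many coset representatives $g_1,\ldots,g_N$ for $\gcal(\bbz_{S'})/\pi\bigl(\widetilde{\gcal}(\bbz_{\widetilde{S}'})\bigr)$ over a suitable enlargement $\widetilde{S}'\supseteq S'$.

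Finally I would transport the functional data. For each $\alpha$, set $\widetilde{f}_i^{(\alpha)}:=(L_{g_\alpha}^* f_i)\circ \pi\in \bbq[\widetilde{\bbg}]$; these are well-defined $\bbq$-regular functions because $f_i\circ \pi$ is $Z$-invariant, and since $\pi$ is dominant each family $\{\widetilde{f}_i^{(\alpha)}\}_{i=1}^m$ remains $\bbq$-linearly independent. Any $g\in \gcal(\bbz_{S'})$ factors as $g=g_\alpha\,\pi(\widetilde{g})$ with $\widetilde{g}\in \widetilde{\gcal}(\bbz_{\widetilde{S}'})$, and a direct computation yields $f_{\vbf,g}\circ \pi=\widetilde{f}_{\vbf,\widetilde{g}}^{(\alpha)}$ in the notation of Theorem~\ref{t:Perfect2}. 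Applying the simply-connected case of Theorem~\ref{t:Perfect2} to $\widetilde{\bbg}$, $\widetilde{\Gamma}$, $\widetilde{S}'$, and each family $\{\widetilde{f}_i^{(\alpha)}\}_i$, then taking the maximum of the $r$'s and the union of the $S$'s over the finitely many $\alpha$, produces a uniform $(r,S)$. The $\pi$-image of the Zariski-dense almost-prime set upstairs lies inside $\Gamma_{r,S}(f_{\vbf,g})$ and is Zariski-dense in $\bbg$, yielding the conclusion. The main technical obstacle is the finiteness of $\delta(\Gamma)$ and of $\gcal(\bbz_{S'})/\pi\bigl(\widetilde{\gcal}(\bbz_{\widetilde{S}'})\bigr)$; both rest on standard finiteness results for $H^1$ with restricted ramification of finite multiplicative-type modules, but the enlargement of $S'$ must be tracked carefully so that the output downstairs stays uniform in $(\vbf,g)$.
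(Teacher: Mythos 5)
Your overall strategy matches the paper's up to the point of lifting $\Gamma$: both pass to $\widetilde{\bbg}=\widetilde{\bbg}_{ss}\ltimes R_u(\bbg)$ with central finite kernel $\mu$, and both cut down to the finite-index subgroup $\Gamma_0=\Gamma\cap\ker\delta$ that lifts to a finitely generated, Zariski-dense $\widetilde{\Gamma}\subseteq\widetilde{\bbg}(\bbq)$. (The paper shows $\Gamma/\iota(\Lambda)$ is finite simply by noting it is a finitely generated torsion abelian group, $H^1(\bbq,\mu)$ being torsion; your appeal to restricted-ramification finiteness is heavier than necessary for this step.) The genuine divergence is in how the translate $g\in\gcal(\bbz_{S'})$ is handled. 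You try to lift $g$ across the isogeny: factor $g$ as a product of a fixed representative $g_\alpha$ and an element of $\pi\bigl(\widetilde{\gcal}(\bbz_{\widetilde{S}'})\bigr)$, using finiteness of the quotient (which rests on finiteness of $H^1_{\mathrm{fppf}}(\bbz_{\widetilde{S}'},\mu)$ for $\mu$ finite multiplicative type), and then absorb the lifted part into the upstairs application of Theorem~\ref{t:Perfect2} and run it separately for each of the finitely many families $\{\widetilde{f}_i^{(\alpha)}\}_i$. The paper instead makes no attempt to lift $g$: it pulls $L_g(f)$ back to $f^*_g$ on $\widetilde{\bbg}$, observes that $\deg\widetilde{f^*_g}$ is bounded uniformly in $g$, and shows that any large ramified prime $p$ of $f^*_g$ forces $\#V(f)(\f_p)\gg p^{\dim\bbg}$ via strong approximation, which by Corollary~\ref{c:PointsV} forces $p$ to be a ramified prime of $f$ itself; it then applies Theorem~\ref{t:Perfect} (which gives $r$ depending only on degree, $\#S_{\Gamma,f}$ and $\Gamma$) directly upstairs. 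The paper's route avoids the cohomological finiteness over $S$-integer rings entirely, at the cost of needing the point-count from Corollary~\ref{c:PointsV}; your route keeps the argument within the group-theoretic lift but must import the $H^1$ finiteness and the attendant bookkeeping of enlarging $S'$ to $\widetilde{S}'$, which you rightly flag. One small bug: with $L_g(h)(x)=h(g^{-1}x)$, your identity $f_{\vbf,g}\circ\pi=\widetilde{f}^{(\alpha)}_{\vbf,\widetilde{g}}$ requires the factorization $g=\pi(\widetilde{g})\,g_\alpha$ (representatives on the right), not $g=g_\alpha\,\pi(\widetilde{g})$ as written; as written the two sides differ by the order of $g_\alpha^{-1}$ and $\pi(\widetilde{g})^{-1}$. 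This is fixable (the image of $\widetilde{\bbg}(\bbq)$ is normal in $\bbg(\bbq)$ because $\mu$ is central), but should be stated correctly.
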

\begin{proof}
Since $\bbg$ is equal to its commutator subgroup, its Levi component is semisimple and, as it is also Zariski-connected, $\bbg\simeq\bbg_{ss}\ltimes R_u(\bbg)$ as $\bbq$-groups (see~\cite{M} or \cite[Theorem 2.3]{PR}). Let $\tbbg_{ss}$ be the simply connected covering of $\bbg_{ss}$ and $\tbbg=\tbbg_{ss}\ltimes R_u(\bbg)$. Thus we have the following short exact sequence
\[
1\rightarrow \mu \rightarrow \tbbg \xrightarrow{\iota} \bbg\rightarrow 1,
\]
where $\mu$ is the center of $\tbbg$. Let $\tilde{\Gamma}=\iota^{-1}(\Gamma)$ and $\Lambda=\tilde{\Gamma}\cap\tbbg(\bbq)$. One has the following long exact sequence
\[
\mu(\bbq)\rightarrow \tbbg(\bbq) \xrightarrow{\iota} \bbg(\bbq)\rightarrow H^1(\bbq,\mu).
\]
Thus $\Gamma/\iota(\Lambda)$ is a finitely generated, torsion, abelian group, and so it is finite. As $\mu$ is also finite, $\tilde{\Gamma}/\Lambda$ is also finite. Therefore $\Lambda$ and $\iota(\Lambda)$ are Zariski-dense in $\tbbg$ and $\bbg$, respectively, as $\tbbg$ and $\bbg$ are Zariski-connected. Moreover $\Lambda$ is finitely generated as $\Gamma$ is finitely generated and  $\Gamma/\iota(\Lambda)$ and $\mu$ are finite. 

Now let us take a $\bbq$-embedding of $\tbbg$ in $\mathbb{SL}_{N'}$ for some $N'$. It is clear that we can find a finite set $\tilde{S}$ of primes such that
\begin{enumerate}
\item $\Lambda\subseteq \SL_{N'}(\bbz_{\tilde{S}})$ and  $\Gamma\subseteq \SL_N(\bbz_{\tilde{S}})$.
\item $\iota$ can be extended to a map from $\tilde{\gcal}$ to $\gcal\times \Spec(\bbz_{\tilde{S}})$, where $\tilde{\gcal}$ is the Zariski-closure of $\Lambda$ in $(\mathbb{SL}_{N'})_{\bbz_{\tilde{S}}}$.
\end{enumerate}

Now let $f_i^*\in \bbq[\tbbg]$ be the pull back of $f_i$. Clearly $f_i^*$'s are $\bbq$-linearly independent. So if Theorem~\ref{t:Perfect2} holds for $\tilde{\bbg}$, then there are $r$ and $S$ such that $\Lambda_{r,S}(f^*_{{\bf v},1})$ is Zariski-dense for any ${\bf v}\in \bbz_*^m$. By the definition, this means that $\iota(\Lambda)_{r,S}(f_{{\bf v},1})$ is Zariski-dense. By the above discussion, we have that $\Gamma_{r,S}(f_{{\bf v},1})$ is Zariski-dense for any ${\bf v}\in \bbz_*^m$.

Now we shall assume that $f\in\bbz_S[\gcal]$ is not constant and $\iota(\Lambda)_{r,S}(f)$ is Zariski-dense for a positive integer $r$. We would like to show that after enlarging $S$, if necessary, we have that $\iota(\Lambda)_{r,S}(L_g(f))$ is also Zariski-dense for any $g\in\gcal(\bbz_{S\rq{}})$. Without loss of generality, let us assume that $S$ contains both $S\rq{}$ and $\tilde{S}$. Now let $f^*_g\in\bbz_{S}[\tilde{\gcal}]$ be the pull back of $L_g(f)$. By a similar argument to the above it is enough to show after enlarging $S$ we have that $\Lambda_{r,S}(f^*_g)$ is Zariski-dense in $\tbbg$ for any $g$. To get such a result it is enough, by Theorem~\ref{t:Perfect}, to get a uniform bound on the degree of lifts of $f^*_g$ and a uniform upper bound for their sets of ramified primes. The claim on the degree of these functions is clear. Now let $p$ be a ramified prime of $f^*_g$, this means that for any $\lambda\in \Lambda$ we have that $f^*_g(\lambda)\in p\bbz_{S}$. Hence $\pi_p(f(g^{-1}\iota(\lambda)))=0$. Thus by \cite{Nor} we have that a coset of $\iota(\tgcal_p(\f_p))$ is a subset of $V(f)(\f_p)$. This implies that 
\begin{equation}\label{e:LowerBoundV}
\#V(f)(\f_p)>\!\!> p^{\dim \bbg},
\end{equation}
 where the implied constant just depends on $\bbg$. By Corollary~\ref{c:PointsV}, (\ref{e:LowerBoundV}) cannot hold for large enough $p$ unless it is a ramified prime of $f$. This completes the proof of Lemma 24.
\end{proof}
\noindent
For the rest of this section, by Lemma~\ref{l:ReductionSC}, we can and will assume that the semisimple part of $\bbg$ is simply connected. Let us continue with a few elementary lemmas in commutative algebra.
\begin{lem}\label{l:Zsfree}
Let $A$ be a finitely generated integral domain of characteristic zero. Then there exists a finite set $S$ of primes such that $A_S=A\otimes_{\bbz} \bbz_S$ is a free $\bbz_S$-module. Moreover there are $\bbq$-algebraically independent elements  $x_1,\ldots, x_d$ in $A$, such that $A_S$ is a finitely generated $\bbz_S[x_1,\ldots,x_d]$-module.
\end{lem}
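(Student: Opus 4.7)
The plan is to combine Noether normalization over $\bbq$ with Grothendieck's generic freeness lemma, and then keep careful track of denominators so that only finitely many primes need to be inverted.

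First, I would invoke Noether normalization for the $\bbq$-algebra $A_{\bbq}:=A\otimes_{\bbz}\bbq$, which is finitely generated and an integral domain of characteristic zero. This produces elements $y_1,\ldots,y_d\in A_{\bbq}$ that are algebraically independent over $\bbq$ and such that $A_{\bbq}$ is a finitely generated $\bbq[y_1,\ldots,y_d]$-module. Scaling each $y_i$ by a suitable positive integer preserves both algebraic independence and the finiteness of the extension, so I may assume $x_i:=y_i\in A$.

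Next I would control denominators to extract the second assertion. Fix $\bbz$-algebra generators $a_1,\ldots,a_n$ of $A$ and $\bbq[x_1,\ldots,x_d]$-module generators $b_1,\ldots,b_m\in A$ of $A_{\bbq}$ (with $1$ included among the $b_\ell$'s). Expressing each $a_k$ and each product $b_ib_j$ as a $\bbq[x_1,\ldots,x_d]$-linear combination of the $b_\ell$'s introduces only finitely many rational coefficients; let $S_0$ be the set of primes dividing any of their denominators. Then inside $A_{S_0}$ the submodule $M:=\sum_{i=1}^{m}\bbz_{S_0}[x_1,\ldots,x_d]\cdot b_i$ is closed under multiplication and contains every $a_k$, hence is a subring equal to $A_{S_0}$. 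This gives the finite generation over $\bbz_{S_0}[x_1,\ldots,x_d]$.

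For the freeness conclusion I would apply Grothendieck's generic freeness lemma (EGA IV, 6.9.1, or Eisenbud, \emph{Commutative Algebra}, Theorem~14.4) to the Noetherian integral domain $\bbz_{S_0}$, the finitely generated $\bbz_{S_0}$-algebra $A_{S_0}$, and the finitely generated $A_{S_0}$-module $A_{S_0}$ itself. This produces a nonzero $h\in \bbz_{S_0}$ such that $A_{S_0}[1/h]$ is a free $\bbz_{S_0}[1/h]$-module. Enlarging $S_0$ by the finitely many primes dividing $h$ yields the single finite set $S$ for which $A_S$ is simultaneously free over $\bbz_S$ and a finitely generated $\bbz_S[x_1,\ldots,x_d]$-module. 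The only genuinely non-formal ingredient is generic freeness; the remainder is a routine tracking of denominators, so I do not anticipate a serious obstacle.
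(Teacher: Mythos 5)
Your proposal is correct, and it establishes the same two conclusions, but it departs from the paper's argument on the freeness half. The finite-generation part is essentially identical: both you and the paper invoke Noether normalization over $\bbq$ and then clear denominators in finitely many structure equations to descend to $\bbz_{S_0}[x_1,\ldots,x_d]$ (your ring $M$ trick is a clean way of organizing this). The divergence is in how the $\bbz_S$-freeness of $A_S$ is obtained. The paper uses \cite[Proposition 5.17]{AM}: since $B_S=\bbz_S[x_1,\ldots,x_d]$ is integrally closed and the fraction-field extension is separable (automatic in characteristic zero), $A_S$ embeds in a finite free $B_S$-module $B_Sv_1\oplus\cdots\oplus B_Sv_n$, which is in particular a free $\bbz_S$-module; then $A_S$, as a submodule of a free module over the PID $\bbz_S$, is itself free. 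You instead invoke Grothendieck's generic freeness lemma for the pair $(\bbz_{S_0}, A_{S_0})$ and throw in the finitely many primes dividing the resulting $h$. Both routes are valid; yours uses a heavier but very standard blackbox and sidesteps the separability and integral-closure considerations, while the paper's is more hands-on and keeps everything at the level of classical commutative algebra (Atiyah--Macdonald plus the structure theory of modules over a PID). Either way the two conclusions are made to hold simultaneously after a single finite enlargement of $S$, which is what the lemma requires.
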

\begin{proof}
By the assumptions, $A_{\bbq}$ is a finitely generated $\bbq$-integral domain. By N\"{o}ether normalization lemma, $A_{\bbq}$ is an integral, and so finite, extension of a polynomial algebra $B_{\bbq}=\bbq[x_1,\ldots,x_d]$. Now one can easily find a finite set of prime numbers $S$, such that $A_S$ is a finite extension of $B_S=\bbz_S[x_1,\ldots,x_d]$, i.e. it is a finitely generated $B_S$-module. Let $L$ ($K$, resp.) be the field of fractions of $A_S$ ($B_S$, resp.). Since $B_S$ is integrally closed and $L/K$ is a separable extension, there is $\{v_1,\ldots,v_n\}$ a $K$-basis of $L$ such that 
\[
A_S\subseteq B_Sv_1\oplus B_Sv_2\oplus\cdots\oplus B_Sv_n,
\]
(see~\cite[Proposition 5.17]{AM}). Hence $A_S$ is a $\bbz_S$-submodule of a free module, and so it is a free $\bbz_S$-module.
\end{proof}
\begin{cor}\label{c:IndependentModp}
Let $A$ be as above. If  $a_1,\ldots,a_m$ are $\bbq$-linearly independent elements of $A$, then there is a finite set $S$ of primes such that $a_i({\rm mod}\h p)$'s are linearly independent over $\f_p$, for any $p\in\Pi\setminus S$.
\end{cor}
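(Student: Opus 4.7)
The plan is to leverage Lemma~\ref{l:Zsfree} to make the argument purely a question of linear algebra over a free module, and then detect $\mathbb{Q}$-linear independence by a single nonvanishing minor whose prime factors we can throw into the bad set.

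First, apply Lemma~\ref{l:Zsfree} to obtain a finite set $S_0$ of primes such that $A_{S_0}$ is a free $\mathbb{Z}_{S_0}$-module; fix a basis $\{e_\alpha\}_{\alpha \in J}$. After enlarging $S_0$ (finitely) to clear any denominators appearing in the expansions of the $a_i$, we may write
\[
a_i = \sum_{\alpha \in J} c_{i\alpha}\, e_\alpha, \qquad c_{i\alpha}\in\bbz_{S_0},
\]
with only finitely many nonzero coefficients. Let $J_0\subset J$ be the union of the supports, and let $C=(c_{i\alpha})$ be the resulting $m\times |J_0|$ matrix over $\bbz_{S_0}$.

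Next, I would use the hypothesis: since $a_1,\ldots,a_m$ are $\bbq$-linearly independent in $A_{\bbq}=A_{S_0}\otimes_{\bbz_{S_0}}\bbq$, and since $\{e_\alpha\}$ remains a $\bbq$-basis of $A_{\bbq}$, the matrix $C$ has rank $m$ over $\bbq$. Hence some $m\times m$ minor $\Delta$ of $C$ is a nonzero element of $\bbz_{S_0}$. Set
\[
S := S_0 \cup \Pi(\Delta).
\]

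Finally, for any prime $p\notin S$, reduction modulo $p$ yields $A_{S_0}/pA_{S_0}$ as a free $\f_p$-module on the images $\{\bar e_\alpha\}$, and the reduced matrix $\bar C$ still has the nonzero minor $\bar\Delta\in\f_p^{\times}$. Consequently the rows of $\bar C$, which are precisely the images of $a_1,\ldots,a_m$ in $A/pA$, are $\f_p$-linearly independent. There is no real obstacle in this argument; the content is entirely in Lemma~\ref{l:Zsfree}, and what remains is the standard observation that $\bbq$-linear independence of finitely many vectors in a free module becomes $\f_p$-linear independence of their reductions as soon as one inverts the primes dividing a single nonvanishing maximal minor.
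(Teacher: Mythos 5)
Your proposal is correct and is exactly the expansion the paper has in mind: the paper dispatches this corollary with the one line ``direct consequence of Lemma~\ref{l:Zsfree},'' and your argument (free $\bbz_S$-module from Lemma~\ref{l:Zsfree}, then a nonvanishing $m\times m$ minor whose prime divisors are tossed into $S$) is the standard way to fill that in. Nothing differs in substance.
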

\begin{proof}This is a direct consequence of Lemma~\ref{l:Zsfree}.
\end{proof}
\begin{definition}\label{d:Height}
For any $P(T)=\sum_i c_i T^i$ in the ring of polynomials with coefficients in $\bbq[x_1,\ldots,x_d]$, we define the height of $P$ to be
\[
H(P)=\max_i \{\deg c_i\}.
\]
\end{definition}
\begin{lem}\label{l:DegC}
Let $A_S$ be a finitely generated $\bbz_S$-integral domain which is a finite extension of a polynomial ring $B_S=\bbz_S[x_1,\ldots,x_d]$. If $a_1,\ldots,a_m$ are $\bbq$-linearly independent elements of $A$, then there exists $D>0$, depending on $a_i$'s, such that any integer combination of $a_i$'s satisfies a monic polynomial over $B_S$ whose height is {\rm at most} $D$.
\end{lem}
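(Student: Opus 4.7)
The plan is to produce a uniform monic relation via a Cayley--Hamilton argument, where the structure constants are chosen once and for all, independently of the integer combination.

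First I would pick a finite set $w_1,\ldots,w_r$ of $B_S$-module generators of $A_S$, arranged so that $w_1=1$ (which is possible since $1\in A_S$). For each $i\in\{1,\ldots,m\}$ and $j\in\{1,\ldots,r\}$, the product $a_iw_j$ lies in $A_S$, hence admits an expression
\[
a_iw_j=\sum_{k=1}^{r}b^{(i)}_{jk}\,w_k,\qquad b^{(i)}_{jk}\in B_S.
\]
The finite collection $\{b^{(i)}_{jk}\}$ depends only on the fixed $a_i$'s and the chosen generators, so $D_0=\max_{i,j,k}\deg b^{(i)}_{jk}$ is a well-defined finite quantity independent of the integer coefficients that will come.

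Next, given an arbitrary integer combination $a=\sum_i c_ia_i$ with $c_i\in\bbz$, I form the $r\times r$ matrix $C=(C_{kj})$ over $B_S$ with $C_{kj}=\sum_i c_ib^{(i)}_{jk}$, so that $aw_j=\sum_k C_{kj}w_k$. Because the $c_i$ are integer constants, each entry satisfies $\deg C_{kj}\le D_0$, uniformly in the choice of $c_i$'s. I then apply Cayley--Hamilton to the endomorphism of the free module $B_S^r$ defined by $C$, and transport the resulting identity along the surjection $B_S^r\twoheadrightarrow A_S$, $e_j\mapsto w_j$. This yields $P(a)\cdot w_j=0$ for all $j$, where $P(T)=\det(TI-C)\in B_S[T]$ is monic in $T$ of degree $r$; evaluating at $w_1=1$ gives $P(a)=0$ in $A_S$, so $a$ satisfies $P$.

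Finally, the coefficient of $T^{r-\ell}$ in $P(T)$ is, up to sign, a sum of principal $\ell\times\ell$ minors of $C$, each a polynomial of degree at most $\ell D_0\le rD_0$. Hence $H(P)\le rD_0$, and $D:=rD_0$ does the job. The main thing to get right is the passage from the Cayley--Hamilton identity on $B_S^r$ to an identity in $A_S$ through a surjection that need not be an isomorphism; this is handled by noting the identity holds after multiplying by every generator, in particular by $w_1=1$. Everything else (the existence of the expansions $a_iw_j=\sum_k b^{(i)}_{jk}w_k$ and the minor-degree bound) is routine once the single bound $D_0$ is isolated.
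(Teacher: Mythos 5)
Your proof is correct, and it takes a genuinely different route from the paper's. The paper reduces to $m=2$ by induction, handles scalar multiples $n\alpha$ by rescaling the monic minimal polynomial (noting the height is unchanged), and then bounds the height of a monic polynomial for $\alpha+\beta$ by expressing $\prod_{i,j}(x-\alpha^{(i)}-\beta^{(j)})$ through symmetric functions of the roots of $P_\alpha$ and $P_\beta$. You instead fix once and for all a finite system of $B_S$-module generators $w_1,\ldots,w_r$ of $A_S$ with $w_1=1$, record the multiplication tables $a_iw_j=\sum_k b^{(i)}_{jk}w_k$, and observe that for \emph{any} integer combination $a=\sum_i c_i a_i$ the representing matrix $C=\sum_i c_i\,(b^{(i)}_{jk})$ still has entries of degree at most $D_0=\max_{i,j,k}\deg b^{(i)}_{jk}$, because the $c_i$ are degree-zero elements of $B_S$; the determinant trick (Cayley--Hamilton transported along $B_S^r\twoheadrightarrow A_S$, using $w_1=1$) then produces a single degree-$r$ monic annihilator of height at most $rD_0$. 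Your approach buys several things: it avoids both the induction on $m$ and the symmetric-function computation, it yields one universal annihilating polynomial of fixed degree $r$ rather than a polynomial whose degree grows with $m$, and it gives a transparent explicit bound $D=rD_0$. Both arguments ultimately hinge on the same observation --- integer scalars do not raise degree in $B_S$ --- but the module-theoretic packaging uniformizes everything in a single step, so your version is arguably the cleaner one.
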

\begin{proof} We prove the lemma for $m=2$ and the general case can be deduced by induction. Let $P_{\alpha}$ and $P_{\beta}$ be monic polynomials with coefficients in $B_S$ which are satisfied by $\alpha=a_1$ and $\beta=a_2$, respectively. It is clear that, for any integer $n$, there is a monic polynomial $\mathcal{Q}(T)\in B_S[T]$ such that $\mathcal{Q}(nT)=n^{\deg P_{\alpha}} P_{\alpha}(T)$. In particular, $n\alpha$ satisfies a monic polynomial in $B_S[T]$ with height at most equal to $H(P_{\alpha})$ and degree at most $\deg(P_{\alpha})$. Thus it is enough to show that $\alpha+\beta$ satisfies a monic polynomial over $B_S$ whose height is bounded by a function of $H(P_{\alpha})$, $H(P_{\beta})$, $\deg(P_{\alpha})$ and $\deg(P_{\beta})$.
\\

\noindent
Let $x-\alpha^{(1)},\ldots,x-\alpha^{(n_1)}$ and $x-\beta^{(1)},\ldots,x-\beta^{(n_2)}$ be the linear factors of $P_{\alpha}(x)$ and $P_{\beta}(x)$, respectively, in an extension of the field of fractions of $A$. So $\alpha+\beta$ satisfies 
\[
P_{\alpha+\beta}(x)=\prod_{i=1}^{n_1}\prod_{j=1}^{n_2}(x-\alpha^{(i)}-\beta^{(j)}).
\]
On the other hand, consider the $n_1+n_2+1$ variable polynomial 
\[
P(T,\alpha_1,\ldots,\alpha_{n_1},\beta_1,\ldots,\beta_{n_2})=\prod_{i=1}^{n_1}\prod_{j=1}^{n_2}(T-\alpha_i-\beta_j).
\]
Since $P$ is invariant under any permutation of $\alpha_i$'s, there are linearly independent symmetric polynomials $\scal_n$'s in $\alpha_i$'s and polynomials $\mathcal{Q}_n$ in $T$ and $\beta_j$'s such that 
\[
P(T,\alpha_i,\beta_j)=\sum_n \mathcal{Q}_n(T,\beta_j) \scal_n(\alpha_i)\h\&\h \deg \mathcal{Q}_n+\deg \scal_n \le n_1n_2.
\]
As $P$ is also invariant under any permutation of $\beta_j$'s, we have 
\[
P(T,\alpha_i,\beta_j)=\sum_{n,l} \scal_n(\alpha_i) \scal'_{nl}(\beta_j) \mathcal{Q}_{nl}(T)\h\&\h \deg \scal_k+\deg \scal'_{nl} \le n_1n_2,\]
where $\scal'_{nl}$'s are symmetric polynomials in $\beta_j$'s. Thus 
\[
P_{\alpha+\beta}(x)=\sum_{n,l}\scal_n(\alpha^{(i)})\scal_{nl}'(\beta^{(j)})\mathcal{Q}_{nl}(x).
\]
 Moreover $\scal_n(\alpha^{(i)}),\scal_{nl}'(\beta^{(j)})\in \bbz[x_1,\ldots,x_d]$  and 
\[
\deg(\scal_n(\alpha^{(i)})\scal_{nl}'(\beta^{(j)}))\le D=D(H(P_{\alpha}),H(P_{\beta}),\deg(P_{\alpha}),\deg(P_{\beta})),
\]
which finishes the proof.
\end{proof}
\begin{prop}\label{p:Arithmetic}
Let $\bbg={\rm Zcl}(\Gamma)$ be a Zariski-connected perfect group such that its semisimple factor is simply-connected. Let $f\in \bbq[\bbg]$ be a non-zero function and let $S\rq{}$ be a finite set of primes. Then there is a finite set $S$ of primes such that
\[
\bigcup_{g\in\gcal(\bbz_{S\rq{}})} \bigcup_{\vbf\in\bbz^m_*} S_{\Gamma,f_{\vbf,g}}\subseteq S.
\]
\end{prop}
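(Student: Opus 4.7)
The plan is to establish, by contradiction, that any prime $p$ outside a suitable finite set $S$ cannot be ramified with respect to $\Gamma$ and any $f_{\vbf,g}$. First I would enlarge $S$ to contain $S'$, the Nori set $S_{\Gamma}$ of the introduction, the finite set of primes from Corollary~\ref{c:IndependentModp} applied to $f_1,\ldots,f_m$, and enough primes so that $f_i \in \bbz_S[\gcal]$. Then for every $p \notin S$ we have simultaneously: (a) $\pi_p(\Gamma) = \gcal_p(\f_p)$ by Nori's theorem; (b) $\gcal_p$ is geometrically irreducible of dimension $\dim\bbg$; and (c) $\bar f_1,\ldots,\bar f_m$ are $\f_p$-linearly independent in $\f_p[\gcal_p]$.

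Next, fix a primitive $\vbf \in \bbz^m_*$ and $g \in \gcal(\bbz_{S'})$, and suppose some $p \notin S$ is ramified with respect to $f_{\vbf,g}$. Unpacking the definition gives $\sum_i v_i f_i(g^{-1}\gamma) \equiv 0 \pmod p$ for every $\gamma \in \Gamma$. Since $\pi_p(\gamma)$ ranges over all of $\gcal_p(\f_p)$ by (a), and left multiplication by $\pi_p(g)^{-1}$ permutes $\gcal_p(\f_p)$, the reduction $h = \sum_i \bar v_i \bar f_i \in \f_p[\gcal_p]$ vanishes identically on $\gcal_p(\f_p)$. On the other hand, by (c) together with primitivity of $\vbf$ (which forces some $\bar v_i$ to be a unit modulo $p$), the function $h$ is non-zero in $\f_p[\gcal_p]$.

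To finish, I would bound the number of $\f_p$-zeros of the non-zero regular function $h$ on the geometrically irreducible variety $\gcal_p$ and contradict its vanishing on all of $\gcal_p(\f_p)$. Crucially, a lift of $\sum_i v_i f_i$ to $\bba^{n^2}_{\bbq}$ is given by $\sum_i v_i \tilde f_i$, whose total degree is at most $\max_i \deg \tilde f_i$, \emph{uniformly} in $\vbf$. Then general Bezout~\cite{Sc} as in Proposition~\ref{p:Geometric} bounds the degree of $Z := V(h) \cap \gcal_p$ by a constant depending only on $\gcal$ and $\max_i \deg \tilde f_i$, so \cite[Lemma 3.1]{FHM} gives $|Z(\f_p)| = O(p^{\dim\bbg - 1})$. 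However, Lang-Weil~\cite{LW} yields $|\gcal_p(\f_p)| = p^{\dim\bbg} + O(p^{\dim\bbg - 1/2})$, which contradicts vanishing of $h$ on $\gcal_p(\f_p)$ for all $p$ larger than an explicit threshold depending only on $\gcal$ and the $f_i$. Absorbing these remaining small primes into $S$ completes the argument.

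The main subtlety is uniformity in both $\vbf$ and $g$. Uniformity in $\vbf$ is bought by the linearity of $\vbf \mapsto \sum_i v_i f_i$ combined with Corollary~\ref{c:IndependentModp}, which is exactly what reduces the problem to a single linear-algebraic non-vanishing check modulo $p$; uniformity in $g$ is automatic, since reduction mod $p$ commutes with left multiplication, so the ramification condition translates into a vanishing condition on $\gcal_p(\f_p)$ that is independent of $g$ once $p \notin S'$.
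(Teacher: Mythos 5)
Your proof is correct, and it takes a genuinely different route to the key uniformity than the paper does. Both arguments begin by enlarging $S$ to contain $S'$, $S_\Gamma$, the integrality/irreducibility set for $\gcal$, and the linear-independence set of Corollary~\ref{c:IndependentModp}; both then translate ramification of $f_{\vbf,g}$ at $p$ into the statement that $h=\sum_i \bar v_i\bar f_i$ vanishes on all of $\gcal_p(\f_p)$, using Nori's surjectivity $\pi_p(\Gamma)=\gcal_p(\f_p)$ and the fact that left translation by $\pi_p(g)^{-1}$ permutes $\gcal_p(\f_p)$ (the paper defers the $g$-independence to the argument in Lemma~\ref{l:ReductionSC}, while you observe it directly; these are the same observation). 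From here the two routes diverge. The paper invokes N\"oether normalization (Lemmas~\ref{l:Zsfree} and \ref{l:DegC}): it realizes $\bbz_S[\gcal]$ as a finite extension of a polynomial ring $B_S=\bbz_S[x_1,\ldots,x_d]$, shows every $f_\vbf$ satisfies a monic polynomial over $B_S$ whose coefficients have degree bounded uniformly in $\vbf$, and derives a contradiction by noting that ramification forces the constant term to vanish at roughly $p^d/D_1$ points of $\bba^d(\f_p)$ while its degree is $O(1)$. You instead stay inside $\gcal_p$: you use that $h\ne 0$ in the domain $\f_p[\gcal_p]$ (linear independence mod $p$ plus primitivity of $\vbf$), so $V(h)\cap\gcal_p$ is a proper subvariety of dimension $\le d-1$, lift $\sum_i v_i f_i$ to $\sum_i v_i\tilde f_i$ of total degree $\le\max_i\deg\tilde f_i$ (Lemma~\ref{l:UpperBoundDegree}, uniform in $\vbf$), and then combine general Bezout with~\cite[Lemma 3.1]{FHM} and Lang--Weil for the same contradiction. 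What the paper's normalization buys is an explicit polynomial (the product of the $c^{(\vbf)}_i$) that controls the bad primes, which dovetails with the effectiveness discussion; what yours buys is economy, as it reuses the point-counting machinery already set up in Proposition~\ref{p:Geometric}, Corollary~\ref{c:PointsV}, and Lemma~\ref{l:UpperBoundDegree}, and avoids the height bookkeeping of Definition~\ref{d:Height} and Lemma~\ref{l:DegC} entirely. One small point worth making explicit in a writeup: for $p\notin S_\Gamma$ the scheme $\gcal_p$ is smooth and geometrically irreducible (Nori), hence integral, so $\f_p[\gcal_p]$ is a domain and ``$h\ne 0$'' really does force $\dim V(h)\cap\gcal_p<d$; and the Bezout bound must be applied over $\f_p$, not over $\bbq$, though this is immediate since the cited version of Bezout is field-agnostic.
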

\begin{proof}
Since $\bbg_{ss}$ is simply connected, by Nori's theorem~\cite{Nor}, the closure of $\Gamma$ in $\prod_{p\in\Pi\setminus S_{\Gamma}} \SL_N(\bbz_p)$ is equal to $\prod_{p\in\Pi\setminus S_{\Gamma}} \gcal(\bbz_p)$. In particular, $\#\pi_p(\Gamma)=\#\gcal_p(\f_p)=p^d+O(p^{d-\frac{1}{2}})$, for any $p\in \Pi\setminus S_{\Gamma}$, where $d=\dim \bbg$.
\\

\noindent
We also notice that by a similar argument as in Lemma~\ref{l:ReductionSC}, a large enough $p$ is a ramified prime of $f_{{\bf v},g}$ if and only if it is a ramified prime of $f_{{\bf v},1}$.  So it is enough to prove the proposition only for $f_{\vbf}=f_{\vbf,1}$'s.

On the other hand, by Corollary~\ref{c:IndependentModp}, there is a finite set $S_1$ of primes  such that $f_i$'s modulo $p$ are linearly independent over $\f_p$. Also, by Lemma~\ref{l:Zsfree}, there is a finite set $S_2$ of primes such that $\bbz_{S_2}[\gcal]$ is a finite extension of a polynomial ring over $\bbz_{S_3}$. Let $p\in S_{\Gamma,\sum_{i=1}^mv_if_i}\setminus (S_{\Gamma}\cup S_1\cup S_2)$, where $\vbf\in \bbz^m_{*}$. So $\phi(\sum_{i=1}^mv_if_i)=0$, for any homomorphism $\phi:\bbz[\gcal]\rightarrow\f_p$. Any such homomorphism can be extended to a homomorphism from $\bbz[\gcal]\otimes\bbz_{S_3}$ to $\f_p$. Let $A=\bbz[\gcal]$. We have that $A_{S_3}$ is a finite extension of $B_{S_3}=\bbz_{S_3}[x_1,\ldots,x_d]$. Hence there is a positive number $D_1$ (independent of $\vbf$) such that at most $D_1$ homomorphisms $\phi:A_{S_3}\rightarrow \f_p$ have the same restriction on $B_{S_3}$. In particular, 
\begin{equation}\label{e:InequalityHom}
\#\{\phi':B_{S_3}\rightarrow \f_p:\h\exists\phi\in \Hom(A_{S_3},\f_p)\h{\rm s.t.}\h\phi'=\phi|_{B_{S_3}}\}\ge \frac{1}{2D_1}p^d.
\end{equation}
On the other hand, by Lemma~\ref{l:DegC}, there is  a positive number $D_2$ depending only on $f_i$'s such that $f_{\vbf}=\sum_i v_if_i$ satisfies an equation of degree at most $D_2$
\begin{equation}\label{e:Equation}
\sum_i c^{(\vbf)}_i f_{\vbf}^i=0,
\end{equation}
where $c^{(\vbf)}_i\in B_{S_3}$ and $\deg c^{(\vbf)}_i\le D_2$. Thus, for any $\phi\in \Hom(A_{S_3},\f_p)$, we have $\phi(f_{\vbf})=0$ as $p$ is in $S_{\Gamma,f_{\vbf}}\setminus (S_1\cup S_2\cup S_{\Gamma})$. Hence, by Equation (\ref{e:Equation}), $\phi(c_i^{(\vbf)})=0$ for some $i$. Therefore, by (\ref{e:InequalityHom}), we have
\begin{equation}\label{e:InequalityPoints}
\#V(\prod_i c^{(\vbf)}_i)(\f_p)=\#\{\phi'\in Hom(B_{S_3},\f_p):\h\phi'(\prod_i c^{(\vbf)}_i)=0\}\ge \frac{1}{2D_1}p^d.
\end{equation}
Notice that, since $p\not\in S_2$, $f_{\vbf}({\rm mod}\h p)$ is not zero and neither is $\prod_i c^{(\vbf)}_ i({\rm mod}\h p)$. Thus (see~\cite{Sch})
\begin{equation}\label{e:HyperPoint}
\#V(\prod_i c_i^{(\vbf)})(\f_p)\le \deg(\prod_i c_i^{(\vbf)}) p^{d-1}\le D_2^{D_2} p^{d-1}.
\end{equation}
Proposition~\ref{p:Arithmetic} now follows from (\ref{e:InequalityPoints}) and (\ref{e:HyperPoint}).
\end{proof}
\begin{lem}\label{l:UpperBoundDegree}
In the above setting, for any $g\in\SL_N(\bbq)$ and $\vbf\in\bbz^m$,
\[
\deg L_g(\sum_{i=1}^m v_i\tilde{f}_i)\le \max_{i} \deg\tilde{f}_i.
\] 
\end{lem}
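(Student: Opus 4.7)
The approach is to unpack the definition of $L_g$ in terms of the matrix coordinates on $\bba^{N^2}$, and observe that $L_g$ acts by substitution of linear forms, which cannot increase the total degree of a polynomial. Linearity and the standard bound on the degree of a sum then finish the argument.

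In more detail, I would fix matrix coordinates $X = (x_{ij})_{1 \le i,j \le N}$ on $\bba^{N^2}$, so that each $\tilde{f}_i$ is an element of $\bbq[x_{11}, \ldots, x_{NN}]$. By definition, for $g \in \SL_N(\bbq)$ and any regular function $\tilde{f}$ on $\bba^{N^2}$, we have $L_g(\tilde{f})(X) = \tilde{f}(g^{-1} X)$. The $(i,j)$-entry of $g^{-1} X$ is the linear form $\sum_{k=1}^N (g^{-1})_{ik}\, x_{kj}$ in the variables $x_{kj}$. Thus $L_g(\tilde{f})$ is obtained from $\tilde{f}$ by substituting a collection of homogeneous linear forms (with coefficients in $\bbq$) for the variables $x_{ij}$.

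I would then note the elementary fact that substituting linear forms into a polynomial of degree $d$ yields a polynomial of degree at most $d$: each monomial of degree $\le d$ becomes a product of linear forms of total degree $\le d$, hence has degree $\le d$, and summing over monomials preserves this bound. Applying this to each $\tilde{f}_i$ gives $\deg L_g(\tilde{f}_i) \le \deg \tilde{f}_i$ for every $i$.

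Finally, since $L_g$ is a $\bbq$-linear operator on $\bbq[x_{11},\ldots,x_{NN}]$, we have
\[
L_g\!\left(\sum_{i=1}^m v_i \tilde{f}_i\right) = \sum_{i=1}^m v_i\, L_g(\tilde{f}_i),
\]
and the degree of a $\bbq$-linear combination is bounded by the maximum degree of its summands, so
\[
\deg L_g\!\left(\sum_{i=1}^m v_i \tilde{f}_i\right) \le \max_i \deg L_g(\tilde{f}_i) \le \max_i \deg \tilde{f}_i,
\]
which is the claim. Honestly there is no real obstacle here; the content of the lemma is simply that left translation on $\mathbb{SL}_N$ extends to a degree-preserving automorphism of $\bbq[\bba^{N^2}]$, and the only thing to verify is the triviality that linear substitutions preserve total degree.
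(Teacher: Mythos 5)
Your argument is correct and is exactly the (trivial) reasoning the lemma relies on; the paper simply asserts ``It is clear'' and gives no further detail. You have just written out the obvious expansion: left translation is a linear substitution of coordinates, which cannot raise total degree, and the degree of a linear combination is bounded by the maximum of the summands' degrees.
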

\begin{proof}It is clear.
\end{proof}
\begin{proof}[Proof of Theorem~\ref{t:Perfect2}]
It is a direct consequence of Theorem~\ref{t:Perfect}, Proposition~\ref{p:Arithmetic}, and Lemma~\ref{l:UpperBoundDegree}.
\end{proof}

%%%%%%%%%%%%%%%%%%%%%%%%%%%%%%%%%%%%%%%%%%%%%%%%%%%
\section{The general case.}\label{s:General}
In this section, we complete the proof of Theorem~\ref{t:Main}. To do so, first we reduce it to the case of Zariski connected groups, and then carefully combine the perfect case with the unipotent case. 
\begin{lem}~\label{l:ReductionZariskiConnected}
If Theorem~\ref{t:Main} holds when $\bbg$ is a Zariski connected group, then it holds in general.
\end{lem}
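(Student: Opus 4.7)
The plan is to reduce Theorem~\ref{t:Main} to the Zariski-connected case by passing to the identity component. Let $\bbg^{\circ}$ denote the Zariski-connected component of $\bbg$ containing the identity, and set $\Gamma^{\circ} := \Gamma \cap \bbg^{\circ}(\bbq)$. Since $\bbg^{\circ}$ is of finite index in $\bbg$, the subgroup $\Gamma^{\circ}$ is of finite index in $\Gamma$, and in particular is again a finitely generated subgroup of $\SL_n(\bbq)$. The Levi-semisimplicity hypothesis is formulated entirely in terms of $\bbg^{\circ}$ (see the equivalent conditions listed in the introduction), so $\bbg^{\circ}$ inherits this property. I would first verify that $\Zcl(\Gamma^{\circ}) = \bbg^{\circ}$: because $\bbg$ is smooth, its irreducible components coincide with its connected components, which are exactly the cosets of $\bbg^{\circ}$; since $\Gamma$ is Zariski dense in $\bbg$, it meets each such coset, hence $\Gamma\to\bbg/\bbg^{\circ}$ is surjective and $\Gamma^{\circ}$ fills out $\bbg^{\circ}$.

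Next, fix coset representatives $\gamma_1,\ldots,\gamma_k \in \Gamma$ so that $\Gamma = \bigsqcup_{i=1}^{k} \gamma_i \Gamma^{\circ}$ and correspondingly $\bbg = \bigsqcup_{i=1}^{k} \gamma_i \bbg^{\circ}$. For each $i$, define $f_i \in \bbq[\bbg^{\circ}]$ by $f_i(h) := f(\gamma_i h)$; equivalently, $f_i$ is the restriction to $\bbg^{\circ}$ of $L_{\gamma_i^{-1}}(f)$. Because $\gamma_i \bbg^{\circ}$ is an irreducible component of $\bbg$ on which, by hypothesis, $f$ is not identically zero, $f_i$ is a nonzero regular function on the irreducible variety $\bbg^{\circ}$.

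Now apply the connected case of Theorem~\ref{t:Main} (assumed valid) to each triple $(\Gamma^{\circ},\bbg^{\circ},f_i)$ to obtain a positive integer $r_i$ and a finite set of primes $S_i$ such that $\Gamma^{\circ}_{r_i,S_i}(f_i)$ is Zariski dense in $\bbg^{\circ}$. Setting $r := \max_i r_i$ and $S := \bigcup_i S_i$, the identity $f(\gamma_i \gamma) = f_i(\gamma)$ yields
\[
\gamma_i \cdot \Gamma^{\circ}_{r_i,S_i}(f_i) \;\subseteq\; \Gamma_{r,S}(f),
\]
and the left-hand side is Zariski dense in the component $\gamma_i \bbg^{\circ}$. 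Taking the union over $i = 1,\ldots,k$ exhibits a subset of $\Gamma_{r,S}(f)$ whose Zariski closure contains every irreducible component of $\bbg$ and is therefore equal to $\bbg$, as required.

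There is no serious obstacle; the argument is essentially bookkeeping on cosets. The only points that require any care are the verification that $\Gamma^{\circ}$ is Zariski dense in $\bbg^{\circ}$ (equivalently, that the natural map $\Gamma/\Gamma^{\circ}\to\bbg/\bbg^{\circ}$ is a bijection) and the observation that each translated function $f_i$ remains nonzero on $\bbg^{\circ}$; both follow immediately from the Zariski density of $\Gamma$ in $\bbg$ and the hypothesis that $f$ does not vanish identically on any irreducible component of $\bbg$.
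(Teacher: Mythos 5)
Your proof is correct and follows essentially the same route as the paper: decompose $\bbg$ into cosets of $\bbg^{\circ}$ with representatives from $\Gamma$, translate $f$ by these representatives, restrict to $\bbg^{\circ}$ (noting nonvanishing on each component), apply the connected case to each triple, and take the maximum of the $r_i$ and the union of the $S_i$. The only cosmetic difference is your convention $f_i = L_{\gamma_i^{-1}}(f)|_{\bbg^{\circ}}$ versus the paper's $L_{\gamma_i}(f)$; otherwise the two arguments coincide.
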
 
\begin{proof}
Let $\Gamma^{\circ}=\Gamma\cap\bbg^{\circ}$, where $\bbg^{\circ}$ is the connected component of $\bbg$ containing the identity element. Since $\Gamma$ is Zariski dense in $\bbg$, $\bbg/\bbg^{\circ}\simeq \Gamma/\Gamma^{\circ}$. Let \{$\gamma_i\}$ be a set of coset representatives of $\bbg^{\circ}$ in $\bbg$ chosen from $\Gamma$. Hence 
\[
Z(f)= \bigsqcup \gamma_i (\bbg^o\cap Z(L_{\gamma_i}(f))).
\] 
Let $\iota: \bbq[\bbg]\rightarrow\bbq[\bbg^{\circ}]$ be the homomorphism induced by the restriction map. Then by the assumption on the dimension of $Z(f)$, $\iota(L_{\gamma_i}(f))$ are non-zero, and clearly for any choice of finite sets of prime numbers $S_i$'s and positive integer numbers $r_i$'s, 
\[
\bigsqcup \gamma_i\Gamma^{\circ}_{S_i,r_i}(\iota(L_{\gamma_i}(f)))\subseteq \Gamma_{\cup S_i,\max r_i}(f),
\]
finishing the proof of the lemma.
\end{proof}
\noindent
From this point on we will assume that $\bbg$ is Zariski-connected. Thus all of its derived subgroups are also connected. Let us recall that derived subgroups are defined inductively, $\bbg^{(0)}=\bbg,$ and $\bbg^{(i+1)}=[\bbg^{(i)},\bbg^{(i)}]$. 
\begin{lem}~\label{l:structure}
{\rm (1)} Let $\bbg$ be Levi-semisimple. Then $\bbg^{(i)}$ is also Levi-semisimple,  $\bbg/\bbg^{(i)}$ is unipotent, and $\bbg$ is homeomorphic to $\bbg/\bbg^{(i)}\times\bbg^{(i)}$, as a $\bbq$-variety. In particular, if $\bbg$ is solvable and Levi-semisimple, then it is unipotent.\\
{\rm (2)}  $\Gamma^{(i)}$ is Zariski dense in $\bbg^{(i)}$, for any $i$. 
\end{lem}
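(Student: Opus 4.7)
The plan for (1) is to exploit the Levi decomposition $\bbg \simeq \bbg_{ss} \ltimes R_u(\bbg)$ guaranteed by the Levi-semisimple hypothesis. Since $\bbg_{ss}$ is semisimple in characteristic zero it is perfect, so $\bbg_{ss} \subseteq \bbg^{(1)}$. Projecting a commutator $[(s_1,u_1),(s_2,u_2)]$ onto $\bbg_{ss}$ yields $[s_1,s_2]$, so the projection $\bbg \to \bbg_{ss}$ restricts to a surjection $\bbg^{(1)} \to \bbg_{ss}$; its kernel $\bbu_1 = \bbg^{(1)} \cap R_u(\bbg)$ is a closed $\bbq$-subgroup of $R_u(\bbg)$ (closed because $[\bbg,\bbg]$ is closed in an algebraic group) and is $\bbg_{ss}$-invariant (as both $\bbg^{(1)}$ and $R_u(\bbg)$ are normal in $\bbg$). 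Hence $\bbg^{(1)} = \bbg_{ss} \ltimes \bbu_1$, and iterating gives $\bbg^{(i)} = \bbg_{ss} \ltimes \bbu_i$ for a decreasing chain of $\bbg_{ss}$-invariant closed unipotent $\bbq$-subgroups $\bbu_i$. In particular each $\bbg^{(i)}$ is Levi-semisimple with the same Levi factor, and $\bbg/\bbg^{(i)} \simeq R_u(\bbg)/\bbu_i$ is unipotent.

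For the variety decomposition, since $\bbg/\bbg^{(i)}$ is $\bbq$-unipotent it admits a $\bbq$-rational section $\bbg/\bbg^{(i)} \to \bbg$, the very fact already invoked in the introduction; trivializing the principal $\bbg^{(i)}$-bundle $\bbg \to \bbg/\bbg^{(i)}$ via this section produces the $\bbq$-variety isomorphism $\bbg \simeq \bbg/\bbg^{(i)} \times \bbg^{(i)}$. If in addition $\bbg$ is solvable, then $\bbg_{ss}$ is simultaneously semisimple and solvable, hence trivial, so $\bbg = R_u(\bbg)$ is unipotent.

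For (2), I proceed by induction on $i$, the case $i=0$ being the hypothesis. Suppose $\Gamma^{(i)}$ is Zariski dense in $\bbg^{(i)}$, which is Zariski connected because derived subgroups of Zariski-connected algebraic groups are Zariski connected. Set $\bbh := \Zcl([\Gamma^{(i)},\Gamma^{(i)}])$. Since $[\Gamma^{(i)},\Gamma^{(i)}]$ is normal in $\Gamma^{(i)}$, its Zariski closure $\bbh$ is normalized by $\Gamma^{(i)}$, hence by $\bbg^{(i)}$ (the normalizer of a closed subgroup is closed, and $\Gamma^{(i)}$ is Zariski dense). The image of $\Gamma^{(i)}$ in $\bbg^{(i)}/\bbh$ is abelian by construction; because abelianness is a Zariski-closed condition and this image is Zariski dense, $\bbg^{(i)}/\bbh$ itself is abelian, giving $\bbg^{(i+1)} = [\bbg^{(i)},\bbg^{(i)}] \subseteq \bbh$. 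The reverse containment is immediate since $[\bbg^{(i)},\bbg^{(i)}]$ is Zariski closed in the connected algebraic group $\bbg^{(i)}$. Therefore $\Gamma^{(i+1)}$ is Zariski dense in $\bbg^{(i+1)}$.

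The main obstacle I anticipate is of a structural rather than technical nature: carefully justifying that each derived subgroup really takes the semidirect form $\bbg_{ss} \ltimes \bbu_i$ as a $\bbq$-algebraic group (not merely as an abstract group generated by commutators), so that one can conclude the Levi-semisimplicity of $\bbg^{(i)}$ and the unipotence of $\bbg/\bbg^{(i)}$. This rests on the closedness of commutator subgroups of algebraic groups and on the normality of $R_u(\bbg)$; once this structural picture is in place, the variety decomposition and the density statement (2) are standard.
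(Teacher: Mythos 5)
Your proposal is correct and follows essentially the same route as the paper: use the Levi decomposition $\bbg \simeq \bbg_{ss} \ltimes R_u(\bbg)$ to show $\bbg^{(i)} = \bbg_{ss} \ltimes \bbu_i$, and then invoke the existence of a $\bbq$-rational section for the unipotent quotient (which is the content of Springer's Theorem 14.2.6 that the paper cites) to obtain the variety decomposition. The paper dispatches most of (1) and all of (2) with ``Therefore'' and ``the other parts are clear,'' and your argument supplies precisely the missing details---the surjectivity of the projection $\bbg^{(1)} \to \bbg_{ss}$ via perfectness of $\bbg_{ss}$, the identification of the kernel as $\bbg^{(1)} \cap R_u(\bbg)$, and the standard Zariski-density induction for (2).
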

\begin{proof} If  $\bbg$ is Levi-semisimple, $\bbg\simeq\bbg_{ss}\ltimes R_u(\bbg)$ as $\bbq$-groups. Therefore for any $i$, $\bbg^{(i)}=\bbg_{ss}\ltimes \bbu_i$, for some $\bbq$-subgroup $\bbu_i$ of $R_u(\bbg)$. Using ~\cite[Theorem 14.2.6]{Sp}, $R_u(\bbg)$ is homeomorphic to $R_u(\bbg)/\bbu_i\times \bbu_i$, as a $\bbq$-variety, and so $\bbg$ is homeomorphic to $\bbg/\bbg^{(i)}\times\bbg^{(i)}$ as a $\bbq$-variety. The other parts are clear.
\end{proof}
\noindent
Since $\bbg$ is connected, for some $i\le \dim \bbg$, $\bbg^{(i)}=\bbg^{(i+1)}$. Let us call 
\[
\bbh=\bbg^{(\dim\bbg)}
\]
 {\it the perfect core} of $\bbg$. Note that the perfect core might be trivial.
 \begin{proof}[Proof of Theorem~\ref{t:Main}.]
 By Lemma~\ref{l:structure}, $\bbq[\bbg]\simeq\bbq[\bbh]\otimes\bbq[\bbg/\bbh]$ and $\bbu=\bbg/\bbh$ is a unipotent $\bbq$-group.  Let $\pi:\bbg\rightarrow\bbg/\bbh$ be the projection map and  $\phi:\bbg/\bbh\rightarrow \bbg$ a $\bbq$-section. Hence $\phi\circ\pi$ is a $\bbq$-morphism from $\bbg$ to itself. Thus there is a finite set $S\rq{}$ of primes such that
 \[
 \phi\circ\pi(\Gamma)\subseteq \gcal(\bbz_{S\rq{}}).
 \]
 Let $\mathcal{H}$ be the Zariski-closure of $\Gamma^{(\bbh)}=\Gamma\cap \bbh$ in $(\mathbb{SL}_N)_{\bbz_{S_0}}$. By Lemma~\ref{l:Zsfree}, we can find a finite set $S$ of primes and a $\bbz_S$-basis of $\bbz_{S_0}[\mathcal{H}]\otimes \bbz_{S}$. So $f$ is mapped to $\sum_{i=1}^m Q_i\otimes a_i$ for some $Q_i\in \bbq[\bbu]$, which means that for any $g\in\bbg$ we have
 \[
 f(g)=\sum_{i=1}^m Q_i(\pi(g))a_i(\phi(\pi(g))^{-1}g)=\sum_{i=1}^m Q_i(\pi(g))L_{\phi(\pi(g))}(a_i)(g)
 .\] 
 Let $P=\gcd_i(Q_i)$ and $P_i=Q_i/P$.  Applying Theorem~\ref{t:unipotent} to $\pi(\Gamma)$, $P$, and $P_i$'s, we can find a positive integer $r$, a finite set $S\rq{}\rq{}$ of prime numbers, and  a Zariski dense subset $X$ of $\pi(\Gamma)$ such that
\begin{enumerate}
\item$P(u)$ has at most  $r$ prime factors in the ring of $S'$-integers, for any $u\in X$.
\item$\Pi(\gcd(P_1(u),\ldots,P_m(u)))\subseteq S'$, for any $u\in X$.
\end{enumerate} 

\noindent 
By the definition, any $u\in X$ is equal to $\pi(\gamma_u)$ for some $\gamma_u\in \Gamma$. We can identify $\bbg$ with $\bbu\times \bbh$ as $\bbq$-varieties via 
 \[
 (u,h)\mapsto \phi(u)h\hspace{1cm}\&\hspace{1cm} g\mapsto (\pi(g),\phi(\pi(g))^{-1} g).
 \]
 For any $u\in X$ and $\gamma_{\bbh}\in \Gamma^{(\bbh)}$, we have that
 \begin{equation}\label{e:ProductStructure}
 \gamma_u\gamma_{\bbh}\mapsto (u, \phi(u)^{-1} \gamma_u\gamma_{\bbh}),
 \end{equation}
 and 
 \begin{equation}\label{e:f-value}
 f(\gamma_u\gamma_{\bbh})=\sum_{i=1}^m Q_i(u) L_{\gamma_u^{-1}\phi(u)}(a_i)(\gamma_{\bbh}).
 \end{equation}
On the other hand, by the above properties and Theorem~\ref{t:Perfect2}, there are a positive integer  $r'$ and a finite set $S\rq{}\rq{}\rq{}$ of prime numbers such that, for any $u\in X$, 
\[
Y_{u}=\Gamma^{(\bbh)}_{r',S\rq{}\rq{}\rq{}}\left(P(u)\sum_{i=1}^m P_i(u) L_{\gamma_u^{-1}\phi(u)}(a_i)\right)=\Gamma^{(\bbh)}_{r',S\rq{}\rq{}\rq{}}\left(\sum_{i=1}^m Q_i(u) L_{\gamma_u^{-1}\phi(u)}(a_i)\right)
\]
 is Zariski dense in $\bbh$.  Therefore $ {\phi(u)^{-1}\gamma_u}Y_{u}$ is also Zariski dense in $\bbh$, for any $u\in X$, as $\gamma_u^{-1}\phi(u)\in\bbh$. Thus  
  \[
 \tilde{X}=\bigsqcup_{u \in X} \{u\}\times {\phi(u)^{-1}\gamma_u}Y_{u}
 \]
 is Zariski dense in $\bbu\times\bbh$, and, by Equations~(\ref{e:ProductStructure}) and (\ref{e:f-value}), we are done. 
 \end{proof}
%%%%%%%%%%%%%%%%%%% effective %%%%%%%%%%%%%%%%%%%%%%%%
\section{Effectiveness of our arguments.}
In order to avoid adding unnecessary complications, we did not discuss the effectiveness of our 
argument in the course of the paper. In this section, we address four issues from which one can 
easily verify that our arguments are effective.  
\begin{enumerate}
\item Let $\Gamma$ be the group generated by a finite subset $S$ of $\GL_n(\bbq)$. Let $\bbg$ 
be the Zariski-closure of $\Gamma$ and assume that $\bbg$ is Zariski-connected. Then in the 
course of our arguments (e.g. proof of Proposition~\ref{p:Geometric}), we need to be able to 
compute a presentation for $\bbg$, i.e. compute a finite subset $F$ of $\bbq[\mathbb{GL}_n]$ such 
that $\bbq[\bbg]\simeq \bbq[\mathbb{GL}_n]/\langle F\rangle$. 
\item Computing the irreducible components of a given affine variety and an effective version of 
N\"{o}ether-Bertini theorem is needed in the proof of Proposition~\ref{p:PointsQIrreducible}.
\item The spectral gap of the discrete Laplacian on the Cayley graphs of $\pi_q(\Gamma)$ with respect to $\pi_q(\Omega)$, where $\Omega$ is a symmetric finite generating set of $\Gamma$, if the connected component of the Zariski-closure of $\Gamma$ is perfect.
\item An effective version of Nori's strong approximation theorem is needed in various parts of 
this article, e.g. in understanding the density of the sieve. We need a more or less equivalent 
formulation. To be precise, we need to say that $\pi_q(\Gamma)=\prod_{p|q}\bbg(\f_p)$ if 
$\Gamma$ is a Zariski-dense subgroup of $\bbg$ and $\bbg$ is generated by its $\bbq$-unipotent 
subgroups. 
\end{enumerate}
The first three items are dealt with in~\cite{SV}. \cite[Lemma 62]{SV} gives us the first 
item. In order to get the second item, first we use \cite[Chapter 8.5]{BW}, to compute the primary 
decomposition of the defining ideal of the variety, which gives us the irreducible components. Then 
we use \cite[Theorem 40]{SV} to get an effective version of N\"{o}ether-Bertini theorem. In fact, 
\cite[Theorem 40]{SV} proves an effective version of \cite[Theorem 9.7.7 (i) and Theorem 12.2.4 (iii)]{EGA} which is a generalization of N\"{o}ether-Bertini theorem. The third item is the main result of \cite{SV} and they also show that their result is effective. 

Let $\Gamma$ be the group generated by a finite subset $\Omega$ of $\SL_n(\bbz_{S_0})$. Let 
 $\gcal$ be the Zariski-closure of $\Gamma$ in $(\mathbb{SL}_n)_{\bbz_{S_0}}$ and
 $\gcal_p=\gcal\times \Spec(\f_p)$.

\begin{thm}[Effective version of Theorem 5.4 in \cite{Nor}]\label{t:NoriEffective}
In the above setting, assume that the generic fiber $\bbg$ of $\gcal$ is generated by 
$\bbq$-unipotent subgroups. Then there is a recursively defined function $f$ from finite subsets of 
$\SL_n(\bbq)$ to positive integers such that for any square-free integer $q$ with prime factor 
at least $f(\Omega)$, we have 
\[
 \pi_q(\Gamma) = \prod_{p|q}\gcal_p(\f_p),
\]
where $\Gamma=\langle \Omega\rangle$ and $p$ runs through the prime divisors of $q$.
\end{thm}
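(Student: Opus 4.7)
The plan is to track effective bounds through Nori's proof of strong approximation at a single prime, then pass to square-free moduli via an effective application of Goursat's lemma. First, using \cite[Lemma 62]{SV} one computes defining equations for $\gcal$, and by enumerating words in $\Omega \cup \Omega^{-1}$ combined with an algorithmic Zariski-closure test one extracts finitely many unipotent elements $u_1, \ldots, u_k \in \Gamma$ whose closures $\bbu_i = \overline{\langle u_i\rangle}^{\Zcl}$ are one-parameter $\bbq$-unipotent subgroups of $\bbg$ that jointly generate $\bbg$; such $u_i$ exist because $\bbg$ is by hypothesis generated by its $\bbq$-unipotent subgroups and $\Gamma$ is Zariski-dense. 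One then computes a bound $f_1(\Omega)$ beyond which $\gcal \times \Spec(\bbz_p)$ has smooth, geometrically irreducible fibers of the correct dimension, each $\pi_p(u_i)$ has order exactly $p$, and $\langle \pi_p(u_i)\rangle$ coincides with the reduction of $\bbu_i$ evaluated on $\f_p$.

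For $p \geq f_1(\Omega)$ it follows that $\pi_p(\Gamma)$ contains the subgroup $\gcal_p(\f_p)^+$ generated by the $\f_p$-points of all unipotent subgroups of $\gcal_p$. An effective tracing of the constants in \cite[Sections 3--5]{Nor} produces a further computable bound $f_2(\Omega)$ beyond which $\gcal_p(\f_p)^+ = \gcal_p(\f_p)$; the inputs are the torsion primes of the root datum of $\bbg_{ss}$, the exponent of the center of its simply connected cover, the primes at which the truncated exponential fails to be polynomial of controlled degree, and the coefficients in the explicit presentation of $\gcal$, all of which are recursively computable. Combined with the trivial inclusion in the opposite direction, this yields $\pi_p(\Gamma) = \gcal_p(\f_p)$ for every prime $p \geq \max(f_1(\Omega), f_2(\Omega))$.

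For a square-free $q = p_1 \cdots p_r$ with each $p_i$ above this bound, iterate Goursat's lemma applied to $\pi_q(\Gamma) \subseteq \prod_i \gcal_{p_i}(\f_{p_i})$. Surjectivity onto each factor is already known from the previous step, and failure of surjectivity onto the product would force a nontrivial common quotient between some $\gcal_{p_i}(\f_{p_i})$ and the product $\prod_{j \neq i} \gcal_{p_j}(\f_{p_j})$. Modulo a bounded-exponent abelian part, the composition factors of $\gcal_p(\f_p)$ are quasi-simple Lie-type groups over $\f_p$ whose orders are divisible by $p$, so no such common quotient can exist once the primes exceed a computable bound $f_3(\Omega)$ absorbing the abelian obstruction. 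Setting $f(\Omega) := \max(f_1(\Omega), f_2(\Omega), f_3(\Omega))$ gives the theorem. The principal obstacle is the effective Nori step: quantifying Nori's argument requires explicit enumeration of every prime he excludes (torsion primes of the root system, primes dividing isogeny degrees, primes at which $\exp$ and $\log$ fail to be polynomially well-defined of uniform degree), each of which is a finite set computable from the presentation of $\bbg$, producing $f_2$ as a recursive, though not closed-form, function of $\Omega$.
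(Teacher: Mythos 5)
Your overall architecture (settle a single prime effectively, then pass to square-free moduli via a Goursat-type argument exploiting distinctness of composition factors across primes) matches the paper's, and your handling of the square-free step is essentially the paper's Lemma~\ref{l:FiniteProduct}. But the single-prime step has a genuine gap. You assert that because $\bbg$ is generated by its $\bbq$-unipotent subgroups and $\Gamma$ is Zariski-dense, one can \emph{find unipotent elements inside $\Gamma$} whose one-parameter closures generate $\bbg$. This is false in general: a Zariski-dense finitely generated subgroup of a group generated by unipotents need not contain any nontrivial unipotent element (e.g.\ a Zariski-dense free subgroup of $\SL_2(\bbq)$ generated by two generic semisimple matrices). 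In Nori's actual argument the unipotents live in the finite quotients $\pi_p(\Gamma)$, not in $\Gamma$, so the starting point of your enumeration-of-words procedure does not exist, and the claim that $\pi_p(\Gamma) \supseteq \gcal_p(\f_p)^+$ does not follow as stated.

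The paper avoids this by a different reduction. Set $\hcal = \dcal^{(\dim\bbg)}(\gcal)$, whose generic fiber $\bbh$ is the perfect core of $\bbg$, and note that $\dcal^{(\dim\bbg)}(\Gamma)$ is Zariski-dense in $\bbh$. Then one directly invokes the already-effective strong approximation for perfect groups, \cite[Theorem 40]{SV}, to get a computable $p_0$ with $\pi_p(\dcal^{(\dim\bbg)}(\Gamma)) = \hcal_p(\f_p)$ for $p > p_0$. The unipotent quotient $\bbu = \bbg/\bbh$ is handled separately and elementarily via logarithm and exponential (Malcev-style), producing $\pi_p(\phi(\Gamma)) = \ucal_p(\f_p)$ for large $p$; combining gives $\pi_p(\Gamma) = \gcal_p(\f_p)$ at a single prime. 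Your alternative of re-tracing \cite[Sections 3--5]{Nor} with explicit constants is where all the work lies and is precisely what \cite[Theorem 40]{SV} already accomplishes for the perfect part; the paper leans on that rather than redoing it, and the perfect-core/unipotent split is what makes the reduction clean. If you want to keep your route you would need to replace the ``find unipotents in $\Gamma$'' step by Nori's actual mechanism in the finite quotients and quantify it, which amounts to reproving \cite[Theorem 40]{SV}.
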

\begin{remark}\label{r:NoriEffective}
\begin{enumerate}
\item
In \cite{Nor}, it is said that (the non-effective version of) Theorem~\ref{t:NoriEffective} can be deduced 
from \cite[Theorems A, B and C]{Nor} and the reader is referred to an unpublished manuscript. As 
we need the effective version of this result, we decided to write down a proof of this statement.

\item
In the appendix of~\cite{SV}, the effective versions of \cite[Theorem A, B and C]{Nor} are given. Furthermore \cite[Theorem 40]{SV} provides an effective version of \cite[Theorem 5.1]{Nor}, when
$\bbg$ is perfect.
\end{enumerate}
\end{remark}
\begin{lem}\label{l:FiniteProduct}
Let $\{G_i\}_{i\in I}$ be a finite collection of finite groups such that $G_i$ and $G_{i'}$ do not have 
a (non-trivial) common homomorphic image for $i\neq i'$. Let $H$ be a subgroup of 
$G=\prod_i G_i$. If the projection $\pi_i(H)$ of $H$ to $G_i$ is onto, i.e. $\pi_i(H)=G_i$,
 then $H=G$.
\end{lem}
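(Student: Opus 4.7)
My plan is to induct on $|I|$, using Goursat's lemma for the two-factor case and reducing the general case to it.

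For the base case $|I|=2$, write $H\subseteq G_1\times G_2$ with both projections surjective. By Goursat's lemma, $H$ has the form $\{(g_1,g_2):\phi(g_1 N_1)=g_2 N_2\}$ for normal subgroups $N_i\lhd G_i$ and an isomorphism $\phi:G_1/N_1\xrightarrow{\sim} G_2/N_2$. Thus $G_1/N_1\cong G_2/N_2$ is a common homomorphic image of $G_1$ and $G_2$, which by hypothesis must be trivial, forcing $N_i=G_i$ and hence $H=G_1\times G_2$.

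For the inductive step with $|I|\geq 3$, pick any $i_0\in I$, set $G'=\prod_{i\neq i_0}G_i$, and let $H'$ be the image of $H$ under the projection $G\to G'$. Every projection $\pi_i(H')=\pi_i(H)=G_i$ for $i\neq i_0$ is surjective, and the collection $\{G_i\}_{i\neq i_0}$ still satisfies the pairwise no-common-quotient hypothesis, so by induction $H'=G'$. Now view $H$ as a subgroup of $G'\times G_{i_0}$ that surjects onto each factor, and apply Goursat again to obtain $G'/K\cong G_{i_0}/L$ for some normal subgroups $K\lhd G'$ and $L\lhd G_{i_0}$. It suffices to show this common quotient $Q:=G'/K$ is trivial.

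The main step is this last point, and I expect it to be the only nontrivial part. Suppose $Q$ were nontrivial; then $Q$ would admit a simple quotient $S$, which would be a simple quotient of both $G_{i_0}$ and $G'=\prod_{i\neq i_0}G_i$. Under the surjection $G'\twoheadrightarrow S$, the image of each factor $G_i$ (which is normal in $G'$) is a normal subgroup of $S$, hence either trivial or all of $S$. Since the images of the factors generate $S$, at least one factor $G_j$ (with $j\neq i_0$) maps onto $S$. But then $S$ is a nontrivial common homomorphic image of $G_j$ and $G_{i_0}$, contradicting the hypothesis. Thus $Q$ is trivial, $L=G_{i_0}$, and $H=G'\times G_{i_0}=G$, completing the induction.
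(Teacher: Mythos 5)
Your proof is correct. It shares the same skeleton as the paper's argument (project $H$ off a factor $i_0$, invoke induction to get surjectivity onto $\prod_{i\ne i_0}G_i$, and extract a nontrivial common quotient of two factors to contradict the hypothesis), but the details differ in two places. First, the paper inducts on $|G|$ and begins with a preliminary reduction to the case $H\cap G_i=1$ (quotienting by $\prod_i H_i$), whereas you induct on $|I|$ and let Goursat's lemma absorb that reduction automatically. Second, and more substantively, at the final step the paper applies the induction hypothesis a \emph{second} time -- to the Goursat kernel $N\lhd\prod_{i\ne i_0}G_i$ -- to locate an index $i_1$ with $\pi_{i_1}(N)\ne G_{i_1}$, and then exhibits $G_{i_1}/\pi_{i_1}(N)$ as the common quotient of $G_{i_0}$ and $G_{i_1}$; you instead pass to a simple quotient $S$ of the Goursat quotient $Q$ and use the fact that the normal images of the factors $G_j$ in $S$ must be trivial or all of $S$. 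Your simple-quotient step is a little slicker in that it avoids the second appeal to induction, while the paper's version is more self-contained (no appeal to the existence of simple quotients); both are valid.
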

\begin{proof}
We proceed by induction on the order of $G$. Let $H_i=G_i\cap H$. Since $\pi_i(H)=G_i$, 
$H_i$ is a normal subgroup of $G_i$. Let $G'_i:=G_i/H_i$, $H':=H/\prod_i H_i$ and $G':=\prod_i G_i/H_i$. It is clear that $\pi_i(H')=G'_i$ for any $i$. If $H_i\neq 1$ for some $i$, then $|G|>|G'|$. 
Hence by the induction hypothesis we have that $H'=G'$, which implies that $H=G$ and we are 
done. So without loss of generality we can assume that 
\begin{equation}\label{e:intersection}
H\cap G_i=1,
\end{equation}
for any $i$. For a fixed $i_0$, let $H'_{i_0}$ be the projection of $H$ to $\prod_{i\neq i_0} G_i$. It 
is clear that $\pi_i(H'_{i_0})=G_i$ for any $i\neq i_0$. Hence by the induction hypothesis we have that
\begin{equation}\label{e:projection}
\pi_{I\setminus \{i_0\}}(H)=\prod_{i\neq i_0} G_i,
\end{equation}
where $\pi_{I\setminus \{i_0\}}$ is the projection to $\prod_{i\neq i_0}G_i$. By 
Equations~(\ref{e:intersection}) and (\ref{e:projection}), we have that $G_{i_0}$ is a homomorphic 
image of $\prod_{i\neq i_0} G_i$. Let $N$ be the normal subgroup of $\prod_{i\neq i_0} G_i$ such 
that $(\prod_{i\neq i_0} G_i)/N\simeq G_{i_0}$. Then again by the induction hypothesis there is some
$i_1\neq i_0$ such that $\pi_{i_1}(N)\neq G_{i_1}$. Thus $G_{i_0}$ and $G_{i_1}$ have a (non-trivial) common homomorphic image, which is a contradiction.
\end{proof}
\begin{proof}[Proof of Theorem~\ref{t:NoriEffective}]
Let $\hcal=\dcal^{(\dim \bbg)}(\gcal)$. Then the generic fiber $\bbh$ of $\hcal$ is the perfect core 
of $\bbg$ and $\dcal^{(\dim \bbg)}(\Gamma)\subseteq \Gamma \cap \hcal(\bbz_{S_0})$ is 
Zariski-dense in $\hcal$. Since $\bbh$ is Zariski-connected, by \cite[Lemma 62]{SV}, we can find 
a finitely generated subgroup of $\dcal^{(\dim \bbg)}(\Gamma)$ which is Zariski-dense in $\bbh$. 
Hence by \cite[Theorem 40]{SV} we can compute $p_0$ such that for any $p>p_0$ we have
\begin{equation}\label{e:PerfectCore}
\pi_p(\dcal^{(\dim \bbg)}(\Gamma))=\hcal_p(\f_p),
\end{equation}
where $\hcal_p=\hcal\times_{\Spec(\bbz_{S_0})} \Spec(\f_p)$. On the other hand, 
$\bbu=\bbg/\bbh$ is a unipotent $\bbq$-group and the image $\phi(\Gamma)$ of $\Gamma$  to 
$\bbu$ is a Zariski-dense group. We can compute an embedding of $\bbu$ to 
$(\mathbb{GL}_m)_{\bbq}$ for some $m$. By enlarging $S_0$, if necessary, we can compute the 
Zariski-closure $\ucal$ of $\pi(\Gamma)$ in $(\mathbb{GL}_m)_{\bbz_{S_0}}$ and extend $\phi$ 
to a $\bbz_{S_0}$-homomorphism from $\hcal$ to $\ucal$. Using the logarithmic and exponential 
maps, we can effectively enlarge $p_0$, if necessary, to make sure that
\begin{equation}\label{e:UnipotentQuotient}
\pi_p(\phi(\Gamma))=\ucal_p(\f_p),
\end{equation}
where $\ucal_p=\ucal\times_{\Spec(\bbz_{S_0})}\Spec(\f_p)$ and $p>p_0$. Hence by 
(\ref{e:PerfectCore}) and (\ref{e:UnipotentQuotient}) we have that 
$\pi_p(\Gamma)=\gcal_p(\f_p)$, for any $p>p_0$. By \cite[Lemma 64]{SV} we completely 
understand the composition factors of $\gcal_p(\f_p)$. In particular, if $p$ and $p'$ are primes 
larger than 7, then $\gcal_p(\f_p)$ and $\gcal_{p'}(\f_{p'})$ do not have a (non-trivial) common 
homomorphic image. Thus Lemma~\ref{l:FiniteProduct} finishes the proof.
\end{proof}

%%%%%%%%%%%%%%%%%%%%%%Appendix %%%%%%%%%%%%%%%%%%%%
\appendix
\section{Heuristics.}
Finding primes or almost primes in very sparse sequences of integers is notoriously difficult and the most believable speculations are based on probabilistic reasoning. Such an argument for Fermat primes $F_n=2^{2^n}+1$ suggest that their number is finite \cite[Page 15]{HW}. Similar arguments have been carried out for other sequences such as Fibonacci numbers \cite{BLMS}. Here we pursue such probabilistic heuristics for tori. 

Let $\Gamma\subseteq \GL_n(\bbq)$ be a (finitely generated)  torus. That is to say it is conjugate to a group of diagonal matrices; there is $g\in \GL_n(K)$, $K$ a number field, such that
\begin{equation}\label{e:Diag}
g^{-1}\Gamma g\subseteq \mathbb{D}(K)=\{\diag(a_1,\ldots,a_n)|\h a_i\in K^{\times}\},
\end{equation}
where 
\[
\diag(a_1,\ldots,a_n)=\left[\begin{array}{ccc}a_1&&\\&\ddots&\\&&a_n\end{array} \right].
\]
The heuristics will show that there is an $f\in \bbq[\bba^{n^2}]$ for which not only does $(\Gamma, f)$ not saturate, but $\Gamma_{r,S}(f)$ is not finite for every $r$ and $S$. To this end, we can assume that $g^{-1}\Gamma g$ is a subgroup of the $S$-units of $K$ for some finite set of places (with $\vare \rightarrow \diag(\sigma_1(\vare),\ldots,\sigma_n(\vare))$ and $\sigma_i$'s the embeddings of $K$). For simplicity we assume that $S$ is empty and since the torsion is finite we can ignore it for our purposes. That is $\Gamma$ is a free abelian group of rank $t\ge 1$ with generators  $\gamma_1,\ldots,\gamma_{t}$ and $\Gamma\subseteq \GL_n(\bbz)$. For $x\in \GL_n(\bbq)$ let $F(x)={\rm Tr}(x^tx)\in \bbq[\bba^{n^2}]$ be its Hilbert-Schmidt norm. It is clear from discreteness property of the log of units map and (\ref{e:Diag}) that for ${\bf m}=(m_1,\ldots,m_t)$ in $\bbz^t$
\begin{equation}\label{e:NormIneq}
A_2^{|{\bf m}|}\ll F(\gamma_1^{m_1}\cdots\gamma_t^{m_t})\ll A_1^{|{\bf m}|},
\end{equation}
where $A_1>A_2>1$ and the implied constants are independent of ${\bf m}$. Fix $\nu>t$ and set 
\begin{equation}\label{e:ShiftProduct}
f(x):=f_1(x)f_2(x)\cdot \cdots \cdot f_{\nu}(x),
\end{equation}
where 
\[
 f_j(x)=F(x)+j,\hspace{1cm} j=1,2,\ldots, \nu. 
\]
The heuristic argument is that for each $m\in \bbz^t$, $F(\gamma_1^{m_1}\cdots\gamma_t^{m_t})$ is a ``random" integer in the range (\ref{e:NormIneq}) and that $f_j(x)$ for $j=1,2,\ldots,\nu$ are independent as far as the number of their prime factors. Actually there may be some small forced prime factors of $f$ but these will only enhance the reasoning below. Now let $r$ be a large integer and for ${\bf m}\in \bbz^t$ let $p({\bf m},r)$ be the probability that an integer in the range (\ref{e:NormIneq}) has at most $r$-prime factors. According to the prime number theorem
\begin{equation}\label{e:Probability}
p({\bf m},r)\ll \frac{[\log(|{\bf m}|+1)]^{r-1}}{|{\bf m}|+1}
\end{equation}
(again the implied constants being independent of ${\bf m}$).

Hence assuming that the number of prime factors of the $f_j$, $j=1,\ldots,\nu$ are independent and that these values are ``random" we see that $p_f({\bf m},r)$, the probability that $f(\gamma_1^{m_1}\cdots\gamma_t^{m_t})$ has at most $r$-prime factors satisfies:
\begin{equation}\label{e:Prob}
p_f({\bf m},r)\ll\frac{[\log(|{\bf m}|+1]^{\nu(r-1)}}{(|{\bf m}|+1)^{\nu}}.
\end{equation}
Hence since $\nu>t$
\begin{equation}\label{e:Conv}
\sum_{{\bf m}\in \bbz^t}p_f({\bf m},r)<\infty
\end{equation}
By the Borel-Cantelli Lemma it follows that the probability that there are infinitely many ${\bf m}$'s for which $f(\gamma_1^{m_1}\ldots\gamma_t^{m_t})$ has at most $r$-prime factors, is zero. That is for any $r$ we should expect that $\Gamma_r(f)$ is finite! 

For a general $\Gamma\subseteq \GL_n(\bbz)$ (or finitely generated in $\GL_n(\bbq)$) if $\bbg=\Zcl(\Gamma)$ is not Levi-semisimple, then there is an onto $\bbq$-homomorphism $\phi:\bbg^{\circ}\rightarrow\bbt$, where $\bbt$ is a non-trivial $\bbq$-torus. Hence $\Lambda=\phi(\Gamma)$ is a finitely generated subgroup of $\bbt(\bbq)$. Thus one can use the heuristics above to show there is an $f\in\bbq[\bbt]$ such that $\Lambda_r(f)$ is finite for any $r$. In particular, as in the proof of Theorem~\ref{t:Main}, $\Gamma_r(f)$ cannot be Zariski-dense in $\bbg$ for any $r$. The conclusion is that if we accept the probabilistic heuristics, then the condition that $\bbg$ be Levi-semisimple in Theorem \ref{t:main2} is necessary if we allow all $f$'s.

%%%%%%%%%%%%%%%%%%%%%%% Bib %%%%%%%%%%%%%%%%%%%%%%%%%%

\end{document}